\documentclass[11pt,twoside]{article}
\usepackage{latexsym}
\usepackage{amssymb,amsbsy,amsmath,amsfonts,amssymb,amscd,amsthm,enumerate}
\usepackage{epsfig,graphicx,mathrsfs}
\usepackage{color}
\usepackage{enumerate}
\usepackage{url}
\graphicspath{{figures/}}
\usepackage[colorlinks=true]{hyperref}
\setlength{\oddsidemargin}{-5mm}
\setlength{\evensidemargin}{-5mm}
\setlength{\topmargin}{-5mm}
\setlength{\textheight}{22cm}
\setlength{\textwidth}{17cm}

\parindent 10pt

\newcommand{\carat}{\chi}

\newcommand{\R}{\mathbb{R}}

\newcommand{\N}{\mathbb{N}}
\newcommand{\mG}{\mathcal{G}}
\newcommand{\mI}{\mathcal I}
\newcommand{\mJ}{\mathcal J}
\newcommand{\beq}{\begin{equation}}
\newcommand{\eeq}{\end{equation}}
\def\a{\alpha}

\def\s{\sigma}

\def\O{\Omega}

\def\gS{\Sigma}

\def\t{\tau}

\def\pd{D}

\newcommand{\cD}{{\cal D}}
\newcommand{\cE}{{\cal E}}

\newcommand{\cI}{{\cal I}}

\newcommand{\cN}{{\cal N}}

\newcommand{\cP}{{\cal P}}

\newcommand{\cV}{{\cal V}}
\newcommand{\cNint}{{\overset{\circ}{\cal N}}}
\newcommand{\f}{\frac}

\newcommand{\supp}{{\rm supp}}
\newcommand{\Inc}{{\rm Inc}}
\newcommand{\ce}{{\overline e}}
\newtheorem{theorem}{Theorem}[section]
\newtheorem{lemma}[theorem]{Lemma}
\newtheorem{definition}[theorem]{Definition}
\newtheorem{proposition}[theorem]{Proposition}
\newtheorem{corollary}[theorem]{Corollary}
\newtheoremstyle{remarkb}
	{}
	{}
	{\normalfont}
	{}
	{\bfseries}
	{}
	{ }
	{}
\theoremstyle{remarkb}
\newtheorem{remark}[theorem]{Remark}
\newenvironment{Proofc}[1]{\smallskip\par\noindent\textsc{#1}\quad}%
{\hfill$\Box$\bigskip\par}

\numberwithin{equation}{section}
\newenvironment{acknowledgment}{\noindent{\bf Acknowledgment}}{}
%%%%%%%%%%%%%%%%%%%%%%%%%%%%%%%%%%%%%%%%%%%%%%%%%%%%%%%%%%%%%%%%
\begin{document}
\title{\Large \bf A differential model for growing sandpiles on networks}
\author{Simone Cacace\footnotemark[1] \and Fabio Camilli\footnotemark[2] \and Lucilla Corrias\footnotemark[3] }
\maketitle
\begin{abstract}
We consider a system of differential equations of Monge-Kantorovich type  which describes the equilibrium
configurations of granular material  poured by a constant source on a network.
Relying on the definition  of viscosity solution  for Hamilton-Jacobi equations   on networks introduced in  \cite{ls},
we  prove existence and uniqueness of the solution of the system
and we   discuss its numerical approximation.  Some numerical  experiments are carried out.
\end{abstract}
\begin{description}
    \item [\textbf{ AMS subject classification}:] 35R02, 35C15, 47J20, 49L25, 35M33.
    \item [\textbf{Keywords}:] networks, granular matter, Monge-Kantorovich system, viscosity solutions.
\end{description}
%

%%%%%%%%%%%%%%%% Affiliations %%%%%%%%%%%%%%%%%%%
% Simone
\footnotetext[1]{ Dip. di Matematica e Fisica, Universit{\`a}  degli Studi Roma Tre, L.go S. Leonardo Murialdo 1, 00146 Roma, Italy,
 ({\tt e-mail:cacace@mat.uniroma3.it})
}
% Fabio
\footnotetext[2]{Dip. di Scienze di Base e Applicate per l'Ingegneria,  ``Sapienza'' Universit{\`a}  di Roma, via Scarpa 16,
00161 Roma, Italy, ({\tt e-mail:camilli@sbai.uniroma1.it})}
% Lucilla
\footnotetext[3]{LaMME - UMR 807, Universit\'e d'Evry Val d'Essonne,   23 Bd. de France, F--91037 Evry Cedex, France
({\tt e-mail:lucilla.corrias@univ-evry.fr})}
\pagestyle{plain}
\pagenumbering{arabic}
%%%%%%%%%%%%%%%%%%%%%%%%%
%   Introduction       %
%%%%%%%%%%%%%%%%%%%%%%%%
\section{Introduction}
In this paper we shall analyze the  differential system
\begin{equation}\label{I1}
\begin{array}{l}
-\pd\left(v(x)\, \,\pd u(x)\right)=f(x), \\[6pt]
\ (|\pd u(x)|-1)v(x)=0 ,\\[6pt]
\ |Du(x)|\le1,
\end{array}
\end{equation}
on a network $\cN$, i.e. a collection of vertices joined by non self-intersecting edges, and we shall provide a characterization of the solution $(u,v)$.

System \eqref{I1}, when considered on a bounded domain $\Omega$ of $\R^n$, arises in several different frameworks. For example, it characterizes   optimal plans in the mass transfer problem, and it is also related to the behavior of the solution of the $p$-Laplace equation as $p\to \infty$. Another illustrating example for system \eqref{I1} is  a mathematical model for granular matter. The deposition of homogeneous granular matter such as sand, when being poured onto objects from sources above, is  well understood   (\cite{bcre,hk,p}). In  this framework system \eqref{I1} is related to equilibrium configurations which occur   on flat and bounded domains without sides (such as tables). In these configurations  the granular matter can only form heaps  with local steepness not exceeding the angle of repose $\a$ characteristic of the matter. Consequently, if we denote  by  $u$   the height  of the standing layer   at $x$ and normalize the angle $\a$ in such a way that $\tan (\a)=1$, then $u$ must satisfy the equation
\begin{equation}\label{I2}
|\pd u|\le 1
\end{equation}
inside the domain and   vanish  on the boundary. A selection criterium among all the possible admissible  configurations is given by the maximal volume solution of \eqref{I2}. This extremal solution  coincides with the distance function from the boundary  and it is characterized as the unique viscosity solution of \eqref{I2} satisfying $u=0$ at the boundary. Eventually matter poured by the source on the standing layer $u$ would roll   at a speed proportional to the slope $Du$. Denoted  with $v$ the height of the rolling layer, since the matter is conserved inside the domain,  it follows  that $v$  satisfies a conservation law  given by first equation in \eqref{I1}. The  analysis of \eqref{I1} performed in the one-dimensional case in \cite{hk} has been extended to bi-dimensional domains in \cite{cc} (see also \cite{ccs,cm,cm2}) where  existence, uniqueness and representation formula for  the solution of  system \eqref{I1}  completed with the appropriate boundary conditions
are proved.

In this paper, we are interested in the case where the matter is poured on a network. More precisely, we suppose that the edges of the network  are bounded on both sides by sufficiently high walls and that the sand  can run out of the network only  at the boundary vertices, while at the other vertices is interchanged between the incident edges. Pouring sand in a network, several sand heaps will start to grow, each two of them separated by at least one boundary vertex. At the equilibrium  any additional sand portion which violates the angle of repose is forced to leave the network  at the boundary vertices. The profile of the standing layer $u$ is then described by a continuous function on the network, which vanishes at the boundary points, maximizes the volume functional, and satisfies the eikonal equation almost everywhere in the edges.  As in the case of a bounded domain $\Omega$, the distance function from the boundary satisfies all these properties and it can be characterized as the viscosity solution of the corresponding eikonal equation with vanishing boundary condition. Assuming  that inside the edges  the matter is conserved, the height of the rolling layer $v$ satisfies a conservation law as in \eqref{I1}. Hence  we have  the following system on the network
\begin{equation}\label{I3}
\begin{array}{ll}
-\pd\left(v(x)\eta(x)\, \,\pd u(x)\right)=f(x), \\[4pt]
(\eta(x)|\pd u(x)|-1)v(x)=0\,,\\[4pt]
\eta(x)|\pd u(x)|\le1
\end{array}
\end{equation}
where $f$ is the source term and $\eta$ represents a non constant angle of repose.
It is important to observe that    the notion of viscosity solution for  the Hamilton-Jacobi  equations     involves  also   the internal vertices (see Definition \ref{def_visco}). Hence  the eikonal equation in \eqref{I3}  has   to be completed only   with a condition at the boundary vertices, i.e. $u=0$ on $\partial \cN$. Instead, we need to add to the conservation law in \eqref{I3} transmission conditions at the internal vertices, expressing the conservation of the total matter interchanged among the edges incident a same vertex (see \eqref{TC}).

A  preliminary and fundamental step for the analysis of \eqref{I3} requires the  study of the regularity  of the distance function from the network boundary  and the  characterization of its  singular set. Then, for the system \eqref{I3} completed with the mentioned appropriate boundary and transmissions conditions, we prove existence through a representation formula, and uniqueness (to be intended for $v$ on the whole network and for $u$ on the set $\{ v\neq0\}$, exactly as in the case of a bounded domain $\Omega$). 

The paper is organized as follows. In Section \ref{S_2} we introduce some basic notations  and definitions. Section  \ref{S_3} is devoted
to the study of  the eikonal equation on the network. In Section \ref{S_4} we prove existence of a solution of \eqref{I3} on the network through its representation formula, while in Section~\ref{S_5} we give the uniqueness result. A finite-difference approximation scheme for the problem and some numerical examples illustrating the theory are described in Sections \ref{S_6} and \ref{sec:test} respectively.
%%%%%%%%%%%%%%%%%%%%%%%%%
%   Preliminary        %
%%%%%%%%%%%%%%%%%%%%%%%%
\section{Preliminary  definitions and notations}\label{S_2}
This section is devoted to the definitions and notations that we shall use in the sequel. These definitions are nowadays classical but not necessarily standard in the literature and we recall them for the readers convenience.
A network  $\cN$ is a finite  collection of $N$ distinct  points $x_i$ in $\R^n$, also denoted {\sl vertices} or {\sl nodes}, and $M$ non self-intersecting distinct curves $e_j$ in $\R^n$, also denoted {\sl edges} or {\sl arcs}, whose endpoints are vertices of~$\cN$. Setting  $\cV:=\{x_i\}_{i\in \mI}$, $\mI=\{1,\dots,N\}$, and $\cE:=\{e_j\}_{j\in \mJ}$, $\mJ=\{1,\dots,M\}$, we have $\cN=(\cV,\cE)$. We also assume that $\cN$ is connected and that there are no loops.

To each $e_j\in \cE$ is associated a diffeomorphism $\pi_j:[0,\ell_j]\to\R^n$, $\ell_j\in(0,\infty)$, such that
\[
\pi_j(0),\pi_j(\ell_j)\in\cV\,,\qquad e_j=\pi_j((0,\ell_j))\qquad\hbox{and}\qquad \overline e_j=\pi_j([0,\ell_j])\,.
\]
For $x_i\in \cV$, $\Inc_i:=\{j\in \mJ :x_i\in\overline e_j \}$ is the set of all the indices of the edges having an endpoint at~$x_i$, and when $j,k\in\Inc_i$, we shall say that $e_j$ and $e_k$ are incidents. We define the boundary  of the network as a subset $\partial \cN:=\{x_i\}_{i\in\mI_B}$ of $\cV$, the \emph{boundary vertices}, for a given nonempty set $\mI_B\subset\mI$, and we set $\cNint:=\cN\setminus\partial \cN$. We always assume $x_i\in\partial\cN$ whenever $\#\Inc_i=1$. The complement set $\{x_i\}_{i\in \mI_T}$, $\mI_T:=\mI\setminus \mI_B$ , shall be called the set of \emph{transition vertices}.

The $\pi_j$'s allows us to endow the network with the following natural metric (see \cite{bk,im}). Given $x,y\in\cN$, let $\cP(x,y)$ denote a {\sl path} connecting them along $\cN$, i.e. a finite sequence of closed edges and sub-edges  $(\bar e_{j_1}\cap\bar e(x),\bar e_{j_2},\dots,\bar e_{j_n},\bar e_{j_{n+1}}\cap\bar e(y))$ such that $x\in\bar e_{j_1}$, $y\in\bar e_{j_{n+1}}$, $\bar e_{j_i}\cap\bar e_{j_{i+1}}=\{x_i\}\subset\cV$, $i=1,\dots,n$, and $e(x)$ and $e(y)$ are the sub-edges with endpoints $x$, $x_1$ and $y$, $x_n$ respectively. Then,
\beq\label{metric}
\text{Dist}(x,y):=\inf_{\cP(x,y)}\{|\pi_{j_1}^{-1}(x)-\pi_{j_1}^{-1}(x_1)|+\sum_{i=2}^n\ell_{j_i}+|\pi_{j_{n+1}}^{-1}(y)-\pi_{j_{n+1}}^{-1}(x_n)|\}\,.
\eeq
The distance \eqref{metric} makes $\cN$ a compact metric network.

To each function $u$ defined on $\cN$ and each function $u=(u_{\bar e_j})_{j\in\mJ}$ defined on $\prod_{j=1}^M\bar e_j$, with $u_{\bar e_j}$ defined on $\bar e_j$, we associate the projection $(u_j)_{j\in\mJ}$ defined  on the parameters' space as
\beq\label{defprojections}
u_j(t):=u(\pi_j(t))\quad\text{and}\quad u_j(t):=u_{\bar e_j}(\pi_j(t))\,,\qquad t\in[0,\ell_j]\,,\quad j\in\mJ\,.
\eeq
\eqref{defprojections} allows us to make no difference between $u:\cN\to\R$ and $u=(u_{\bar e_j})_{j\in\mJ}$ in the sequel. For the sake of simplicity, we shall also improperly write $u_j(x)$ whenever $x\in\bar e_j$, instead of $u_j(\pi_j^{-1}(x))$.

Next, the integral of $u$ on $\cN$ is naturally defined as
\[
\int_\cN u(x)dx=\sum_{j\in \mJ}\int_{e_j}u(x)\,dx=\sum_{j\in \mJ} \int_0^{\ell_j}u_j(t)dt\,,
\]
while the Lebesgue spaces $L^p(\cN)$, $p\in [1,\infty]$, are   defined as the direct product spaces $\prod_{j=1}^M L^p(0,\ell_j)$ endowed with the norm $\|u\|_{p}:=\sum_{j\in \mJ}\|u_j\|_{L^p(0,\ell_j)}$. The space $C(\cN)$ of continuous functions on $\cN$ is the space of $u$ such that $(u_j)_{j\in\mJ}\in \prod_{j=1}^MC([0,\ell_j])$ and $u_j(x_i)=u_k(x_i)$ for all $j,k\in\Inc_i$ and all $i\in\mI$.

Concerning derivatives, with $\pd u(x)$, $x\in\cN$,  we denote $(\pd_j u(x))_{j\in\mJ}$, where
\[
\pd_j u(x)=u'_j(\pi_j^{-1}(x))\,,\qquad\text{ if } x\in e_j\,,
\]
while, if $x=x_i\in\cV$ and $j\in\Inc_i$, $\pd_j u(x_i)$ is the internal oriented derivative of $u$ at $x_i$ along the arc $e_j$, i.e.
\beq\label{eq:orientedderivative}
\pd_j u (x_i)=\left\{
\begin{array}{ll}
\lim\limits_{h\to 0^+} (u_j(h)-u_j(0))/h\,, & \hbox{if } x_i=\pi_j(0) \\[4pt]
\lim\limits_{h\to 0^+}(u_j(\ell_j-h)-u_j(\ell_j))/h\,, & \hbox{if } x_i=\pi_j(\ell_j)\,.
\end{array}
\right.
\eeq
Then, the space  $C^1(\cN)$ consists of all the functions $u\in C(\cN)$ such that  $(u_j)_{j\in\mJ}\in\prod_{j=1}^M C^1([0,\ell_j])$ and it is endowed with the  norm $\|u\|_{C^1}=\max_{k=0,1}\|\pd^k u\|_{\infty}$. Observe that no continuity condition at the vertices is prescribed for the derivatives of a function $u\in C^1(\cN)$.

Finally, as for the Lebesgue spaces $L^p(\cN)$,  the Sobolev spaces $W^{1,p}(\cN)$, $p\in [1,\infty]$, is  the product space $\prod_{j=1}^M W^{1,p}(0,\ell_j)$ endowed with the norm $\|u\|_{1,p}:=\sum_{j\in \mJ}\|u_j\|_{W^{1,p}(0,\ell_j)}$. It is worth noticing that, with the above definition, $u\in W^{1,p}(\cN)$ implies $(u_j)_{j\in\mJ}\in \prod_{j=1}^MC([0,\ell_j])$ but not $u\in C(\cN)$. This is not the standard definition of the Sobolev space, but it is the convenient one for the problem we are concerned here (see \cite{bk}).

To conclude this preliminary section, we observe that the  diffeomorphisms $\pi_j$ induce necessarily an orientation on the edges $e_j$. However, all the results that will follow are independent on that orientation as well as on the $\pi_j$ themselves.  Indeed, changing the family $(\pi_j)_{j\in\mJ}$ leads simply to turn $(\cN, \text{Dist})$ into an isomorphic compact metric network.
%%%%%%%%%%%%%%%%%%%%%%%%%
%   Singular  set       %
%%%%%%%%%%%%%%%%%%%%%%%%
\section{A weighted distance on $\cN$ and its singular set}\label{S_3}
In view of the problem motivating the present analysis, it is natural to assume that the network is not homogeneous. Therefore, we introduce a measure of the capacity of each edge of the network to transport and allocate matter (i.e. a non constant angle of repose) through a function $\eta$ satisfying
\beq\label{hyp_eta}
(\eta_j)_{j\in\mJ}\in \prod_{j=1}^MC([0,\ell_j])\qquad\text{ and }\qquad\min\{\eta_j(x);\,x\in\bar e_j\,,j\in\mJ\}>0\,.
\eeq
Then, with the notations of the previous section, we define a new metric on $\cN$ taking into account the heterogeneity of the edges, as
\beq\label{dist}
\cD(x,y):=\inf_{\cP(x,y)}\left\{\big|\int_{\pi_{j_1}^{-1}(x)}^{t_1}\f1{\eta_{j_1}(s)}ds \big|\ +\sum_{i=2}^n \big|\int_{t_{i-1}}^{t_i}\f1{\eta_{j_i}(s)}ds\big| + \big|\int_{t_n}^{\pi_{j_{n+1}}^{-1}(y)}\f1{\eta_{j_{n+1}}(s)}ds\big|
\right\}
\eeq
where, for a given path $\cP(x,y)$, we have set $\pi_{j_i}^{-1}(x_{i-1})=t_{i-1}$ and $\pi_{j_i}^{-1}(x_i)=t_i$, for simplicity. Since the network is finite, the number of paths connecting $x$ to $y$ and composed of distinct edges is finite too. Therefore, the infimum in \eqref{dist} is finite and attained. With \eqref{dist}, we also define the usual distance function from the boundary $\partial\cN$
\begin{equation}\label{dist_bd}
d(x):=\min_{y\in\partial \cN}\,\cD(x,y)\,,\quad x\in\cN.
\end{equation}
We shall call all paths realizing $\cD(x,y)$ and $d(x)$, {\sl geodesic paths}. Obviously, $\cD(x,y)$ is equivalent to the metric \eqref{metric} induced by the $(\pi_j)_{j\in\mJ}$. However, the geodesic paths given by \eqref{dist} are not necessary the same ones given by \eqref{metric} (see the numerical tests in Section \ref{sec:test}).

The remaining of this section is devoted to prove that \eqref{dist_bd} is a viscosity solution of the eikonal equation
\beq\label{eik}
|\pd u(x)|-\f 1{\eta(x)}=0\,,\qquad   x\in \cNint\,,
\eeq
according to Definition \ref{def_visco} below.  There are several frameworks in which a viscosity solution theory for Hamilton-Jacobi equations on networks has been developed \cite{acct,im,sc}. Here we shall consider the theory recently introduced in \cite{ls}, since it allows us to deal with   non continuous hamiltonians, as the one in \eqref{eik}. This theory has been developed for a flat junction-type network, but extends easily to the network under consideration, yelding that a viscosity solution $u$ of \eqref{eik} is a viscosity solution in each edge $e_j$ and a  constrained supersolution at $x_i$, for all $i\in\cI_T$. We shall give also the definitions and the results necessary to obtain the regularity properties of $d$ we shall need in the sequel. All the proofs are postponed to the Appendix \ref{appendix} since they are quite classical.
\begin{definition}\label{def_visco} Given $u\in C(\cN)$,
\begin{itemize}
\item[(i)] $u$ is a (viscosity) subsolution of \eqref{eik} if for any $x\in e_j$, $j\in\mJ$, and for any test function $\phi\in C^1(\cN)$ for which $(u-\phi)$ attains a local maximum at $x$, we have
\beq\label{subint}
| \pd_j \phi (x )|-\f 1 {\eta_j(x )}\le 0\,;
\eeq
\item [(ii)] $u$ is a (viscosity) supersolution of \eqref{eik} if the following holds:\\
$\bullet$ for $x\in e_j$, $j\in\mJ$ and for any test function $\phi\in C^1(\cN)$ such that $(u-\phi)$ attains a local minimum at $x$, we have
\beq\label{supint}
| \pd_j \phi (x )|- \f 1 {\eta_j(x)}\ge 0\,;
\eeq
$\bullet$ for $x_i\in\cV$, $i\in \mI_T$, and for any test function $\phi\in C^1(\cN)$ such that $(u-\phi)$ attains a local minimum at $x_i$, we have
\beq\label{supvertex}
\max_{j\in\Inc_i}\left\{| \pd_j \phi (x_i)|- \f 1{\eta_j(x_i)}\right\}\ge0\,;
\eeq
\item [(iii)] $u$ is a (viscosity) solution of \eqref{eik} if it is both a viscosity subsolution and a viscosity supersolution of \eqref{eik}.
\end{itemize}
\end{definition}

\begin{definition}
Given a function $u\in W^{1,\infty}(\cN)$, we set
\beq\label{sing_c}
S_j(u):=\{t\in (0,\ell_j)\,:\, u_j \text{ is not differentiable at } t\}\,,\qquad j\in \mJ,
\eeq
and, whenever $\pd_j  u(x_i)$ exists and is not zero,
\beq\label{slope}
\s_{ij}(u):={\rm sgn}[\pd_j  u(x_i)]\,,\qquad i\in \mI,\,j\in \Inc_i\,,
\eeq
\beq\label{incpm}
\Inc_i^{\pm}(u):=\{j\in \Inc_i:\, \s_{ij}(u)=\pm 1\}\qquad\text{and}\qquad N_i^{\pm}(u)=\#\Inc_i^{\pm}(u)\,,\qquad i\in \mI\,.
\eeq
\end{definition}
The set $S_j(u)$ is the set of singular points of $u$ inside the edge $e_j$, while $\s_{ij}(u)$ is the slope of $u$ at the vertex $x_i$ along the arc $e_j$.  We observe that $\s_{ij}(u)=1$ (respectively $\s_{ij}(u)=-1$) if and only if the graph of $u_j$ leaves $x_i$ ``uphill'' (respectively ``downhill") the vertex $x_i$. Moreover, $\s_{ij}(u)$ does not depend on the orientation of $e_j$ induced by $\pi_j$.

The next proposition  states that if $u$ is a viscosity solution of the eikonal equation \eqref{eik}, then each edge contains no or exactly one singular point.
\begin{proposition}\label{3:p1}
Let  $u$ be a viscosity solution of \eqref{eik}. Then, $u\in(W^{1,\infty}\cap C)(\cN)$, satisfies the eikonal equation a.e. over $\cN$ and
\begin{itemize}
   \item [(i)]  $u$ does not attain a local minimum on $\cNint$;
   \item [(ii)] $u$ attains a local maximum at $x\in e_j$ if and only if $\pi_j^{-1}(x)\in S_j(u)$;
   \item [(iii)] for all  $i\in\mI$ and $j\in\mJ$, $\s_{ij}(u)$ is well defined and $\#S_j(u)\in \{0,1\}$; moreover, if $j\in \Inc_i\cap \Inc_k$, $i,k\in \mI$, it holds:\\
$\bullet$ $\#S_j(u)=0$ if and only if $\s_{ij}(u)+\s_{kj}(u)=0$,\\
$\bullet$ $\#S_j(u)=1$ if and only if $\s_{ij}(u)=\s_{kj}(u)=1$ ;
\item [(iv)] $\sum_{i\in\mI}N_i^\pm(u)\mp\sum_{j\in\mJ}\#S_j(u)=\#\mJ$ .
\end{itemize}
\end{proposition}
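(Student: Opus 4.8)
The plan is to first upgrade the viscosity solution to a Lipschitz function satisfying the eikonal equation pointwise a.e., and then to read off the geometric consequences edge by edge. Fixing an edge $e_j$, the subsolution inequality \eqref{subint} applied with affine test functions shows that $u_j$ is Lipschitz with constant $\|1/\eta_j\|_\infty$, so $u\in W^{1,\infty}(\cN)$ (and $u\in C(\cN)$ by hypothesis); by Rademacher's theorem $u_j$ is differentiable outside $S_j(u)$, and at each point of differentiability, combining \eqref{subint} with \eqref{supint} forces $|u_j'|=1/\eta_j$. Thus the eikonal equation holds a.e. on $\cN$. For (i) I would argue by contradiction: at a putative interior local minimum $x\in e_j$ the constant test function is admissible in \eqref{supint} and gives $0\ge 1/\eta_j(x)>0$, while at a transition vertex $x_i$, $i\in\mI_T$, the same constant test function in \eqref{supvertex} gives $\max_{j\in\Inc_i}(-1/\eta_j(x_i))\ge0$, both absurd since $\eta>0$.

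The heart of the argument is the local structure of $u_j$. On any open subinterval of $(0,\ell_j)$ disjoint from $S_j(u)$ the derivative $u_j'$ equals $\pm 1/\eta_j$; since a derivative has the intermediate value property and $1/\eta_j>0$, the sign cannot change, so $u_j'=\pm 1/\eta_j$ is continuous and $u_j$ is $C^1$ and strictly monotone there. In particular one-sided derivatives exist everywhere, and at a point of $S_j(u)$ the left and right slopes are $+1/\eta_j$ and $-1/\eta_j$ in some order, so the point is either a strict local maximum or a strict local minimum. Part (i) excludes the minimum, giving (ii): an interior singular point is exactly an interior local maximum (conversely a differentiable interior point has $|u_j'|=1/\eta_j\neq0$, hence is not a local extremum). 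The bound $\#S_j(u)\le1$ then follows because two interior maxima would force, by continuity, an interior minimum strictly between them, again contradicting (i).

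It remains to analyze the vertices and count. Since each edge carries at most one interior singularity, a one-sided neighbourhood of each endpoint is singularity-free, so $u_j$ is $C^1$ up to that endpoint and the oriented derivative \eqref{eq:orientedderivative} equals $\pm 1/\eta_j$ evaluated at the endpoint; in particular $\pd_j u(x_i)\neq0$ and $\s_{ij}(u)$ is well defined. For an edge $e_j$ joining $x_i=\pi_j(0)$ and $x_k=\pi_j(\ell_j)$ I would then distinguish the two cases. If $\#S_j(u)=0$, $u_j$ is strictly monotone on the whole edge, so it leaves one endpoint uphill and the other downhill, i.e. $\s_{ij}(u)+\s_{kj}(u)=0$. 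If $\#S_j(u)=1$, the unique singular point is a local maximum, so $u_j$ increases from $x_i$ to the peak and decreases from the peak to $x_k$; hence the graph leaves both endpoints uphill and $\s_{ij}(u)=\s_{kj}(u)=1$. This is precisely (iii).

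Finally (iv) is a double counting. Summing the incidences gives $\sum_{i\in\mI}\#\Inc_i=2\#\mJ$ (each edge has two distinct endpoints, there being no loops), and $\#\Inc_i=N_i^+(u)+N_i^-(u)$ since every $\s_{ij}(u)$ is defined and equal to $\pm1$. By (iii) an edge with $\#S_j(u)=0$ contributes one unit to $\sum_i N_i^+(u)$ and one to $\sum_i N_i^-(u)$, while an edge with $\#S_j(u)=1$ contributes two units to $\sum_i N_i^+(u)$ and none to $\sum_i N_i^-(u)$. Writing $n_1=\#\{j:\#S_j(u)=1\}=\sum_j \#S_j(u)$ and $n_0=\#\mJ-n_1$, one gets $\sum_i N_i^+(u)=n_0+2n_1$ and $\sum_i N_i^-(u)=n_0$, whence both identities $\sum_i N_i^\pm(u)\mp\sum_j\#S_j(u)=n_0+n_1=\#\mJ$ follow. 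I expect the only delicate point to be the local structure step of the second paragraph --- establishing the $C^1$ and monotone behaviour between singularities and correctly using the supersolution property to forbid interior minima --- since everything afterwards is bookkeeping with the fixed orientation convention for $\s_{ij}(u)$.
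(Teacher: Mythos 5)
Your strategy is essentially the paper's: both proofs pivot on the fact that a viscosity supersolution cannot have an interior weak local minimum, proved with constant test functions (your explicit vertex case via \eqref{supvertex} matches what the paper dismisses as ``similar arguments''), and your double counting for (iv) is a correct expansion of what the paper calls a straightforward consequence. The Lipschitz bound and the a.e.\ eikonal identity at differentiability points are also fine (both are classical facts, see e.g.\ \cite{b}).

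The genuine gap is exactly at the step you flagged. Rademacher's theorem only tells you that $S_j(u)$ is Lebesgue-null; for a general Lipschitz function the non-differentiability set can be \emph{dense} in a subinterval. Your Darboux argument applies only on open subintervals disjoint from $S_j(u)$, and the ensuing claims --- ``one-sided derivatives exist everywhere'' and ``at a point of $S_j(u)$ the left and right slopes are $+1/\eta_j$ and $-1/\eta_j$ in some order'' --- tacitly assume that every singular point has one-sided neighbourhoods free of other singular points, i.e.\ that $S_j(u)$ is isolated. That is essentially the statement $\#S_j(u)\le1$ you are trying to prove, so as written the second paragraph is circular (and vacuous in the bad scenario where no singularity-free subintervals exist near a given singular point). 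The repair stays inside your toolkit but uses part (i) more aggressively: for $0\le s<r<t\le\ell_j$ one must have $u_j(r)>\min\{u_j(s),u_j(t)\}$, since otherwise $u_j$ attains its minimum over $[s,t]$ at an interior point, which is a weak local minimum. This strict quasi-concavity forces $u_j$ to be strictly increasing up to its unique maximum point $\bar t$ and strictly decreasing afterwards. Then on $[0,\bar t]$ monotonicity fixes the sign of $u_j'$ at a.e.\ point, so $u_j'=1/\eta_j$ a.e.\ there, and absolute continuity (Lipschitz) gives $u_j(t)=u_j(0)+\int_0^t\frac{ds}{\eta_j(s)}$, a $C^1$ function; symmetrically on $[\bar t,\ell_j]$. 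This yields differentiability off $\bar t$, the one-sided slopes $\pm1/\eta_j$ at $\bar t$ and at the endpoints, hence $S_j(u)\subseteq\{\bar t\}$ --- after which your (ii), (iii) and the counting in (iv) go through verbatim. (For comparison, the paper's appendix simply asserts this dichotomy after proving the no-minimum and at-most-one-maximum claims, so your patched argument would actually be more complete than the published one.)
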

%%%%

\begin{proposition}\label{prop_eik1}
For any fixed $x_0\in \cN$, the function  $\cD(x_0,\cdot)$ is a viscosity solution of \eqref{eik} in $\cNint\setminus\{x_0\}$ and $d$ is the unique  viscosity solution of \eqref{eik} with $d=0$ on~$\partial \cN$.
\end{proposition}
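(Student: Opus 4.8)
The plan is to establish the two assertions separately, deriving everything from the metric structure of $\cD$ and from the existence of geodesic paths, which is guaranteed immediately after \eqref{dist}. \emph{First}, for the claim that $\cD(x_0,\cdot)$ solves \eqref{eik}, I would begin with the triangle inequality $|\cD(x_0,x)-\cD(x_0,y)|\le\cD(x,y)$, an immediate consequence of the definition \eqref{dist}. Restricted to a single edge $e_j$, the local form of $\cD$ shows that $t\mapsto\cD(x_0,\pi_j(t))$ is Lipschitz with derivative bounded by $1/\eta_j(t)$ for a.e. $t$; hence $\cD(x_0,\cdot)\in(W^{1,\infty}\cap C)(\cN)$ and a routine test-function argument gives the subsolution inequality \eqref{subint} at every interior point of every edge.

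For the supersolution property I would use the dynamic programming principle built into \eqref{dist}. Given $x\ne x_0$, fix a geodesic from $x_0$ to $x$; moving back along it from $x$ by a small weighted length $h$ to a point $y_h$ gives the exact identity $\cD(x_0,y_h)=\cD(x_0,x)-h$. If $x$ lies in the interior of $e_j$ and a test function $\phi\in C^1(\cN)$ touches $\cD(x_0,\cdot)$ from below at $x$, then $\phi(y_h)-\phi(x)\le\cD(x_0,y_h)-\cD(x_0,x)=-h$, and letting $h\to0^+$ forces $|\pd_j\phi(x)|\ge1/\eta_j(x)$, i.e. \eqref{supint}. The same estimate at a transition vertex $x_i$, $i\in\mI_T$, carried out along the single edge $e_j$ through which the geodesic enters $x_i$, yields $\pd_j\phi(x_i)\le-1/\eta_j(x_i)$, so that the maximum in \eqref{supvertex} is nonnegative. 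This proves that $\cD(x_0,\cdot)$ is a viscosity solution in $\cNint\setminus\{x_0\}$.

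\emph{Second}, for $d$ defined by \eqref{dist_bd}, the same triangle inequality gives $|d(x)-d(y)|\le\cD(x,y)$, hence $d$ is a subsolution exactly as above; and $d$ is a supersolution because an infimum of supersolutions is again a supersolution, while locally near any interior point $d$ coincides with $\cD(\cdot,y^\ast)$ for a nearest boundary point $y^\ast$, so the vertex condition \eqref{supvertex} is inherited from the first part. Since $d=0$ on $\partial\cN$ by construction, $d$ is a solution with the prescribed datum.

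Uniqueness is where the theory of \cite{ls} is essential and is the main obstacle. I would obtain it from a comparison principle: if $w_1$ is a subsolution and $w_2$ a supersolution of \eqref{eik} with $w_1\le w_2$ on $\partial\cN$, then $w_1\le w_2$ on $\cN$; applying this to two solutions with boundary value $0$ and exchanging their roles gives equality. Inside the edges the comparison reduces to the classical one-dimensional eikonal argument: the Lipschitz bound above yields $w_1\le d$, while the supersolution inequality \eqref{supint}, which in particular forbids interior local minima, yields $w_2\ge d$. The delicate point, and the part I expect to be hardest, is propagating these inequalities through the transition vertices, where one must reconcile the maximum-type junction condition \eqref{supvertex} with the doubling-of-variables scheme in the sense of \cite{ls}; away from the vertices everything is standard.
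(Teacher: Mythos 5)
Your proposal is correct and follows essentially the same route as the paper's proof in Appendix A: the supersolution property of $\cD(x_0,\cdot)$ is obtained by stepping back along a geodesic (the paper's identity $\cD(x_0,x_i)-\cD(x_0,x)=\cD(x,x_i)$ is exactly your dynamic programming step), the supersolution property of $d$ comes from its being a finite minimum of the supersolutions $\cD(\cdot,y)$, $y\in\partial\cN$, and uniqueness is deferred to the comparison theorem of \cite{ls}, precisely as the paper does. One small imprecision: your claim that ``locally near any interior point $d$ coincides with $\cD(\cdot,y^{\ast})$'' is false in general (e.g.\ near singular points of $d$ the minimizing boundary point switches); what is actually needed, and what the paper uses, is only the touching inequality: if $(d-\phi)$ has a local minimum at $x_i$ and $d(x_i)=\cD(x_i,\bar y)$, then $\cD(\cdot,\bar y)-\phi$ also has a local minimum at $x_i$ because $\cD(\cdot,\bar y)\ge d$ everywhere with equality at $x_i$, after which your first-part argument applies verbatim.
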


We are now in a position to provide a complete description of the \emph{singular set} of the distance function $d$.  It is worth to recall that in the case of a smooth domain $\O\subset\R^n$, the singular set of the euclidian distance from $\partial\O$ is the set of points where  this function is not differentiable. Its closure coincides with the set of points having multiple geodesics connecting them to $\partial\O$.

In the case of a network, the structure of the singular set is determined as well by the structure of the network (see Proposition \ref{3:p1} (iv) and \eqref{carsingularsetofd} below). However, the two characterizations do not apply and do not coincide, in general, as they are: there could be points of the network connected  to the boundary by more than one geodesic path and where the distance from boundary is differentiable. Indeed, as proved above, if $d$ is not differentiable at $x\in\cE$, then $x$ is a local maximum point for $d$ and there are at least two geodesic paths connecting $x$ to $\partial\cN$. On the other hand, if $x_i\in\cV$, with $\#\Inc_i>2$, is a transition vertex connected to $\partial\cN$ by two distinct geodesic paths, and $x_i$ is not a maximum point of $d$, then there exists at least one edge incident to $x_i$ such that (all or some of) the points belonging to it can be also connected to $\partial\cN$ by the same geodesic paths. At  the same time the differentiability of $d$ on that points is not a priori excluded. It is also easy to observe that if $x_i\in\cV$, $i\in\mI_T$, is not a maximum point of $d$ on $\cN$, then there exist $j,k\in\mJ$ such that $j\in\Inc_i^+(d)$, $k\in\Inc_i^-(d)$ and so
\beq\label{weakdiff}
\s_{ij}(d)+\s_{ik}(d)=0\,.
\eeq
Note that by Proposition \ref{3:p1}, $\sigma_{ij}(d)$ and hence $\Inc_i^\pm(d)$ are  defined for all $i\in\mI$ and $j\in\mJ$. Furthermore, conditions \eqref{weakdiff}  can be interpreted as a weak differentiability (classical  if $\eta_j(x_i)=\eta_k(x_i)$) of $d$ at $x_i$  along the couple $(\bar e_j,\bar e_k)$.

In light of this observations, the natural definition of the singular set of $d$ to the case of a network, conciliating the two characterizations above, is the following
\beq\label{singularsetofd}
S(d):=\{\pi_j(S_j(d))\,;\,j\in\mJ\}\cup\{x_i\in\cV\,:\, d \text{ has a local maximum at } x_i\}\,.
\eeq
Using claim (iv) of Proposition \ref{3:p1} and the facts that $N_i^-(d)=0$ while $N_i^+(d)\ge1$ if $i\in\mI_B$, $N_i^-(d)\ge2$ while $N_i^+(d)=0$ if $x_i$ is a transition vertex where $d$ has a local maximum, and $N_i^\pm(d)\ge1$ on the remaining vertices, it is easily seen that
\beq\label{carsingularsetofd}
\#\mI-\#\mJ\le\#S(d)\le\#\mJ-\#\mI_T\,.
\eeq

Finally, we shall define a {\sl normal distance} to $S(d)$, selecting on each $\bar e_j$ the point nearest to $S(d)$. So, let introduce the{ \em projection set} in $\bar e_j$
\beq\label{Sigmaj}
\Sigma_j(d):=S_j(d)\cup T_j(d)\,,\qquad j\in\mJ\,,
\eeq
where
\beq\label{sing}
T_j(d):=\{\pi_j^{-1}(x_i)\,;\,x_i\in\overline e_j \text{ s.t. }  j\in\Inc_i^-(d)\}\,,\qquad j\in\mJ\,.
\eeq
By Proposition \ref{3:p1} again, given $j\in\mJ$, one set between $S_j(d)$ and $T_j(d)$ is a singleton and the other one is empty.
%In particular, if $T_j(d)\ne\emptyset$ , $T_j(d)\subset\{0,\ell_j\}$.
Therefore, $\Sigma_j(d)$ is also a singleton and, for all $j\in\mJ$, we can define the projection of $t\in [0,\ell_j]$ onto the projection set $\gS_j(d)$~as
\[
P_j(t):=t+\t_j(t)\,\eta_j (t)\,d_j'(t)\,,\qquad t\in [0,\ell_j]\,,
\]
where
\[
\t_j(t):=\min\{s\ge 0:\, t+s\,\eta_j(t)\,d_j'(t)\in\gS_j(d)\}
\]
measure the distance of $\pi_j(t)$ to $\pi_j(P_j(t))$ along the direction of $D_jd(\pi_j(t))$ in the metric \eqref{metric}. We have  that $(\t_j)_{j\in\mJ}\in\Pi_{j=1}^M C([0,\ell_j])$ and $(\t_j)_{j\in\mJ}$ is zero on $\Pi_{j=1}^M\Sigma_j(d)$. However, it is not possible to define from $(\t_j)_{j\in\mJ}$ a continuous function on $\cN$. This is one of the major differences with the normal distance to the {\sl cut locus} defined in \cite{cc,ccs}. On the other hand, it is easy to see that for any $j\in\mJ$ and $x\in\bar e_j$, setting $t=\pi_j^{-1}(x)$ and $y=\pi_j(t+r\,\eta_j(t)\,d'_j(t))\in\bar e_j$, with $r\in[0,\tau_j(t)]$, it holds
\beq\label{propdist}
d(y)=\cD(x,y)+d(x)\,.
\eeq
Furthermore, one can readily iterate the previous projection procedure  to prove that for all $x\in\cN$ there exists at least one $y\in S(d)$ such that \eqref{propdist} holds true. As a consequence, any geodesic path realizing $d(x)$ does not contain points of the singular set $S(d)$ except possibly $x$, $d$ is piecewise $C^1$ along the geodesic paths, and the only points of non differentiability are transition vertices of the path, where $d$ satisfies \eqref{weakdiff}.

%%%%%%%%%%%%%%%%%%%%%%%%
%  Sandpile problem    %
%%%%%%%%%%%%%%%%%%%%%%%%
\section{The sandpiles problem on $\cN$ : existence of a solution}\label{S_4}
Let $\eta$ be given as in \eqref{hyp_eta}, and let the (constant in time) matter source be represented by a function~$f$ satisfying
\beq\label{source}
f\in L^\infty(\cN)\,,\qquad f\ge0\,,\qquad\text{meas}(\supp(f))>0\,,
\eeq
where $\supp(f)$ stands for the usual essential support. In addition, to every subset $A_i$ of $\Inc_i$, $i\in\mI_T$, we associate a fixed collection of positive coefficients $(C_{ij})_{j\in A_i}$ such that $\sum_{j\in A_i}C_{ij}=1$.

We are now in a position to consider system \eqref{I3} on the network $\cN$, i.e.
\begin{align}
\label{LC}
-&\pd\left(v(x)\,\eta(x)\,\pd u(x)\right)=f(x)\,,&&\text{in } \cNint,\\
\label{subHJ}
&\eta(x)\,|\pd u(x)|\le1\,,&&\text{in } \cNint, \\
\label{HJ}
&\eta(x)\,|\pd u(x)|=1\,,&& \text{in } \{x\in\cNint\,:\, v(x)\ne0\}\,,\\
\label{positivity}
&u,\,v\ge 0\,,&& \text{in } \cN\,.
\end{align}
endowed with the Dirichlet homogeneous boundary condition at the boundary vertices
\beq\label{BC}
u(x_i)=0\,,\qquad i\in \mI_B\,,
\eeq
to complete \eqref{subHJ} and \eqref{HJ}, and the transmission conditions at each $x_i$, $i\in \mI_T$,
\beq\label{TC}
\begin{array}{ll}
v_j(\pi_j^{-1}(x_i))=0\,,&\text{if }\sigma_{ij}(u)\text{ is not defined and } j\in\Inc_i\,,\\[4pt]
v_j(\pi_j^{-1}(x_i))=C_{ij}\!\sum_{k\in \Inc_i^+(u)}v_k(\pi_k^{-1}(x_i))\,,&\text{if }\sigma_{ij}(u)\text{ is defined and } j\in \Inc_i^-(u)\,,
\end{array}
\eeq
to complete the conservation law \eqref{LC}. 

The $(C_{ij})_{j\in\Inc_i^-(u)}$ in \eqref{TC} are the positive coefficients associated to the subset $\Inc_i^-(u)$ of $\Inc_i$, and condition \eqref{TC} amounts to impose that the mass $\sum_{k\in \Inc_i^+(u)}v_k(x_i)$ of the rolling layer $v$ entering in a given transition vertex $x_i$ is released in each of the $N_i^-(u)$ outgoing edges according to the distribution coefficients $C_{ij}$. It is worth noticing here that it is possible to consider $C_{ij}=1/{N_i^-(u)}$ for all $j\in \Inc_i^-(u)$, which corresponds to assume that all the mass entering in a vertex is uniformly distributed in the outgoing arcs. Indeed, a remarquable consequence of the uniqueness result we shall prove is that the $N_i^-(u)$ are invariant for all $i\in\mI_T$ such that $v(x_i)\ne0$ and depend uniquely on the structure of the network, since here $\Inc_i^-(u)=\Inc_i^-(d)$ (see Corollary \ref{invariant}). Furthermore, since $\eta$ is not a priori continuous on the vertices, it has to be expected that the $v$ component of the solution is also not continuous on the vertices. This is the reason why \eqref{HJ} has to be solved on the set of $x\in\cNint$ where $v(x)\ne0$ instead of $v(x)>0$, thus including the transition vertices $x_i$ where $v_j(x_i)$ is zero for some of the $j\in\Inc_i$.
A natural definition of weak solution of \eqref{LC}--\eqref{TC} is then the following.
%Also observe that the network is naturally imbedded in $\R^n$, but the problem is intrinsically $1$-dimensional because differentiation along an edge is given in terms of   the parameter that describes the edge itself.
%
\begin{definition}\label{def_sol}
We say that $(u,v)$ is a solution of \eqref{LC}--\eqref{TC} if
\begin{itemize}
\item[(i)] $(v_j)_{j\in\mJ}\in \prod_{j=1}^MC([0,\ell_j])$  and $v\ge 0$ in $\cN$;
\item[(ii)]  $u\in (W^{1,\infty}\cap C)(\cN)$, $u\ge 0$ in $\cN$,  $\eta(x)\,|\pd u(x)|\le1$ a.e. in $\cNint$ and $u$ is a  viscosity solution~of
\[
|\pd u(x)|-\f 1{\eta(x)}=0\,,\quad \text{in } \{x\in\cNint:\ v(x)\ne 0\}\,;
\]
\item[(iii)]  For every function $\psi\in(W^{1,\infty}\cap C)(\cN)$ such that $\psi=0$ on $\partial \cN$, it holds
\beq\label{weak_eq}
\int_\cN v(x)\,\eta(x)\,\pd u(x)\,\pd\psi(x)dx=\int_\cN f(x)\psi(x)\,dx\,;
\eeq
\item[(iv)] $u$ and $(u,v)$ satisfy the boundary and the transition conditions \eqref{BC} and \eqref{TC}, respectively\,.
\end{itemize}
\end{definition}

Before proving the existence result for \eqref{LC}--\eqref{TC}, it is useful to analyse the consistency and well-posedness of the transition condition \eqref{TC}. First of all, it is worth noticing that if $(u,v)$ is a classical solution of \eqref{LC}, then $(u,v)$ satisfies \eqref{weak_eq} and the conservation of the flux at each transition vertices $x_i$, i.e.
\beq\label{cons_flux 2}
\sum_{j\in \Inc_i}v_j(x_i)\,\eta_j(x_i)\,\pd_ju(x_i)=0,\quad i\in \mI_T\,,
\eeq
and vice-versa. However, \eqref{cons_flux 2} is not sufficient to make the problem well posed since the values
$v_j(x_i)$ for each $j\in \Inc_i$ are not univocally determined by  \eqref{cons_flux 2}, and more specific conditions has to be considered. Next, if $(u,v)$ is a solution in the sense of the definition above, and if $j\in\Inc_i$, $i\in\mI_T$, is such that $\sigma_{ij}(u)$ is not defined, it holds necessarily $v_j(x_i)=0$. Indeed, assuming by contradiction that $v_j(x_i)> 0$, the continuity of $v_j$ on $[0,\ell_j]$ implies the existence of a sub-interval of $[0,\ell_j]$, with one endpoint in $\pi_j^{-1}(x_i)$, along which $v_j>0$. Hence, $u$ is a viscosity solution of the eikonal equation on that sub-interval and Proposition~\ref{3:p1} assures the existence of $\s_{ij}(u)$. On the other hand, if $\sigma_{ij}(u)$ is well defined, it determines if $j$ is either in $\Inc_i^{+}(u)$ or in $\Inc_i^{-}(u)$. If eventually $\Inc_i^+(u)=\emptyset$, we use the classical convention that the sum in the r.h.s. of \eqref{TC} is zero. In any case,  the transition condition \eqref{TC} is meaningful and implies
\beq\label{cons_flux}
\sum_{j\in \Inc_i}v_j(\pi_j^{-1}(x_i))\sigma_{ij}(u)=0,\quad i\in \mI_T\,,
\eeq
i.e., by Proposition~\ref{3:p1} again, the conservation of the flux \eqref{cons_flux 2} at each transition vertices.

We now prove the existence result giving an explicit representation formula for a solution of the problem. This formula generalizes the one in \cite{hk} (see also \cite{cc,ccs}) and at the same time takes into account the transmission of the matter through the transition vertex.
%%%%%%%%%%%%
\begin{theorem}\label{existence}
A solution of \eqref{LC}--\eqref{TC} in the sense of Definition \ref{def_sol}, is given by the pair $(d,v^f)$, with $d$ the distance function defined in \eqref{dist_bd} and the projections $v_j^f$, $j\in \mJ$, of $v^f$ given by
%%%%%%%%%%%%
\begin{equation}\label{v}
v^f_j (t)=\int_0^{\t_j(t)}f_j\left(t+r\,\eta_j(t)\,d_j'(t)\right)dr+\Big(C_{ij}\sum_{k\in \Inc_i^+(d)}v^f_k(\pi_k^{-1}(x_i))\Big)\carat_{T_j(d)}(P_j(t))\,,\quad t \in [0,\ell_j]\,,
\end{equation}
where $\carat_{T_j(d)}$ denotes the characteristic function of $T_j(d)$ and $x_i=\pi_j(P_j(t))$. Moreover, $v^f$ is zero on the singular set $S(d)$ defined in \eqref{singularsetofd}, $v^f\in W^{1,\infty}(\cN)$ and $(d,v^f)$ satisfies \eqref{LC} pointwise on $\cE\setminus\{\pi_j(S_j(d));j\in\mJ\}$.
\end{theorem}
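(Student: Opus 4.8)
The plan is to verify, for the explicit pair $(d,v^f)$, the four items of Definition \ref{def_sol}, treating the $u$-component first since it costs almost nothing. By Proposition \ref{prop_eik1}, $d$ is the viscosity solution of \eqref{eik} with $d=0$ on $\partial\cN$, and by Proposition \ref{3:p1} it belongs to $(W^{1,\infty}\cap C)(\cN)$, is nonnegative, and satisfies $\eta\,|\pd d|=1$ a.e.; since a solution of \eqref{eik} on all of $\cNint$ is a fortiori one on the subset $\{v^f\neq0\}$, item (ii) and the boundary condition \eqref{BC} hold, and all the remaining work concerns $v^f$.

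First I would check that \eqref{v} is well posed. It is recursive, since the vertex masses $v_k^f(\pi_k^{-1}(x_i))$, $k\in\Inc_i^+(d)$, reappear on the right; but every such term refers to a point with strictly larger value of $d$ (both the projection $P_j$ and the edges of $\Inc_i^+(d)$ point uphill), so ordering the finitely many vertices by decreasing $d$ makes the definition well founded, the recursion stopping at local maxima, where $\Inc_i^+(d)=\emptyset$ and the sum is empty. The same induction gives $v^f\ge0$, because $f\ge0$, $\t_j\ge0$ and the $C_{ij}$ are positive. For the regularity I would use that $\Sigma_j(d)$ in \eqref{Sigmaj} is a singleton $\{t_j^*\}$ (Proposition \ref{3:p1}): since $\eta_j d_j'$ is constant, equal to $\pm1$, on each monotone branch of $d_j$ by the eikonal equation, one has $P_j(t)\equiv t_j^*$, the map $\t_j(t)=|t-t_j^*|$ is Lipschitz, and $\carat_{T_j(d)}(P_j(t))$ is constant on $[0,\ell_j]$. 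Hence the second summand of \eqref{v} is constant and the first is Lipschitz (with modulus controlled by $\|f\|_\infty$), so each $v_j^f$ is Lipschitz, giving $(v_j^f)_j\in\prod_jC([0,\ell_j])$ and $v^f\in W^{1,\infty}(\cN)$; together with $v^f\ge0$ this is item (i) and the regularity in the statement. Evaluating \eqref{v} at $t=t_j^*$ gives $v^f=0$ on $S(d)$, since there $\t_j=0$ and either $T_j(d)=\emptyset$ (interior maximum) or $\Inc_i^+(d)=\emptyset$ (maximal vertex).

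The transition conditions \eqref{TC}, hence item (iv), then come out of \eqref{v} by inspection: $\s_{ij}(d)$ being defined for all $i,j$, the first line of \eqref{TC} is vacuous, while for $j\in\Inc_i^-(d)$ one has $\pi_j^{-1}(x_i)=t_j^*\in T_j(d)$, so at $t=t_j^*$ the integral term vanishes, $\carat_{T_j(d)}(P_j(t_j^*))=1$, and \eqref{v} is precisely its second line. The pointwise identity \eqref{LC} is a one-dimensional computation on each open edge away from $S_j(d)$: substituting $s=t+r\,\eta_j(t)d_j'(t)$ and using that $\eta_j d_j'\equiv\pm1$ rewrites the integral in \eqref{v} as $\pm\int_t^{t_j^*}f_j(s)\,ds$, whose derivative is $\mp f_j$ a.e.; since the constant summand drops and $F_j:=v_j^f\,\eta_j\,d_j'=\pm v_j^f$, this is exactly $-F_j'=f_j$.

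It remains to prove the weak formulation \eqref{weak_eq} of item (iii), which I would obtain by integrating by parts edge by edge. The key point is that the flux $F_j$ is continuous across an interior maximum $t_j^*$ with value $0$ there — precisely because $v^f$ vanishes on $S(d)$ — so $F_j$ is Lipschitz on all of $[0,\ell_j]$ and the integration by parts leaves no interior interface term, only $\int_0^{\ell_j}f_j\psi_j$ plus the two endpoint contributions. Summing over $j$, collecting by vertex, and using $\eta_j(x_i)\pd_j d(x_i)=\s_{ij}(d)$ together with the continuity of $\psi$, the total boundary contribution reduces to $-\sum_{i\in\mI}\psi(x_i)\sum_{j\in\Inc_i}v_j^f(\pi_j^{-1}(x_i))\s_{ij}(d)$; the terms with $i\in\mI_B$ vanish since $\psi=0$ on $\partial\cN$, and those with $i\in\mI_T$ vanish by the flux conservation \eqref{cons_flux}, already shown to follow from the transition conditions \eqref{TC} established above. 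What remains is exactly $\int_\cN f\psi$. I expect this last step to be the main obstacle: making the interface terms at interior maxima disappear (which hinges on $v^f(t_j^*)=0$) and matching the vertex sums to \eqref{cons_flux} with the correct signs issued from the oriented derivatives \eqref{eq:orientedderivative}. The well-foundedness of the recursion defining $v^f$ is the other delicate point, but it is routine once the monotonicity of $d$ along the characteristics is used.
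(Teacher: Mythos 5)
Your proof is correct and follows essentially the same route as the paper's: reduce everything to the $v^f$-component via Propositions \ref{3:p1} and \ref{prop_eik1}, derive the explicit edge-wise formulas (monotone edges versus edges with an interior singular point), verify \eqref{TC} and the vanishing of $v^f$ on $S(d)$ by direct evaluation of \eqref{v}, and obtain \eqref{weak_eq} by edge-wise integration by parts combined with the flux conservation \eqref{cons_flux} at transition vertices. Your two refinements --- the explicit well-foundedness argument for the recursion in \eqref{v} (ordering vertices by increasing $d$), and the observation that the flux $v^f_j\,\eta_j\,d_j'$ is Lipschitz and vanishes at interior singular points, which unifies the two integration-by-parts cases \eqref{ex1} and \eqref{ex2} --- are points the paper leaves implicit, but they do not change the structure of the argument.
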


Let us observe that the first term in the r.h.s. of \eqref{v} is non negative and takes into account the matter poured by the source vertically onto each edge (see \cite{cc,hk}). Concerning the second term, for a fixed $j\in\mJ$, if $T_j(d)$ is empty, or equivalently if $S_j(d)$ is a singleton, $\carat_{T_j(d)}\equiv0$ and the term makes sense giving no contribution. If $T_j(d)$ is not empty, then $T_j(d)=\{P_j(t)\}\subset\{0,\ell_j\}$ for all $t \in [0,\ell_j]$, $S_j(d)$ is empty, $d$ is strictly monotone on $e_j$ and $\pi_j(P_j(t))$ is the endpoint $x_i$ of $e_j$ such that $\s_{ij}(d)=-1$. Again, the second term makes sense and it gives a positive contribution to $ v^f_j (t)$ iff $x_i$ is not a maximum point for $d$ over $\cN$ and therefore $\Inc_i^+(d)$ is not empty (i.e. if there are edges $e_k$ ``ingoing downhill'' into $x_i$). Resuming, the second term in the r.h.s. of \eqref{v} adds to the rolling layer due to the source $f$, the rolling layer coming from the ingoing edges.
\begin{proof}
Thanks to Propositions \ref{3:p1} and \ref{prop_eik1}, we need only to prove that $(d,v^f)$ satisfies conditions {\sl(i)} and {\sl(iii)} in Definition \ref{def_sol}, the transition condition \eqref{TC}, and to check that $(v^f_j)_{j\in \mJ}$ is zero on $S(d)$.

The positiveness of $v^f$ in $\cN$ follows by the definition itself. Next, if $t\in S_j(d)$, as already observed $T_j(d)$ is empty and $\t_j(t)=0$, so that both terms in \eqref{v} are zero.  Hence, $(v^f_j)_{j\in\mJ}$ is zero on the maximum points of $d$ belonging to the edges, i.e. on $\prod_{j=1}^MS_j(d)$. If $x_i\in\cV$ is a maximum point of $d$ on $\cN$, then for all $j\in\Inc_i$ it holds that $S_j(d)$ is empty, $T_j(d)=\{\pi_j^{-1}(x_i)\}$ and   $\t_j(t)=0$ for $t=\pi_j^{-1}(x_i)$. Hence, the first term in \eqref{v} is zero. Since the  set $\Inc_i^+(d)$ is necessarily empty too, the second term in \eqref{v} is also zero and $v^f$ results to be zero on the maximum that $d$ attains on $\cV$.

Concerning the transition conditions \eqref{TC}, recall that $\sigma_{ij}(d)$ is always defined. Moreover, if $x_i$ is a transition node and $j\in \Inc_i^-(d)$, then $t=\pi_j^{-1}(x_i)\in T_j(d)$, $\t_j(t)=0$, $P_j(t)=t$ and \eqref{v} reduces to \eqref{TC}. The conservation of the flux \eqref{cons_flux 2} follows too.

It remains to obtain the regularity of $(v^f_j)_{j\in\mJ}$ and to prove \eqref{weak_eq}. Let $j\in\mJ$ be fixed. Assume that $S_j(d)=\emptyset$ and that $d$ is increasing along $e_j$. Then, $T_j(d)=\{\ell_j\}$, $P_j(t)=\ell_j$, $\eta_j(t)\,d'_j(t)=1$ for all $t\in[0,\ell_j)$ and $\t_j(t)=\ell_j-t$ for all $t\in[0,\ell_j]$. Hence, with $i\in\mI_T$ s.t. $j\in\Inc_i^-(d)$, using \eqref{TC}, \eqref{v} becomes
\begin{equation}\label{f0}
v^f_j(t)=\int_0^{\ell_j-t}f_j(t+r)dr+C_{ij}\sum_{k\in \Inc_i^+(d)}v^f_k(\pi_k^{-1}(\pi_j(\ell_j))=\int_t^{\ell_j} f_j(r)dr+v^f_j(\ell_j)\,,\quad t\in[0,\ell_j]\,.
\end{equation}
In particular $v_j^f$ is continuous on $[0,\ell_j]$ and belongs to $W^{1,\infty}(0,\ell_j)$ with $(v^f_j)'(t)=-f_j(t)$ a.e. $t\in(0,\ell_j)$. Therefore, for a test function $\psi$ as in Definiton \ref{def_sol}, and for $k\in\mI$ s.t. $j\in\Inc_k^+(d)$, we have
\beq\label{ex1}
\begin{split}
\int_{e_j} &v^f(x)\,\eta(x)\,\pd_j d(x)\,\pd_j\psi(x)dx=\int_0^{\ell_j}v^f_j(t)\,\psi'_j(t)\,dt=-\int_0^{\ell_j}(v^f_j)'(t)\,\psi_j(t)dt+\big[v^f_j(t)\psi_j(t)\big]_0^{\ell_j}\\
&=\int_0^{\ell_j} f_j(t)\psi_j(t)dt+v^f_j(\ell_j)\psi_j(\ell_j)-v^f_j(0)\psi_j(0)\\
&=\int_{e_j}f(x)\psi(x)\,dx-\sigma_{ij}(d)v^f_j(\pi_j^{-1}(x_i))\psi_j(\pi_j^{-1}(x_i))-\sigma_{kj}(d)v^f_j(\pi_j^{-1}(x_k))\psi_j(\pi_j^{-1}(x_k))\,.
 \end{split}
\eeq

If $S_j(d)=\emptyset$ and $d$ is decreasing along $e_j$, it is easily seen that \eqref{ex1} still holds true by reflection.

Now, assume that $T_j(d)=\emptyset$ and $S_j(d)=\{\bar t\,\}$. Then, $\carat_{T_j(d)}\equiv0$, $P_j(t)=\bar t$, $\eta_j(t)\,d_j'(t)=1$ for $t\in (0,\bar t\,)$ and $\eta_j(t)\,d_j'(t)=-1$ for $t\in (\bar t,\ell_j)$. Hence,  for $t \in [0,\bar t)$
\beq\label{f1}
\t_j(t)=\bar t-t\qquad\text{and}\qquad v^f_j(t)=\int_0^{\bar t-t}f_j(t+r)dr=\int_t^{\bar t} f_j(r)dr\,,
\eeq
while, for $t\in (\bar t,\ell_j]$,
\beq\label{f2}
\t_j(t)= t -\bar t\qquad\text{and}\qquad v^f_j(t)=\int_0^{t-\bar t}f_j(t-r)dr  =\int_{\bar t}^t f_j(r)dr\,.
\eeq
Therefore, $(v^f_j)'(t)=-f_j(t)$ a.e. $t \in (0,\bar t)$, $(v^f_j)'(t)=f_j(t)$ a.e. $t\in (\bar t,\ell_j)$, $v^f_j\in W^{1,\infty}(0,\ell_j)$, and we found again that $v^f_j(\bar t)=0$.
Denoting $x_i$ and $x_k$ the endpoints of $e_j$ and computing as before, we~get
\beq\label{ex2}
\begin{split}
\int_{e_j} &v^f(x)\,\eta(x)\,\pd_j d(x)\,\pd_j\psi(x)dx=\int_0^{\bar t}v^f_j(t)\,\psi'_j(t)\,dt-\int_{\bar t}^{\ell_j}v^f_j(t)\,\psi'_j(t)\,dt\\
&=\int_{0}^{\ell_j}f_j(t)\,\psi_j(t)dt-v^f_j(0)\psi_j(0)-v^f_j(\ell_j)\psi_j(\ell_j)\\
&=\int_{e_j}f(x)\psi(x)\,dx-\sigma_{ij}(d)v^f_j(\pi_j^{-1}(x_i))\psi_j(\pi_j^{-1}(x_i))-\sigma_{kj}(d)v^f_j(\pi_j^{-1}(x_k))\psi_j(\pi_j^{-1}(x_k))\,.
\end{split}
\eeq
It is worth noticing that in this case $j\in\Inc_i^+(d)\cap\Inc_k^+(d)$.

Finally, summing up \eqref{ex1} and \eqref{ex2} with respect to $j\in\mJ$, taking into account that $\psi\in C(\cN)$ is zero on $\partial \cN$ and that $(d,v^f)$ satisfies \eqref{cons_flux}, we obtain
\[
\int_\cN v^f(x)\,\eta(x)\,\pd d(x)\,\pd\psi(x)dx=\int_\cN f(x)\psi(x)dx-\sum_{i\in\mI_T}\psi(x_i)\sum_{j\in\Inc_i}\sigma_{ij}(d)v^f_j(\pi_j^{-1}(x_i))=\int_\cN f(x)\psi(x)dx
\]
and \eqref{weak_eq} is proved.
\end{proof}

%\begin{remark}\label{rmHK}
%Formula \eqref{v} includes the $1$-dimensional table problem described in \cite{hk}, with table $[0,\ell]$, boundary in $0$ and $\ell$ and $\eta\equiv 1$. Indeed, in this case $d(x)= x\cdot\carat_{[0,\ell/2]}(x)+ (\ell-x)\cdot\carat_{[\ell/2,\ell]}(x)$, $S(d)=\{\ell/2\}$, $T(d)=\emptyset$, $\carat_{T(d)}(x)=0$, for all $x\in [0,\ell]$ and \eqref{v} becomes
%\[
%v^f(x)=\int_0^{\t(t)}f(t+rd'(t))\,dt=\left\{
%                                       \begin{array}{ll}
%                                         \int, & \hbox{$x\in [0,\ell/2]$;} \\
%                                         , & \hbox{.}
%                                       \end{array}
%                                     \right.
%\]
%
%\end{remark}
%%%%%%%%%%%%%%%%%%
%                                                       %
%%%%%%%%%%%%%%%%%%
\section{Uniqueness on $\supp(v^f)$}\label{S_5}
This section is devoted to the proof of a uniqueness result for \eqref{LC}--\eqref{TC} over $\supp(v^f)$, where for $\supp(v^f)$ we intend $\Pi_{j=1}^M\pi_j(\supp(v^f_j))$. In order to illustrate the complexity of the uniqueness problem, in the fashion of \cite{ccs,hk}, we introduce the function
\[
u^f(x):=\max_{y\in \supp(f)}\left[d(y)-\cD(x,y)\right]_+\,,\qquad x\in\cN\,,
\]
and the space $X:=\{u\in(W^{1,\infty}\cap C)(\cN)\,:\,\eta(x)|\pd u(x)|\le1\text{ a.e. } x\in\cN\}$.

It is easily seen that $X=\text{Lip}^1(\cN):=\{u\in C(\cN)\,:\,|u(x_1)-u(x_2)|\le\cD(x_1,x_2)\,,\forall\ x_1,x_2\in\cN\}$
and that the distance function $d$ is the maximal nonnegative function in $X_0:=\{u\in X\,:\,u=0 \text{ on }\partial\cN\}$. Concerning $u^f$ we have the following.
\begin{lemma}\label{lemma_uf}
The function $u^f$ belongs to $X$, satisfies $0\le u^f\le d$ in $\cN$ and it is the smallest nonnegative function among the nonnegative functions $u\in X$ such that $u=d$ on $\supp(f)$.
%\beq\label{ufsmallest}
%u=d\quad\text{ on }\quad\supp(f)\qquad\text{ and }\qquad |\pd u(x)|\le\f1{\eta(x)}\quad\text{ a.e. in } \cN\,.
%\eeq
Moreover,
\begin{itemize}
\item[(i)] $u^f=d$ in $\supp(v^f)$ ;
\item[(ii)] $u^f=d$ in $\cN$ if and only if $S(d)\subset\supp(f)$ .
\end{itemize}
\end{lemma}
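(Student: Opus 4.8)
The plan is to establish each claim of the lemma in turn, starting with the membership and the two-sided bound, then the minimality, and finally the two characterizations (i) and (ii). Throughout, the key tool is the triangle inequality for the weighted distance $\cD$ and the characterization $X=\Lip^1(\cN)$ that has already been recorded just before the statement.

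**First I would show** $u^f\in X$ and $0\le u^f\le d$. For the upper bound, fix $x\in\cN$ and $y\in\supp(f)$; since $d(y)=\min_{z\in\partial\cN}\cD(y,z)$ is realized by some boundary vertex $z_y$, the triangle inequality gives $d(y)-\cD(x,y)\le\cD(y,z_y)-\cD(x,y)\le\cD(x,z_y)$, and taking the positive part and the minimum over the boundary yields $[d(y)-\cD(x,y)]_+\le d(x)$; maximizing over $y$ gives $u^f\le d$. Nonnegativity is immediate from the positive part. For the Lipschitz bound I would use that each map $x\mapsto[d(y)-\cD(x,y)]_+$ is a composition of the $1$-Lipschitz (in the $\cD$ metric) function $x\mapsto-\cD(x,y)$ with the $1$-Lipschitz positive-part operation, hence lies in $\Lip^1(\cN)$; since $u^f$ is a supremum of a family of $\Lip^1$ functions it is itself $\Lip^1$, and therefore $u^f\in X$ by the identification $X=\Lip^1(\cN)$. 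That $u^f=d$ on $\supp(f)$ follows by taking $y=x$ in the max, which gives $u^f(x)\ge d(x)$, combined with the bound $u^f\le d$.

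**Next I would prove minimality.** Let $u\in X$ be nonnegative with $u=d$ on $\supp(f)$. Since $u\in\Lip^1(\cN)$, for every $x\in\cN$ and every $y\in\supp(f)$ we have $u(y)-u(x)\le\cD(x,y)$, i.e. $d(y)-\cD(x,y)\le u(x)$; as $u\ge0$ this forces $[d(y)-\cD(x,y)]_+\le u(x)$, and taking the maximum over $y\in\supp(f)$ gives $u^f(x)\le u(x)$. Thus $u^f$ is the smallest such function, as claimed.

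**For claim (i)**, I would argue that on $\supp(v^f)$ one can run the geodesic/projection structure from Section \ref{S_3}. Fix $x\in\supp(v^f)$; from the representation formula \eqref{v} and the discussion following it, $v^f_j(t)>0$ forces the source to feed $x$ along the descending direction of $d$, so there is a point $y\in\supp(f)$ on a geodesic path emanating from $x$ with $d(y)=\cD(x,y)+d(x)$ via \eqref{propdist}; this yields $d(y)-\cD(x,y)=d(x)$, whence $u^f(x)\ge d(x)$, and with $u^f\le d$ we get $u^f=d$ on $\supp(v^f)$. For claim (ii), the forward direction is the hard case: if $u^f=d$ everywhere, I would evaluate at a singular point $x_0\in S(d)$; by \eqref{propdist} and the geodesic structure, $d(x_0)=d(y)-\cD(y,x_0)$ for some $y$ attaining the maximum, but unless that $y$ lies in $\supp(f)$ the quantity $d(y)-\cD(x_0,y)$ drops strictly below $d(x_0)$ (because $d$ increases along the geodesic past $\supp(f)$ only up to a singular point), forcing $x_0\in\supp(f)$, so $S(d)\subset\supp(f)$. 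Conversely, if $S(d)\subset\supp(f)$, then for arbitrary $x\in\cN$ the iterated projection from the end of Section \ref{S_3} produces a point $y\in S(d)\subset\supp(f)$ with $d(y)=\cD(x,y)+d(x)$, giving $u^f(x)\ge d(x)$ and hence equality everywhere.

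**The main obstacle** I anticipate is the forward direction of (ii): turning ``$u^f=d$ at every point'' into the set inclusion $S(d)\subset\supp(f)$ requires carefully exploiting the geometry of geodesics near maxima of $d$ (the structure in Proposition \ref{3:p1} and the relation \eqref{propdist}) to show that the maximizer $y$ realizing $u^f(x_0)=d(x_0)$ at a singular point cannot sit strictly before $x_0$ along the geodesic, so that it must coincide with a point of $\supp(f)$ lying at the singular point itself. The other claims reduce cleanly to the triangle inequality and the already-established projection identity \eqref{propdist}.
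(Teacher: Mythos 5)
Your treatment of the membership $u^f\in X$, the bounds $0\le u^f\le d$, the identity $u^f=d$ on $\supp(f)$, and the minimality claim is correct and essentially the paper's argument (your ``supremum of $\Lip^1$ functions is $\Lip^1$'' shortcut is a clean substitute for the paper's two-case verification of the Lipschitz bound). Claim (ii) also follows the paper's route: the converse via the iterated projection onto $S(d)$, and the forward direction via the fact that a geodesic realizing $d(y)$ cannot pass through a singular point --- though you leave this at the level of an idea, where the paper makes it precise by choosing $z\in\partial\cN$ with $d(x)=\cD(x,z)$, deducing $d(y)=\cD(y,z)=\cD(x,y)+d(x)$, and concluding that a geodesic from $y$ to $\partial\cN$ would have to pass through $x\in S(d)$, which is impossible.

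The genuine gap is in claim (i), which you dispatch in one sentence but which is where the paper invests most of its effort. You assert that $v^f_j(t)>0$ ``forces'' the existence of $y\in\supp(f)$ with $d(y)=\cD(x,y)+d(x)$ ``via \eqref{propdist}''. But \eqref{propdist} is an intra-edge identity, and the representation formula \eqref{v} has two terms: the integral of $f_j$ along the edge, and the transmission term $C_{ij}\sum_{k\in \Inc_i^+(d)}v^f_k(\pi_k^{-1}(x_i))$. When the positivity of $v^f_j(t)$ comes only from the transmission term --- i.e.\ $\supp(f)$ does not meet the edge containing $x$ at all, and the rolling mass arrives from other edges through a transition vertex --- your argument produces no source point $y$. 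The paper handles exactly this case by an iteration: if the integral term vanishes, there exists $k\in\Inc_i^+(d)$ with $v^f_k(\pi_k^{-1}(x_i))>0$; one moves to $e_k$, repeats the dichotomy (source on $e_k$ versus transmission at the next vertex), and glues the estimates across the vertex using $d(x_i)=d(x_0)+\cD(x_0,x_i)$ and $\cD(x_0,x_1)=\cD(x_0,x_i)+\cD(x_i,x_1)$ --- identities which themselves require an argument about geodesics passing through $x_0$ and $x_i$, since \eqref{propdist} does not apply across edges. Finiteness of the network then guarantees the iteration reaches $\supp(f)$ and yields \eqref{claimA1}. Without this cross-edge propagation your proof of (i) covers only points whose own edge meets $\supp(f)$ uphill; relatedly, your closing assessment that the forward direction of (ii) is ``the main obstacle'' misplaces where the real difficulty of the lemma lies.
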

\begin{proof}
The function $u^f$ is a nonnegative and continuous function over $\cN$ by definition. Furthermore
\begin{equation}\label{claimA0}
u^f(x_1)-u^f(x_2)\le \cD(x_1,x_2)\,,\qquad\text{for any } x_1\,,x_2\in \cN\,,
\end{equation}
implying $u\in X$. Indeed, let assume $u^f(x_1)>0$ (otherwise the claim \eqref{claimA0} is obvious) and let $y\in \supp(f)$ be a point realizing the maximum for $u^f(x_1)$. Then, it holds
\[
u^f(x_1)-u^f(x_2)\le  d(y)-\cD(x_1,y)-[d(y)-\cD(x_2,y)]_+.
\]
If $d(y)-\cD(x_2,y)>0$, \eqref{claimA0} follows by the triangular inequality for $\cD$. Otherwise, $d(y)\le \cD(x_2,y)$ and
\[
u^f(x_1)-u^f(x_2)=d(y)-\cD(x_1,y)\le\cD(x_2,y)-\cD(x_1,y)\le\cD(x_1,x_2)\,.
\]

Next, if $x\in \cN$ is such that $u^f(x)>0$ and $y\in \supp(f)$ realizes the maximum for $u^f(x)$, then $u^f(x)=d(y)-\cD(x,y)\le d(x)$. Therefore $u^f\le d$ over $\cN$.
%Consequently, $u^f=d=0$ on $\partial\cN$.
On the other hand, if $x\in\supp(f)$, then $u^f(x)\ge d(x)$ by definition again, and therefore $u^f=d$ on $\supp(f)$.

Consider now a nonnegative function $u\in X$ satisfying $u=d$ on $\supp(f)$. Take any $x\in\cN$ such that $u^f(x)>0$ and let $y\in \supp(f)$ realize the maximum for $u^f(x)$. Then,
\[
u^f(x)=d(y)-\cD(x,y)=u(y)-\cD(x,y)\le u(x)\,,
\]
and the first claim is totally proved.

To prove (i), we recall that $u^f,d\in C(\cN)$. Therefore, it is sufficient to obtain (i) on $\text{int}(\supp(v^f))$. Let $x_0\in\text{int}(\supp(v^f))$. We claim that there exists $x_1\in \supp(f)$ such that
\begin{equation}\label{claimA1}
d(x_0)=d(x_1)-\cD(x_0,x_1)\,,
\end{equation}
and $u^f(x_0)=d(x_0)$ follows by the definition and the properties of $u^f$.

Since $v^f(x_0)\neq0$, there exists at least one $j\in\mJ$ such that $x_0\in\overline e_j$ and $v^f_j(\pi_j^{-1}(x_0))>0$. For this~$j$ fixed, two different cases can occur that give rise to an iteration procedure leading to \eqref{claimA1}.

{\sl First case}: $S_j(d)=\{\bar t\,\}$ and $T_j(d)=\emptyset$. Then, the restriction of $d$ to $\ce_j$ has a global maximum at $\pi_j(\bar t)$ and $v^f_j(\bar t)=0$ (see Theorem \ref{existence}). Set $t_0 =\pi_j^{-1}(x_0)$. By formulae \eqref{f1} and \eqref{f2}
\[
v^f_j(t_0)=\int_0^{|\bar t -t_0|}f_j\left(t_0+r\,\eta_j(t_0)\,d_j'(t_0)\right)dr.
\]
Since $v^f_j(t_0)>0$, it follows that $\bar t \ne t_0$ and there exists a set $A\subset(0,|\bar t -t_0|)$ with positive measure where the source $f_j$ is a.e. positive. Let $r_1\in A$  and set $x_1=\pi_j(t_0+r_1\,\eta_j(t_0)\,d_j'(t_0))$. Then, $x_1\in\supp(f)\cap e_j$, $x_1\ne x_0$, $x_1\ne\pi_j(\bar t)$ and \eqref{claimA1} follows in this case by \eqref{propdist}.

{\sl Second case}:  $S_j(d)=\emptyset$. Then, $T_j(d)=\{\pi_j^{-1}(x_i)\}$, with $x_i$ the endpoints of $e_j$ such that $i\in\mI_T$ and $j\in\Inc_i^-(d)$, (see \eqref{sing}). By formula \eqref{v}
\beq\label{secondcase}
v^f_j(t_0)=\int_{0}^{\tau_j(t_0)}f_j(t_0+r\,\eta_j(t_0)\,d'(t_0))\,dr+C_{ij}\sum_{k\in \Inc_i^+(d)}v^f_k(\pi_k^{-1}(x_i))\,.
\eeq
Again, since $v^f_j(t_0)>0$, (at least) one of the two terms in the r.h.s. of \eqref{secondcase} has to be positive. If the integral term is positive, we can proceed similarly to the first case to get \eqref{claimA1}. Otherwise there exists $k\in\Inc_i^+(d)$ such that $v^f_k(\pi_k^{-1}(x_i))>0$. If $S_k(d)$ is not empty, we can argue as in the first case along $\overline e_k$ with $x_i$ (endpoint of $e_k$) instead of $x_0$, to conclude that there  exists $x_1\in\text{int}(\supp(f))\cap e_k$ such that $d(x_1)=d(x_i)+\cD(x_i,x_1)$. Furthermore, $d(x_i)=d(x_0)+\cD(x_0,x_i)$ because either $x_0=x_i$ (this is the case for instance when $\tau_j(t_0)=0$) or $x_0\neq x_i$ and one geodesic path from $x_i$ to $\partial\cN$ has to pass through $x_0$ (otherwise $S_j(d)$ should not be empty). Since the distance $d$ is increasing along $\bar e_k$ from $x_i$ to $x_1$, a geodesic path from $x_1$ to $\partial\cN$ has also to pass through $x_i$ and $x_0$, so that : $\cD(x_0,x_1)=\cD(x_0,x_i)+\cD(x_i,x_1)$.  Hence,
\[
d(x_1)=d(x_i)+\cD(x_i,x_1)=d(x_0)+\cD(x_0,x_i)+\cD(x_i,x_1)=d(x_0)+\cD(x_0,x_1)\,,
\]
and the claim \eqref{claimA1} follows once again.

{\sl Iteration procedure}. If there is no $k\in\Inc_i^+(d)$ such that $v^f_k(\pi_k^{-1}(x_i))>0$ and $S_k(d)\neq\emptyset$, we apply the arguments of the second case to $x_i$ and $e_k$, instead of $x_0$ and $e_j$ respectively. Note that $d$ is increasing along $e_k$ from $x_i$ to the other endpoint of $e_k$. Therefore, $\tau_k(\pi_k^{-1}(x_i))>0$ and the first integral term in \eqref{secondcase} is positive iff $\supp(f)\cap e_k\neq\emptyset$. If the latter holds true, we can proceed similarly to the first case to get \eqref{claimA1}. Otherwise, we iterate the procedure. Since $\cN$ is finite and $\supp(f)\neq\emptyset$, after a finite number of steps we arrive necessarily to the source and obtain the claim~\eqref{claimA1}.

It remains to prove (ii). Let assume that $S(d)\subset\supp(f)$. By the previous results, $u^f=d$ on $S(d)$. Let, $x\in\cNint\setminus S(d)$. Using the properties of $u^f$ and \eqref{propdist}, there exists $y\in S(d)$ s.t.

\[
d(x)\ge u^f(x)\ge u^f(y)-\cD(x,y)=d(y)-\cD(x,y)=d(x)\,,
\]
i.e. $u^f(x)=d(x)$.
On the other hand, let assume that $u^f=d$ over $\cN$ and that $S(d)\not\subset\supp(f)$. Take $x$ in $S(d)$ such that $x\notin\supp(f)$. Since $S(d)\cap\partial\cN=\emptyset$, $u^f(x)=d(x)>0$, so that there exists $y\in\supp(f)$ realizing the maximum for $u^f(x)$. Let $z\in\partial\cN$ such that $d(x)=\cD(x,z)$. Then, it holds
\[
d(x)=u^f(x)=d(y)-\cD(x,y)\le\cD(y,z)-\cD(x,y)\le\cD(x,z)=d(x)\,,
\]
giving $d(y)=\cD(y,z)=\cD(x,y)+d(x)$. Since $x\neq y$, the latter identity implies that a geodesic path from $y$ to $\partial\cN$ pass through $x\in S(d)$ and this cannot be true (see the properties of $d$ in Section~\ref{S_3}). Therefore, (ii) is totally proved.
\end{proof}

From Lemma \ref{lemma_uf} it follows that all the nonnegative functions $u\in X$ such that $u=d$ on $\supp(f)$, also satisfy $u=u^f=d$ on $\supp(v^f)$. Therefore, if in addition $u=0$ on $\partial\cN$, these functions are all good candidates to be the first component of the solution of \eqref{LC}--\eqref{TC}, with $u^f$ the minimal one, since  together with $v^f$ they satisfy \eqref{weak_eq}. However, the transmission condition \eqref{TC} is satisfied by each of   $(u,v^f)$ on the transition vertices $x_i$ where $v^f(x_i)>0$, but nothing can be infered for the remaining transition vertices. Next lemma shows that if $(u,v)$ satisfies all the requirements of Definition \ref{def_sol} except the transmission condition, then the $u$-component is identified on $\supp(v^f)$ as equal to $d$, but again nothing can be deduced about the $v$-component. The transmission condition \eqref{TC} has to be henceforth a key tool for the uniqueness result, as we shall see in Theorem \ref{uniquenessMK}.

\begin{lemma}
If $(u,v)$ satisfies the points (i), (ii) and (iii) of Definition \ref{def_sol} and the boundary condition \eqref{BC}, then the couple $(d,v)$ satisfies also \eqref{weak_eq} and $u=d=u^f$ on $\supp(v^f)$.
\end{lemma}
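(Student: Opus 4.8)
The plan is to test the weak formulation \eqref{weak_eq} for $(u,v)$ against the single, carefully chosen function $\psi=d-u$, and to extract both conclusions from the resulting sign analysis. First I would check admissibility: since $u\in(W^{1,\infty}\cap C)(\cN)$ with $u\ge0$, $\eta|\pd u|\le1$ a.e.\ and $u=0$ on $\partial\cN$ by \eqref{BC}, we have $u\in X_0$, so the maximality of $d$ in $X_0$ (recalled before Lemma \ref{lemma_uf}) gives $0\le u\le d$. Consequently $\psi=d-u\in(W^{1,\infty}\cap C)(\cN)$, $\psi\ge0$, and $\psi=0$ on $\partial\cN$, so $\psi$ is admissible in \eqref{weak_eq}, yielding
\[
\int_\cN v\,\eta\,\pd u\,(\pd d-\pd u)\,dx=\int_\cN f\,(d-u)\,dx .
\]

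Next I would estimate the two sides. The right-hand side is nonnegative because $f\ge0$ and $d-u\ge0$. For the left-hand side, only $\{v\ne0\}$ contributes, since the integrand vanishes where $v=0$. There $u$ solves the eikonal equation in the viscosity sense, so by Proposition \ref{3:p1} $\eta|\pd u|=1$ a.e.; likewise $d$ solves the eikonal equation on all of $\cNint$, so $\eta|\pd d|=1$ a.e. Hence on $\{v\ne0\}$ one has, a.e.,
\[
\eta\,\pd u\,\pd d\le\eta\,|\pd u|\,|\pd d|=\tfrac1\eta=\eta\,|\pd u|^2 ,
\]
so the integrand $v\,(\eta\,\pd u\,\pd d-\eta|\pd u|^2)$ is $\le0$ (using $v\ge0$) and the left-hand side is $\le0$. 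Combining, both sides must be zero.

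From the vanishing of the right-hand side, $f\,(d-u)=0$ a.e., and since $f\ge0$, $d-u\ge0$, continuity gives $u=d$ on $\supp(f)$. Then $u$ is a nonnegative function in $X$ coinciding with $d$ on $\supp(f)$, so the minimality of $u^f$ in Lemma \ref{lemma_uf} yields $u\ge u^f$; together with $u\le d$ and $u^f=d$ on $\supp(v^f)$ (Lemma \ref{lemma_uf}(i)), this sandwiches $u=d=u^f$ on $\supp(v^f)$, the second claim. From the vanishing of the left-hand side, the nonpositive integrand vanishes a.e., so on $\{v\ne0\}$ equality holds throughout the displayed chain; since $|\pd u|=|\pd d|=1/\eta$ there, the Cauchy--Schwarz equality forces the scalar derivatives to have the same sign, whence $\pd u=\pd d$ a.e.\ on $\{v\ne0\}$. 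Finally, for any admissible $\psi$ the integrand of \eqref{weak_eq} is supported on $\{v\ne0\}$, where $\pd d=\pd u$, so $\int_\cN v\,\eta\,\pd d\,\pd\psi\,dx=\int_\cN v\,\eta\,\pd u\,\pd\psi\,dx=\int_\cN f\,\psi\,dx$; that is, $(d,v)$ satisfies \eqref{weak_eq}.

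The main obstacle is squeezing both conclusions out of one identity: the choice $\psi=d-u$ must simultaneously force a nonnegative right side and a nonpositive left side, and the delicate point is the equality case of the estimate — I must argue that the a.e.\ vanishing of the nonpositive integrand upgrades the magnitude identity $|\pd u|=|\pd d|$ to the oriented identity $\pd u=\pd d$ on $\{v\ne0\}$, which is precisely what transfers \eqref{weak_eq} from $u$ to $d$.
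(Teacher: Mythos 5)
Your proposal is correct and follows essentially the same route as the paper: testing \eqref{weak_eq} with $\psi=\pm(u-d)$, using $f\ge0$, $u\le d$ and the eikonal equations ($\eta|\pd u|=1$ a.e.\ on $\{v\ne0\}$, $\eta|\pd d|=1$ a.e.) to force both sides of the resulting identity to vanish, and then invoking Lemma \ref{lemma_uf} for $u=d=u^f$ on $\supp(v^f)$. The only cosmetic difference is that the paper extracts $v\,\pd u=v\,\pd d$ a.e.\ via the polarization identity $\pd u\,\pd(u-d)=\frac12\left[|\pd u-\pd d|^2+|\pd u|^2-|\pd d|^2\right]$, whereas you use the product inequality $\pd u\,\pd d\le|\pd u|\,|\pd d|$ together with its equality case; the two manipulations are equivalent.
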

\begin{proof}
The proof will follow by the two identities below
\beq\label{uniq0}
v(x)\pd u(x)=v(x)\pd d(x)\quad\text{a.e. }x\in\cN\qquad\text{and}\qquad u=d\quad\text{on }\supp(f)\,,
\end{equation}
and by Lemma \ref{lemma_uf}. Thanks to the properties of $u$ and $d$ we are allowed to use the test function $\psi=u-d$ in \eqref{weak_eq} for $(u,v)$ to obtain
\beq\label{uniq1}
\int_\cN v(x)\,\eta(x)\,\pd u(x)\pd(u-d)(x)\,dx=\int_{\cN}f(x)(u-d)(x)\,dx \le 0\,,
\eeq
where the negative sign is due to the fact that $f\ge0$ and $u\le d$ in $\cN$. Moreover, $|\pd d(x)|=\f 1 {\eta(x)}$ a.e. $x\in\cN$ and $|\pd u(x)|=\f 1 {\eta(x)}$ a.e. on $\{x\in\cNint\,:\,v(x)\neq0\}$, so that
\[
\begin{split}
\int_\cN v(x)\eta(x)\pd u(x)\pd(u-d)(x)\,dx&=\f12\int_\cN v(x)\eta(x)\left[ |\pd u(x)-\pd d(x)|^2+|\pd u(x)|^2-|\pd d(x)|^2\right]dx\\
&=\f12\int_\cN v(x)\eta(x)|\pd u(x)-\pd d(x)|^2dx\ge 0\,.
\end{split}
\]
The latter together with \eqref{uniq1} give us
\[
\int_\cN v(x)\eta(x)|\pd u(x)-\pd d(x)|^2 dx=\int_{\cN}f(x)(u(x)-d(x))dx=0
\]
and \eqref{uniq0} follows.
\end{proof}

\begin{theorem}\label{uniquenessMK}
If $(u,v)$ is a solution of \eqref{LC}--\eqref{TC} in the sense of Definition \ref{def_sol}, then $u=d=u^f$ on $\supp(v^f)$ and $v=v^f$ on $\Pi_{i=1}^M\bar e_j$. Moreover, if $S(d)\subset\supp(f)$, then $(u,v)=(d,v^f)$ on $\cN\times\Pi_{i=1}^M\bar e_j$.
%\beq\label{uniq3}
%v=v^f\qquad\text{ in }\cN\,.
%\eeq
\end{theorem}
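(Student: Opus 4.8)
The plan is to leverage the two lemmas just proved, which already do most of the work, and then isolate exactly where the transmission condition \eqref{TC} becomes indispensable. From the preceding lemma applied to any solution $(u,v)$ in the sense of Definition \ref{def_sol}, I immediately get $u=d=u^f$ on $\supp(v^f)$; this disposes of the first assertion for free, since a genuine solution satisfies (i)--(iii) and \eqref{BC}. The real content is the identity $v=v^f$ on the whole $\Pi_{j=1}^M\bar e_j$, and here the point is that the previous lemma explicitly warns that the $v$-component is \emph{not} determined without \eqref{TC}. So the first paragraph of the proof records $u=d$ on $\supp(v^f)$, and the rest is devoted to pinning down $v$.

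For $v=v^f$ I would argue edge by edge, using the structure established in Proposition \ref{3:p1}: on each $\bar e_j$ the distance $d$ is either strictly monotone (so $T_j(d)$ is a singleton endpoint, $S_j(d)=\emptyset$) or has a single interior maximum (so $S_j(d)=\{\bar t\}$, $T_j(d)=\emptyset$). Since we now know $v(x)\pd u(x)=v(x)\pd d(x)$ a.e.\ and $u=d$ on $\supp(v^f)$, the flux $v\,\eta\,\pd u$ coincides with $v\,\eta\,\pd d$, and plugging test functions supported inside a single edge into \eqref{weak_eq} turns the conservation law \eqref{LC} into the ODE $(v_j)'(t)=\mp f_j(t)$ exactly as in \eqref{f0}--\eqref{f2}. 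This identifies $v_j$ up to the boundary datum at the ``downhill'' endpoint, i.e.\ up to the value $v_j$ takes at the point of $T_j(d)$. At an interior-maximum edge that datum is $0$ (as shown in Theorem \ref{existence}), so $v_j=v^f_j$ is forced with no freedom; the work is therefore concentrated at the transition vertices.

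The heart of the matter, and the step I expect to be the main obstacle, is showing that the vertex data propagate correctly: at a transition vertex $x_i$ with $j\in\Inc_i^-(u)=\Inc_i^-(d)$, the transmission condition \eqref{TC} prescribes $v_j(\pi_j^{-1}(x_i))=C_{ij}\sum_{k\in\Inc_i^+(d)}v_k(\pi_k^{-1}(x_i))$, which is precisely the recursive definition of $v^f$ in \eqref{v}. To turn this into a proof of $v=v^f$ everywhere, I would run the same finite iteration used in Lemma \ref{lemma_uf}: order the vertices by the geodesic/ ``uphill'' structure of $d$, and induct along the increasing direction of $d$ starting from the singular set $S(d)$, where both $v$ and $v^f$ vanish. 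On each edge the ODE fixes $v_j$ from its value at the higher endpoint; \eqref{TC} then transfers the correct value across each transition vertex to the incident downhill edges; since $\cN$ is finite and contains no loops, after finitely many steps every edge is reached and $v_j=v^f_j$ throughout. The delicate points are (a) justifying that $\Inc_i^-(u)=\Inc_i^-(d)$ so that the coefficients $C_{ij}$ in \eqref{TC} attach to the same edges for $(u,v)$ as for $(d,v^f)$ — this follows because $u=d$ on $\supp(v^f)$ forces $\s_{ij}(u)=\s_{ij}(d)$ wherever $v\neq0$ — and (b) handling vertices where $v(x_i)=0$, at which \eqref{TC} gives no information but the edge-ODE together with the vanishing datum already yields $v_j=v^f_j=0$.

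Finally, the last assertion is essentially a corollary: if $S(d)\subset\supp(f)$, then Lemma \ref{lemma_uf}(ii) gives $u^f=d$ on all of $\cN$, and since $u^f\le u\le d$ for any admissible $u\in X_0$ with $u=d$ on $\supp(f)$, we get $u=d$ on the whole network, not merely on $\supp(v^f)$; combined with $v=v^f$ this yields $(u,v)=(d,v^f)$ on $\cN\times\Pi_{j=1}^M\bar e_j$.
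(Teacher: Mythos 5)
Your plan has the same overall architecture as the paper's proof (the preceding lemmas give $u=d=u^f$ on $\supp(v^f)$; single-edge test functions in \eqref{weak_eq} for $(d,v)$ yield the ODEs behind \eqref{f0}--\eqref{f2}; a finite induction starting from $S(d)$ propagates vertex data downhill; Lemma \ref{lemma_uf}(ii) gives the final claim), but the two points you yourself flag as delicate are exactly where your justifications fail, and they are where the paper works hardest. The base of your induction --- that $v$, and not just $v^f$, vanishes on $S(d)$ --- is cited from Theorem \ref{existence}, which proves it only for $v^f$; for an arbitrary solution it must be proved. At an interior singular point $\bar t\in S_j(d)$ your argument can be repaired: a test function straddling $\bar t$ produces the boundary term $2v_j(\bar t)\psi_{j,n}(\bar t)$, and passing to the limit forces $v_j(\bar t)=0$. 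But at a transition vertex $x_i$ where $d$ attains a local maximum, no single-edge test function and no appeal to \eqref{TC} gives the claim: there $\Inc_i^+(d)=\emptyset$, while \eqref{TC} constrains only the edges in $\Inc_i^-(u)$ and those where $\sigma_{ij}(u)$ is undefined, leaving $v_j(\pi_j^{-1}(x_i))$ completely free on any edge that happens to lie in $\Inc_i^+(u)$. The paper needs test functions supported on the union of \emph{all} edges incident to $x_i$, concentrating at the vertex, to obtain $\sum_{j\in\Inc_i}v_j(\pi_j^{-1}(x_i))=0$ and hence, by positivity, $v(x_i)=0$; such vertex-spanning test functions never appear in your proposal, although you also need them to derive the flux conservation \eqref{cons_flux} for the pair $(d,v)$, which your point (b) implicitly uses.

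The second gap is your point (a): you justify $\Inc_i^-(u)=\Inc_i^-(d)$ by saying that ``$u=d$ on $\supp(v^f)$ forces $\sigma_{ij}(u)=\sigma_{ij}(d)$ wherever $v\neq0$''. The identity $u=d$ on $\supp(v^f)$ controls $\sigma_{ij}(u)$ only along edges where $v^f$ is positive near $x_i$ (note: $v^f$, not $v$). It says nothing about an edge $k\in\Inc_i^+(d)$ with $v^f_k(\pi_k^{-1}(x_i))=0$, on which $\sigma_{ik}(u)$ may be well defined and equal to $-1$. Were such an edge in $\Inc_i^-(u)$, one would have $\Inc_i^-(u)\supsetneq\Inc_i^-(d)$, and the coefficients $C_{ij}$ in \eqref{TC} --- which are attached to the subset $\Inc_i^-(u)$ and sum to $1$ over it --- would no longer be those appearing in formula \eqref{v}; the incoming mass would be split among more edges and $v_j(\pi_j^{-1}(x_i))=v^f_j(\pi_j^{-1}(x_i))$ would fail. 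The paper rules this scenario out not via $u=d$ but via the induction itself combined with positivity: the edges of $\Inc_i^+(d)$ are processed before $x_i$, so $v_k=v^f_k$ on them; when $v^f(x_i)\neq0$, some $k\in\Inc_i^+(u)$ has $v_k(\pi_k^{-1}(x_i))>0$, so \eqref{TC} for $(u,v)$ forces $v_j(\pi_j^{-1}(x_i))>0$ for \emph{every} $j\in\Inc_i^-(u)$, contradicting $v_k(\pi_k^{-1}(x_i))=v^f_k(\pi_k^{-1}(x_i))=0$ on a spurious edge. This exclusion argument --- the paper's case analysis driven by $v^f(x_i)$, not by $v(x_i)$ as in your point (b) --- is the missing idea; without it the induction does not close.
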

\begin{proof}
We shall prove that $v=v^f$ on $\Pi_{i=1}^M\bar e_j$ by several steps, using the fact that $(d,v)$ satisfies \eqref{weak_eq}. First, let us show that $v$ is zero on each maximum point of $d$ over $\cN$, as it is the case for $v^f$ (see Theorem~\ref{existence}). For $j\in\mJ$ such that $S_j(d)=\{\bar t\,\}$ is not empty, let $n\in\N$ be sufficiently large so that $[\bar t-\f1n,\bar t+\f1n]\subset(0,\ell_j)$. Let $\psi_n=(\psi_{j,n})_{j\in\mJ}$ be a sequence of test functions uniformly bounded in $L^\infty(\cN)$, that are zero on each edges  except on $e_j$ and satisfying
\[
\supp(\psi_{j,n})=[\bar t-\f1n,\bar t+\f1n]\,,\qquad \psi_{j,n}(\bar t)=1\,.
\]
Then, taking into account the monotonicity property of $d$, \eqref{weak_eq} for $(d,v)$ gives us
\[
\begin{split}
\int_{\bar t-\f1n}^{\bar t+\f1n}f_j(t)\,\psi_{j,n}(t)\,dt&=\int_{\bar t-\f1n}^{\bar t+\f1n}v_j(t)\,\eta_j(t)d_j'(t)\,\psi_{j,n}'(t)\,dt
=\int_{\bar t-\f1n}^{\bar t}v_j(t)\,\psi_{j,n}'(t)\,dt-\int_{\bar t}^{\bar t+\f1n}v_j(t)\,\psi_{j,n}'(t)\,dt\\
&=2v_j(\bar t)\,\psi_{j,n}(\bar t)-\int_{\bar t-\f1n}^{\bar t}v_j'(t)\,\psi_{j,n}(t)\,dt+\int_{\bar t}^{\bar t+\f1n}v_j'(t)\,\psi_{j,n}(t)\,dt\,.
\end{split}
\]
Passing to the limit as $n\to\infty$ we obtain $v_j(\bar t)=0$. Hence,
\beq\label{uniq4}
(v_j)_{j\in \mJ}\equiv0\qquad\text{ in }\qquad\prod_{j=1}^MS_j(d)\,,
\eeq

Let now $x_i\in\cV$, $i\in\mI_T$, be a maximum point for $d$ in $\cN$. The argument is similar to the previous one, except that we have to take into account all the edges incident to $x_i$. Assume, without loss of generality, that $\pi_j^{-1}(x_i)=\ell_j$ for all $j\in\Inc_i$. Hence, $\eta_j(t)d'_j(t)=1$ for $t\in (0,\ell_j)$ and all $j\in\Inc_i$. Let $n\in\N$ be sufficiently large so that $\ell_j-\f1n>0$ for all $j\in\Inc_i$ and consider a sequence of test functions $\psi_n$ uniformly bounded in $L^\infty(\cN)$ and satisfying
\[
\supp((\psi_{j,n})_{j\in\mJ})=\prod_{j\in\Inc_i}[\ell_j-\f1n,\ell_j]\,,\qquad \psi_n(x_i)=1\,.
\]
Then, \eqref{weak_eq} for $(d,v)$ becomes
\[
\begin{split}
\sum_{j\in\Inc_i}\int_{\ell_j-\f1n}^{\ell_j}f_j(t)\,\psi_{j,n}(t)\,dt=\sum_{j\in\Inc_i}\int_{\ell_j-\f1n}^{\ell_j}v_j(t)\,\psi_{j,n}'(t)\,dt
=\sum_{j\in\Inc_i} v_j(\ell_j)-\sum_{j\in\Inc_i}\int_{\ell_j-\f1n}^{\ell_j}v_j'(t)\,\psi_{j,n}(t)\,dt.
\end{split}
\]
Passing to the limit as $n\to\infty$ we obtain $\sum_{j\in\Inc_i} v_j(\ell_j)=0$ and, since $v\ge0$ in $\cN$, $v_j(\ell_j)=0$ for all $j\in\Inc_i$, i.e. $v(x_i)=0$.

In order to prove that $v=v^f$ everywhere else in $\cN$, we introduce the following partition of $\cE$
%$\cE_0=\cE_0'\cup\cE_0'',\,\cE_1,\dots,\cE_m,\dots$ of $\cE= \{e_j\}_{j\in \mJ}$ as follows
\[
\begin{split}
\cE_0=\cE_0'\cup\cE_0''\quad\text{and}\quad
\cE_m:=\{e_j\in\cE\,:\,\exists\ i\in \mI_T\text{ and }e_k\in \cE_{m-1} \text{ s.t. } j,k\in\Inc_i\},\quad m=1,2,\dots\,.
\end{split}
\]
where
\[
\cE_0':=\{e_j\in\cE\,:\,S_j(d)\neq \emptyset\}\quad\text{and}\quad \cE_0'':=\{e_j\in\cE\,:\,\text{one endpoint is a maximum point of }d\}\,.
\]
$\cE_0$ contains all the edges $e_j$ such that $\bar e_j\cap S(d)\neq\emptyset$.
In particular, if the edge $e_j$ has two boundary vertices as endpoints, then  $e_j\in\cE_0'$.  Furthermore, the two sets giving $\cE_0$ are disjoints and they can not be both empty. Therefore the partition is well defined and finite.

Let assume that there exists $e_j\in \cE_0'$. Let  $S_j(d)=\{\bar t\}$ and take $t\in(0,\bar t\,)$. Choose $n\in\N$ sufficiently large so that $[t-\f1n,\bar t+\f1n]\subset(0,\ell_j)$ and a sequence $\psi_n$ of test functions uniformly bounded in $L^\infty(\cN)$, that are zero on each edges of the network except on $e_j$, with $\supp(\psi_{j,n})=[t-\f1n, \bar t+\f1n]$ and $\psi_{j,n}=1$ on  $[t,\bar t]$.
Proceeding as above and taking into account \eqref{uniq4}, \eqref{weak_eq} for $(d,v)$ gives us
\[
\begin{split}
\int_{t-\f1n}^{\bar t+\f1n}f_j(r)\,\psi_{j,n}(r)\,dr&=\int_{t-\f1n}^{\bar t}v_j(r)\,\psi_{j,n}'(r)\,dr-\int_{\bar t}^{\bar t+\f1n}v_j(r)\,\psi_{j,n}'(r)\,dr\\
&=-\int_{t-\f1n}^{\bar t}v_j'(r)\,\psi_{j,n}(r)\,dr+\int_{\bar t}^{\bar t+\f1n}v_j'(r)\,\psi_{j,n}(r)\,dr\\
&=-\int_{t-\f1n}^tv_j'(r)\,\psi_{j,n}(r)\,dr+v_j(t)+\int_{\bar t}^{\bar t+\f1n}v_j'(r)\,\psi_{j,n}(r)\,dr\,.
\end{split}
\]
Passing to the limit as $n\to\infty$, we get
\[
v_j(t)=\int_t^{\bar t}f_j(r)\,dr\,,
\]
i.e. $v_j$ is given by \eqref{f1} for $t\in (0, \bar t]$ as $v_j^f$. Formula \eqref{f2} for $t\in [\bar t,\ell_j)$ can be obtained similarly. Hence $v=v^f$ holds on each $e_j\in\cE_0'$ and by continuity on $\bar e_j$.

Let assume now that there exists $e_j\in\cE_0''$ and let $x_i\in\cV$, $i\in\mI_T$, be the endpoint of $e_j$ where $d$ has a maximum. Then, $e_k\in\cE_0''$ for all $k\in\Inc_i$. Assume, without loss of generality, that $\pi_k^{-1}(x_i)=\ell_k$ for all $k\in\Inc_i$. Let $t\in(0,\ell_j\,)$ be fixed and choose a sequence $\psi_n$ of test functions uniformly bounded in $L^\infty(\cN)$, such that $\supp(\psi_n)\subset\cup_{k\in\Inc_i }\bar e_k$ and for $n$ sufficiently large satisfies on~$\bar e_j$
\beq\label{seq1}
\psi_{j,n}(r)=0\quad\text{for}\quad r\in[0,t-\f1n]\,,\qquad \psi_{j,n}(r)=1\quad\text{for } r\in[t,\ell_j]\,,
\eeq
while on $\bar e_k$, $k\in\Inc_i$, $k\ne j$,
\beq\label{seq2}
\psi_{k,n}(r)=0\quad\text{for}\quad r\in[0,\ell_k-\f1n]\,,\qquad\psi_{k,n}(\ell_k)=1\,.
\eeq
Observing that $\eta_k(r)d_k'(r)=1$ on $(0,\ell_k)$ for all $k\in\Inc_i$, by \eqref{weak_eq} for $(d,v)$ we obtain
\[
\begin{split}
\int_{t-\f1n}^{\ell_j}&f_j(r)\,\psi_{j,n}(r)\,dr +\sum_{k\in\Inc_i, k\neq j }\int_{\ell_k-\f1n}^{\ell_k} f_k(r)\,\psi_{k,n}(r)\,dr\\
&=\int_{t-\f1n}^{\ell_j}v_j(r)\,\psi_{j,n}'(r)\,dr+\sum_{k\in\Inc_i, k\neq j}\int_{\ell_k-\f1n}^{\ell_k}v_k(r)\,\psi_{k,n}'(r)\,dr\\
&=v_j(t)-\int_{t-\f1n}^tv_j'(r)\,\psi_{j,n}(r)\,dr+\sum_{k\in\Inc_i, k\neq j}v_k(\ell_k)-\sum_{k\in\Inc_i, k\neq j}\int_{\ell_k-\f1n}^{\ell_k}   v_k'(r)\,\psi_{k,n}(r)\,dr\,.
\end{split}
\]
Passing again to the limit as $n\to\infty$ and recalling that $v(x_i)=v^f(x_i)=0$, we get
\[
v_j(t)=\int_t^{\ell_j}f_j(r)\,dr\,,
\]
i.e.  $v_j$ is given by formula \eqref{f0} in $[0,\ell_j]$ as $v_j^f$ and $v=v^f$ follows on $\bar e_j$, for each $e_j\in\cE_0''$.

We are now ready to iterate the procedure. Assume that $\cE_1$ is not empty, otherwise the proof is complete, and fix $e_j\in\cE_1$. By the definition of the partition of $\cE$ and the previous steps, there exists $i\in\mI_T$ such that $j\in\Inc_i$ and
\beq\label{uniq5}
v_k=v^f_k\text{ on }\bar e_k\text{ for all }k\in\Inc_i\text{ such that }e_k\in\cE_0\,.
\eeq
Moreover, any such index $k$ in \eqref{uniq5} belongs to $\Inc_i^+(d)$. Indeed, either $e_k\in \cE_0'$ so that $S_k(d)\ne\emptyset$, or $e_k\in\cE_0''$ and $x_i$ is not a maximum point of $d$ on $\cN$, otherwise $e_j$ should also belong to $\cE_0''$. On the other hand, $j\in\Inc_i^-(d)$ since $\cE\setminus\cE_0$ does not contain any singular point of $d$  and $i\in\cI_T$. Obviously, the same holds true for all $e_j\in\cE_1$ such that $j\in\Inc_i$.

Assume again that $\pi_k^{-1}(x_i)=\ell_k$ for all $k\in\Inc_i$. Let $t\in(0,\ell_j)$ be fixed and choose as before a sequence $\psi_n$ of test functions uniformly bounded in $L^\infty(\cN)$, such that $\supp(\psi_n)\subset\cup_{k\in\Inc_i }\bar e_k$ and satisfying \eqref{seq1} and \eqref{seq2}. Observing that, for $n$ large enough, $\eta_k(r)d_k'(r)=-1$ on the support of $\psi_{k,n}$ for all $k\in\Inc_i^+(d)$, while $\eta_k(r)d_k'(r)=1$ for all $k\in\Inc_i^-(d)$, and proceeding as before,  \eqref{weak_eq} for $(d,v)$ gives
\[
\begin{split}
\int_{t-\f1n}^{\ell_j}&f_j(r)\,\psi_{j,n}(r)\,dr+\sum_{k\in\Inc_i^-(d),k\neq j}\int_{\ell_k-\f1n}^{\ell_k}f_k(r)\,\psi_{k,n}(r)\,dr
+\sum_{k\in\Inc_i^+(d)}\int_{\ell_k-\f1n}^{\ell_k}f_k(r)\,\psi_{k,n}(r)\,dr\\
&=\int_{t-\f1n}^{\ell_j}v_j(r)\,\psi_{j,n}'(r)\,dr+\sum_{k\in\Inc_i^-(d),k\neq j}\int_{\ell_k-\f1n}^{\ell_k}v_k(r)\,\psi_{k,n}'(r)\,dr-\sum_{k\in\Inc_i^+(d)}\int_{\ell_k-\f1n}^{\ell_k}v_k(r)\,\psi_{k,n}'(r)\,dr\\
&=v_j(t)-\int_{t-\f1n}^tv_j'(r)\,\psi_{j,n}(r)\,dr+\sum_{k\in\Inc_i^-(d),k\neq j}v_k(\ell_k)-\sum_{k\in\Inc_i^-(d),k\neq j}\int_{\ell_k-\f1n}^{\ell_k}v_k'(r)\,\psi_{k,n}(r)\,dr\\
&\quad-\sum_{k\in\Inc_i^+(d)}v_k(\ell_k)+\sum_{k\in\Inc_i^+(d)}\int_{\ell_k-\f1n}^{\ell_k}v_k'(r)\,\psi_{k,n}(r)\,dr\,.
\end{split}
\]
Passing again to the limit as $n\to\infty$, we get
\beq\label{uniq6}
\int_t^{\ell_j}f_j(r)\,dr=v_j(t)+\sum_{k\in\Inc_i^-(d),k\neq j}v_k(\ell_k)-\sum_{k\in\Inc_i^+(d)}v_k(\ell_k)\,,\qquad t\in(0,\ell_j)\,.
\eeq
Recalling that $j\in\Inc_i^-(d)$, the continuity of $v_j$ allows us to pass to the limit $t\to\ell_j$ in \eqref{uniq6} to have
\beq\label{uniq7}
\sum_{k\in\Inc_i^-(d)}v_k(\ell_k)=\sum_{k\in\Inc_i^+(d)}v_k(\ell_k)\,,
\eeq
i.e. the conservation of the flux \eqref{cons_flux} for $(d,v)$ in $x_i$. Plugging \eqref{uniq7} into \eqref{uniq6}, the latter becomes
\[
v_j(t)=\int_t^{\ell_j}f_j(r)\,dr+v_j(\ell_j)\,,\qquad t\in(0,\ell_j)\,.
\]
It remains to prove that $v_j(\ell_j)=v_j^f(\ell_j)$, which implies that $v_j=v_j^f$ on $\bar e_j$ by formula \eqref{f0}. Recall that $x_i=\pi_k(\ell_k)$ for all $k\in\Inc_i$. We distinguish the following two cases.

$(i)$ If $v^f(x_i)=0$, then $v^f_k(\ell_k)=0$ for all $k\in\Inc_i$ and in particular for all $k\in\Inc_i^+(d)$. By \eqref{uniq5},  the conservation of the flux \eqref{uniq7} and the positivity of $v$, it follows that $v(x_i)=0$ too.

$(ii)$ If $v^f(x_i)\neq0$, taking into account the transmission condition for $(d,v^f)$, there exists (at least one) $k\in\Inc_i^+(d)$ such that $v_k^f(x_i)>0$, while $v_j^f(x_i)>0$ for all $j\in\Inc_i^-(d)$. For those indices, $\sigma_{ij}(u)$ and $\sigma_{ik}(u)$ are well defined with $\sigma_{ij}(u)=\sigma_{ij}(d)$ and $\sigma_{ik}(u)=\sigma_{ik}(d)$, since $u=d$ on $\supp(v^f)$. Hence, $\{k\in\Inc_i^+(d)\,:\,v_k^f(x_i)>0\}\subseteq\Inc_i^+(u)$ and $\Inc_i^-(d)\subseteq\Inc_i^-(u)$. Consequently, by \eqref{uniq5} and the transmission condition \eqref{TC} for $(u,v)$, it follows that $v_j(x_i)>0$ for all $j\in\Inc_i^-(u)$. Moreover, if there exists $k\in\Inc_i^+(d)$ such that $v_k^f(x_i)=0$, then $v_k(x_i)=0$ and either $\sigma_{ik}(u)$ is not defined or $\sigma_{ik}(u)=1$, i.e. $k\in\Inc_i^+(u)$ (see \eqref{TC} again). Resuming, $\{k\in\Inc_i^+(d)\,:\,v_k^f(x_i)>0\}\subseteq\Inc_i^+(u)\subseteq\Inc_i^+(d)$, while $\Inc_i^-(d)=\Inc_i^-(u)$ and the transmission condition for $(u,v)$ again gives us $v_j(\ell_j)=v_j^f(\ell_j)$.

Iterating similar arguments on the remaining $\cE_m$, after a finite number of step we get the claim.
\end{proof}

\begin{corollary}\label{invariant}
If $(u,v)$ is a solution of \eqref{LC}--\eqref{TC} in the sense of Definition \ref{def_sol}, then for all $i\in\mI_T$ such that $v(x_i)\ne0$, the sets $\Inc_i^\pm(u)$ are not empty and satisfy
\[
\{j\in\Inc_i^+(d)\,:\,v_j(x_i)>0\}\subseteq\Inc_i^+(u)\subseteq\Inc_i^+(d)\qquad\text{and}\qquad\Inc_i^-(u)=\Inc_i^-(d)\,.
\]
\end{corollary}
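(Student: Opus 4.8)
\textbf{Proof plan for Corollary \ref{invariant}.}

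The plan is to extract the stated set inclusions directly from the machinery already assembled in the proof of Theorem \ref{uniquenessMK}, since that proof essentially establishes these relations as an intermediate step. First I would fix a transition vertex $x_i$, $i\in\mI_T$, with $v(x_i)\ne0$. By Theorem \ref{uniquenessMK} we know $v=v^f$ on $\prod_{j=1}^M\bar e_j$, so $v(x_i)\ne0$ is equivalent to $v^f(x_i)\ne0$, placing us in case $(ii)$ of that proof. The key observation to recall is that $u=d=u^f$ on $\supp(v^f)$, which guarantees that for every edge $j$ incident to $x_i$ with $v_j(x_i)>0$, the slope $\sigma_{ij}(u)$ is well defined and agrees with $\sigma_{ij}(d)$; this is precisely the reason the labelling of edges as ``ingoing'' ($\Inc_i^+$) or ``outgoing'' ($\Inc_i^-$) can be transferred between $u$ and $d$.

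Next I would reproduce the chain of inclusions established in case $(ii)$. Since $v^f(x_i)\ne0$, the transmission condition \eqref{TC} for $(d,v^f)$ forces the existence of at least one $k\in\Inc_i^+(d)$ with $v^f_k(x_i)>0$, and simultaneously $v^f_j(x_i)>0$ for every $j\in\Inc_i^-(d)$. For each such index the agreement $\sigma(u)=\sigma(d)$ gives $\{k\in\Inc_i^+(d):v_k^f(x_i)>0\}\subseteq\Inc_i^+(u)$ and $\Inc_i^-(d)\subseteq\Inc_i^-(u)$. For the reverse direction I would invoke the first line of \eqref{TC}: if $k\in\Inc_i^+(d)$ has $v_k^f(x_i)=0$, then $v_k(x_i)=0$, so either $\sigma_{ik}(u)$ is undefined or $\sigma_{ik}(u)=1$, meaning $k\notin\Inc_i^-(u)$; combined with the fact that $\cN\setminus S(d)$ carries no singular points, this pins down $\Inc_i^-(u)=\Inc_i^-(d)$ and squeezes $\Inc_i^+(u)$ between $\{k\in\Inc_i^+(d):v_k^f(x_i)>0\}$ and $\Inc_i^+(d)$, which are exactly the stated inclusions.

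Finally I would verify non-emptiness of both $\Inc_i^\pm(u)$. Non-emptiness of $\Inc_i^-(u)=\Inc_i^-(d)$ holds because $x_i$ is a transition vertex that is not a maximum point of $d$ (a maximum would force $v^f(x_i)=0$ by Theorem \ref{existence}), so by the structure of $d$ there is at least one outgoing edge, giving $N_i^-(d)\ge1$. Non-emptiness of $\Inc_i^+(u)$ follows from the lower inclusion, since the transmission condition already produced a $k\in\Inc_i^+(d)$ with $v_k^f(x_i)>0$, and that $k$ lies in $\Inc_i^+(u)$. I do not anticipate a genuine obstacle here: the corollary is a repackaging of case $(ii)$ of the uniqueness proof, so the only care needed is bookkeeping—making sure the equivalence $v(x_i)\ne0\Leftrightarrow v^f(x_i)\ne0$ is invoked cleanly and that the transfer of slope labels between $u$ and $d$ is justified everywhere on $\supp(v^f)$ rather than merely on its interior.
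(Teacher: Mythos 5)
Your proposal is correct and matches the paper's approach: the paper gives no separate proof of Corollary \ref{invariant}, since it is read off directly from case $(ii)$ at the end of the proof of Theorem \ref{uniquenessMK}, which establishes exactly the inclusions $\{k\in\Inc_i^+(d)\,:\,v_k^f(x_i)>0\}\subseteq\Inc_i^+(u)\subseteq\Inc_i^+(d)$ and $\Inc_i^-(u)=\Inc_i^-(d)$ via the slope transfer $u=d$ on $\supp(v^f)$ and the transmission condition \eqref{TC}. Your reconstruction of that argument, together with the non-emptiness bookkeeping, is exactly what the paper intends.
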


\begin{remark}
It is possible to consider in the model an additional source  term  located at some of  the transition vertices of the networks, $g:\{x_i\}_{i\in \mI_T} \to [0,\infty)$. Since the additional sand poured  by the source $g$ only influences the total mass rolling in the vertices, the sandpiles differential model is given by the same Monge-Kantorovich system discussed above but with the transition condition \eqref{TC} replaced by
\[
\begin{array}{ll}
v_j(\pi_j^{-1}(x_i))=0\,,&\text{if }\sigma_{ij}(u)\text{ is not defined and } j\in\Inc_i\,,\\[4pt]
v_j(\pi_j^{-1}(x_i))=K_{ij}\,g(x_i)+C_{ij}\!\sum_{k\in \Inc_i^+(u)}v_k(\pi_k^{-1}(x_i))\,,&\text{if }\sigma_{ij}(u)\text{ is defined and } j\in \Inc_i^-(u)\,,
\end{array}
\]
with again $\sum_{j\in \Inc_i^-(u)}C_{ij}=\sum_{j\in \Inc_i^-(u)}K_{ij}=1$. The existence and uniqueness results still holds true with formula \eqref{v} replaced by
\[
v^f_j (t)=\int_0^{\t_j(t)}f_j\left(t+r\,\eta_j(t)\,d_j'(t)\right)dr+\Big(K_{ij}\,g(x_i)+C_{ij}\!\!\sum_{k\in \Inc_i^+(d)}v^f_k(\pi_k^{-1}(x_i))\Big)\carat_{T_j(d)}(P_j(t)),\quad t \in [0,\ell_j].
\]
\end{remark}
%%%%%%%%%%%%%%%%%%%%%%%%%%%%%
%  Approximation scheme     %
%%%%%%%%%%%%%%%%%%%%%%%%%%%%%
\section{An approximation scheme for the sandpiles  problem}\label{S_6}
In this section we consider an approximation scheme to compute the solution $(d,v^f)$ in \eqref{dist_bd} and~\eqref{v}.

Given the positive integers $M_j$, $j\in\mJ$, we define on each parameter's interval $[0,\ell_j]$ the locally uniform partition $t_{m}^j:=m\,h_j$, $m=0,\dots,M_j+1$, with space step $h_j:= \ell_j/(M_j+1)$. The corresponding spatial grid on~$e_j$ is then $\mG_j:=\{x_{m}^j=\pi_j(t_{m}^j)\,;\,m=1,\dots,M_j\}$, on $\bar e_j$ is $\bar\mG_j:=\{x_{m}^j=\pi_j(t_{m}^j)\,;\,m=0,\dots,M_j+1\}$, while $\mG^h :=\{\mG_j\,;\,j\in\mJ\}\cup\cV$, with $h:=\max_{j\in\mJ}\{h_j\}$, is the grid on $\cN$. We also set $\partial \mG^h:=\partial \cN$.

Next, for $x_1,x_2\in\mG^h$, we say that $x_1$ and $x_2$ are adjacent and we write  $x_1\sim  x_2$ if there exists $j\in\mJ$ such that $\text{Dist}(x_1,x_2)=h_j$. We call a discrete path $\cP^h(x,y)$ connecting $x\in\mG^h$ to $y\in\mG^h$ any finite set $\{x_0=x,x_1,\dots, x_n=y\}$ with $n\ge1$, $x_m\in\mG^h$ and  $x_m\sim x_{m+1}$, $m=0,\dots,n-1$.

In order to compute an approximation $d^h$ of the distance $d$ in \eqref{dist_bd}, we consider the following finite difference scheme for the eikonal equation \eqref{eik} with homogeneous Dirichlet boundary condition
\begin{equation}\label{HJscheme}
\left\{
\begin{array}{ll}
\displaystyle{\max_{y\in\mG^h\,:\,y\sim x }\left(-\f{u^h(y)-u^h(x)}{\text{Dist}(x,y)}\right)-\f1{\eta(x)}=0}\qquad& x\in\mG^h\setminus\partial \mG^h\,,
\\[10pt]
u^h(x)=0& x\in\partial\mG^h.\\[6pt]
\end{array}
\right.
\end{equation}
It is easily seen that problem \eqref{HJscheme} admits a unique solution $\delta^h:\mG^h\to\R$ given by
\begin{equation}\label{geodistD}
\delta^h(x)=\min\left(\sum_{m=0}^{n-1} \f 1 {\eta (x_m)}\text{Dist}\,(x_m,x_{m+1})\right),\qquad x\in\mG^h\,,
\end{equation}
where the minimum in \eqref{geodistD} is taken over all discrete path $\cP^h(x,y)$ and all $y\in\partial \mG^h$. Indeed, a function $u^h:\mG^h\to\R$ that is zero on $\partial\mG^h$, satisfies the discrete eikonal equation in \eqref{HJscheme} iff
\beq\label{HJscheme2}
u^h(x)\le\f1{\eta(x)}\text{Dist}(x,w)+u^h(w)\,,\qquad\forall w\in\mG^h\,,w\sim x\,,
\eeq
and there exists at least one $y(x)\sim x$ for which the equality in \eqref{HJscheme2} holds true. The discrete function in \eqref{geodistD} satisfies the previous requirements and it is actually the unique solution. To prove the uniqueness claim,   denote hereafter $z(x)$ the grid point which realizes the equality in \eqref{HJscheme2} for $\delta^h$. Then, if $x\in\mG^h\setminus\partial\mG^h$ is such that $y(x)\sim x$ belongs to $\partial\mG^h\cap\bar e_j$, $j\in\mJ$, it follows that $z(x)\in\partial\mG^h\cap\bar e_j$ as well and $u^h(x)=\delta^h(x)$ since
\beq\label{HJscheme3}
u^h(x)=\f1{\eta(x)}\text{Dist}(x,y(x))=\f1{\eta(x)}\,h_j\ge\delta^h(x)=\f1{\eta(x)}\text{Dist}(x,z(x))+\delta^h(z(x))=\f1{\eta(x)}\,h_j+\delta^h(z(x))\,.
\eeq
Now, let assume that $\sigma:=\max\{|u^h(x)-\delta^h(x)|\,;\,x\in\mG^h\}>0$ and let $A^\pm:=\{x\in\mG^h:u^h(x)-\delta^h(x)=\pm\sigma\}$. If $x\in A^+$, by \eqref{HJscheme3}, $z(x)\notin\partial\mG^h$, but $z(x)\in A^+$ since
\[
-\sigma=\delta^h(x)-u^h(x)\ge\delta^h(z(x))-u^h(z(x))\ge-\sigma\,.
\]
Consequently, $u^h(x)-u^h(z(x))=\delta^h(x)-\delta^h(z(x))>0$, i.e. $u^h(x)>u^h(z(x))$. The latter implies that $u^h$ can not attain the minimum over the finite set $A^+$. Hence $A^+=\emptyset$. Changing the role between $u^h$ and $\delta^h$ it can be proved that $A^-=\emptyset$, so that $u^h\equiv\delta^h$ follows by contradiction.

It is also worth noticing that for any $x\in\mG^h$ there may exist (at most) one grid point $y$ adjacent to~$x$ such that $\delta^h(x)=\delta^h(y)$. Whenever the latter holds, we extend the grid $\mG^h$ adding the new grid points $(x+y)/2$ and defining $\delta^h((x+y)/2):=\delta^h(x)+h/2$. The approximation $d^h$ of~$d$ we shall consider here is then the continuous linear interpolation over $\cN$ of the values attained by~$\delta^h$ on the enlarged grid. For the sake of simplicity, we shall use the same notations as before for the enlarged grid and its grids points but we replace the uniform  space step $h_j$ on $e_j$ with the non-uniform space steps
\[
h_m^j:=t_{m+1}^j-t_m^j\,,\qquad m=0,\dots M_j\,.
\]

Thanks to the previous procedure, the $d^h$ function shares the same nice properties of the distance \eqref{dist_bd} discussed in Section \ref{S_3}. Indeed, let define the forward difference quotients of $d^h$ on each edge $e_j$
\[
\Delta_m^j(d^h):=\f{d^h_j(t_{m+1}^j)-d^h_j(t_m^j)}{h_m^j}\,,\qquad j\in\mJ\,,\ m=0,\dots,M_j\,,
\]
the corresponding signs $\varsigma_m^j:={\rm sgn}[\Delta_m^j(d^h)]$ and the set of critical points of $d^h$ inside the edge $e_j$
\[
S_j(d^h):=\{t_m^j\,:\, m\in\{1, \dots, M_j\} \text{ and }\varsigma_m^j\neq\varsigma_{m-1}^j\}\,.
\]
Then, with the same definition \eqref{slope}, for the slope of $d^h$ at the vertices $x_i$, and \eqref{incpm}, it is straightforward to prove that $d^h$, $\s_{ij}(d^h)$ and $S_j(d^h)$ satisfy properties (i) to (iv) in Proposition \ref{3:p1}. In particular, the critical points of $d^h$ are maximum points and they are attained uniquely in grid points.

Next, in order to obtain an approximation of formula \eqref{v}, using the same definition \eqref{Sigmaj}-\eqref{sing} for the projection set in $\bar e_j$ of $d^h$, we set, for $m=0,\dots,M_j+1$,
\[
 \t^h_j(t_m^j):=\min\{p\in\N\,:\, t_m^j+\sum_{k=0}^{p-1} h_{m+k}^j\in\gS_j(d^h)\}\,,\qquad\text{if } \varsigma_m^j=1\,,
\]
\[
 \t^h_j(t_m^j):=\min\{p\in\N\,:\, t_m^j-\sum_{k=1}^p h_{m-k}^j\in\gS_j(d^h)\}\,,\qquad\text{if } \varsigma_m^j=-1\,,
\]
and we define the projection of $t_m^j$ onto $\gS_j(d^h)$ as
\[
P^h_j(t_m^j):=t_m^j+\varsigma_m^j\sum_{k=(1-\varsigma_m^j)/2}^{ \t^h_j(t_m^j)}h_{m+\varsigma_m^jk}^j\ ,\qquad m=0,\dots,M_j+1\,.
\]
With the latter we can finally approximate the function $v^f$ over $\Pi_{j=1}^M\bar \mG_j$ by means of
\begin{equation}\label{v_h1}
v^{f,h}_j (t_m^j)=\sum_{p=0}^{\t^h_j(t_m^j)-1}\f{h_{m+p}^j}2[f_j(t_{m+p}^j)+f_j(t_{m+p+1}^j)]
+\Big(C_{ij}\!\!\sum_{k\in\Inc_i^+(d^h)}v^{f,h}_k(\pi_k^{-1}(x_i))\Big)\carat_{T_j(d^h) }(P^h_j(t_m^j))\,,
\end{equation}
if $\varsigma_m^j=1$, and
\begin{equation}\label{v_h2}
v^{f,h}_j (t_m^j)=\sum_{p=1}^{\t^h_j(t_m^j)}\f{h_{m-p}^j}2[f_j(t_{m-p}^j)+f_j(t_{m-p+1}^j)]
+\Big(C_{ij}\!\!\sum_{k\in\Inc_i^+(d^h)}v^{f,h}_k(\pi_k^{-1}(x_i))\Big)\carat_{T_j(d^h) }(P^h_j(t_m^j))\,,
\end{equation}
if $\varsigma_m^j=-1$. The last step is to define $v^{f,h}$ as the continuous linear interpolation of the values $v^{f,h}_j(t_m^j)$ over $\Pi_{i=1}^M\bar e_j$.
\begin{theorem}\label{convergence}
$(d^h, v^{f,h})$ converges as $h\to 0$ to the solution $(d,v^f)$  of the sand piles problem, uniformly in $\cN\times\Pi_{j=1}^M \overline e_j$.
\end{theorem}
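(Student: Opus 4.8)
The plan is to prove the two components of the convergence separately: first $d^h\to d$ uniformly on $\cN$, and then, building on this, $v^{f,h}\to v^f$ uniformly on $\prod_{j=1}^M\bar e_j$. For the first part I would exploit the explicit representation \eqref{geodistD} of the discrete solution. The key uniform estimate is that every $\delta^h$, and hence its linear interpolant $d^h$, inherits the bound $\eta(x)|\pd d^h(x)|\le1$, so that $(d^h)_h$ is equi-Lipschitz on $\cN$ with constant $1/\min_{j}\min_{\bar e_j}\eta_j$, independent of $h$. I would then show pointwise convergence of the discrete weighted length in \eqref{geodistD} to the continuous one \eqref{dist}: a near-optimal continuous geodesic can be tracked by a discrete path and vice versa, and by the uniform continuity of $\eta$ the left-endpoint Riemann sums $\sum_m \eta(x_m)^{-1}\,\text{Dist}(x_m,x_{m+1})$ converge to $\int\eta^{-1}\,ds$ along the path as $h\to0$. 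Combining equicontinuity with pointwise convergence (and the fact that $d^h=0$ on $\partial\cN$ for every $h$) yields, via Arzel\`a--Ascoli and the uniqueness in Proposition~\ref{prop_eik1}, the uniform convergence $d^h\to d$.

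The crux is then to transfer this to the singular structure on which formula \eqref{v} rests. The hard part will be proving the stability of the discrete combinatorial data. Since $|\pd_j d|=1/\eta_j$ is bounded away from zero, each slope $\s_{ij}(d)=\pm1$ is nondegenerate; I would use $d^h\to d$ together with this strict sign to conclude that, for $h$ small, the discrete signs $\varsigma^j_m$ adjacent to each vertex coincide with $\s_{ij}(d)$, so that $\Inc_i^\pm(d^h)=\Inc_i^\pm(d)$ and $T_j(d^h)\to T_j(d)$, while the unique discrete maximum on each edge of $\cE_0'$ converges to the location of $S_j(d)$. This forces the discrete projection data $\tau^h_j$ and $P^h_j$ to converge to $\tau_j$ and $P_j$, and $\carat_{T_j(d^h)}(P^h_j(\cdot))\to\carat_{T_j(d)}(P_j(\cdot))$ off the finitely many singular points.

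With the structure stabilized, I would establish $v^{f,h}\to v^f$ by induction on the partition $\cE_0,\cE_1,\dots$ of $\cE$ employed in Theorem~\ref{uniquenessMK}. On the base level $\cE_0$ the recursion term vanishes and \eqref{v_h1}--\eqref{v_h2} reduce to trapezoidal approximations of the integrals in \eqref{f0}, \eqref{f1}, \eqref{f2}; these, together with $\tau^h_j\to\tau_j$, give nodal convergence of $v^{f,h}_j$ to $v^f_j$. Passing from $\cE_{m-1}$ to $\cE_m$, the incoming mass $C_{ij}\sum_{k\in\Inc_i^+(d^h)}v^{f,h}_k(\pi_k^{-1}(x_i))$ converges by the inductive hypothesis and the identity $\Inc_i^+(d^h)=\Inc_i^+(d)$, closing the induction after finitely many steps. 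Finally, since $v^f\in W^{1,\infty}(\cN)$ and the nodal increments of $v^{f,h}$ on each edge are controlled by $\|f\|_\infty\,h^j_m$, the interpolants are equi-Lipschitz on each $\bar e_j$, so nodal convergence upgrades to uniform convergence on each $\bar e_j$, which is exactly the claim on $\prod_{j=1}^M\bar e_j$.

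I expect the genuine obstacles to be twofold. The structural-stability step is delicate because $\eta$ is not assumed continuous across the vertices and $v^f$ is itself discontinuous there, so the incoming and outgoing edges at a transition vertex $x_i$, $i\in\mI_T$, must be identified from the sign of $d^h$ alone and then matched with those of $d$. Secondly, $f$ is only $L^\infty$, so the pointwise values $f_j(t^j_m)$ appearing in the trapezoidal sums \eqref{v_h1}--\eqref{v_h2} are not individually meaningful; the convergence of these Riemann-type sums to $\int f$ should therefore be obtained in an averaged ($L^1$) sense, for instance by first approximating the source by continuous functions and invoking the continuous dependence of $v^f$ on $f$, rather than through a naive pointwise quadrature estimate.
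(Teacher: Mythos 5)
Your overall architecture --- first $d^h\to d$ uniformly, then stability of the combinatorial data ($\varsigma^j_m$, $S_j(d^h)$, $\Inc_i^\pm(d^h)$, $\t^h_j$, $P^h_j$), then comparison of \eqref{v} with \eqref{v_h1}--\eqref{v_h2} by induction along the partition $\cE_0,\cE_1,\dots$ of Theorem \ref{uniquenessMK} --- is exactly the one the paper sketches (the paper in fact leaves the proof to the reader). Your route to $d^h\to d$, via the representation formula \eqref{geodistD}, Riemann sums along paths and equi-Lipschitz interpolants, differs from the paper's suggested combination of viscosity-solution stability and the comparison principle of \cite{ls}, but it is sound. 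The genuine gap is in the step you yourself flag as ``the hard part'': the claim that uniform convergence of $d^h$ plus the nondegeneracy $|\pd_j d|=1/\eta_j\ge c>0$ forces the discrete signs adjacent to each vertex to coincide with $\s_{ij}(d)$ for $h$ small. Uniform convergence with error $\e_h=\|d^h-d\|_\infty$ controls a difference quotient on a cell of size $h_j$ only up to $2\e_h/h_j$, and for this first-order scheme $\e_h$ is at best of order $h$, so no sign information follows; this inference is simply invalid.

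Worse, the statement you need ($\Inc_i^\pm(d^h)=\Inc_i^\pm(d)$ for all small $h$) is false in general for the scheme \eqref{HJscheme}--\eqref{geodistD} as written. The delicate vertices are precisely those where \eqref{TC} is nontrivial, i.e. $N_i^-(d)\ge2$: there the routes from $x_i$ down the edges of $\Inc_i^-(d)$ are \emph{exactly tied} geodesics, and the one-sided weights $\eta(x_m)^{-1}\text{Dist}(x_m,x_{m+1})$ in \eqref{geodistD} perturb the competing discrete costs by direction-dependent quadrature errors of order $h$ which need not balance. Concretely, join $x_i$ to three boundary vertices by $e_1$ (length $1$, $\eta_1^{-1}\equiv1$), $e_2$ (length $2/5$, $\eta_2^{-1}$ linear, equal to $1$ at $x_i$ and to $4$ at the boundary, so $\int_{e_2}\eta_2^{-1}=1$) and $e_3$ (with $\eta_3^{-1}=1$ at $x_i$, $\int_{e_3}\eta_3^{-1}>1$, carrying the source on its portion draining into $x_i$). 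In the continuum both routes from $x_i$ cost $1$, so $\Inc_i^-(d)=\{1,2\}$ and, by \eqref{TC} and \eqref{f0}, $v^f\equiv C_{i1}\sum_{k\in\Inc_i^+(d)}v^f_k(\pi_k^{-1}(x_i))>0$ on $e_1$. Discretely, on uniform grids with $h_1=h_2=h$, the cost down $e_1$ is exactly $1$ while the cost down $e_2$ is exactly $1-\frac{3}{2}h$ (right-endpoint sum of a decreasing linear weight), hence $\delta^h(x_i)=1-\frac{3}{2}h$, which lies \emph{below} the value $1-h$ at the grid point of $e_1$ adjacent to $x_i$. Thus $\s_{i1}(d^h)=+1\neq\s_{i1}(d)$ for every small $h$: $e_1$ acquires a spurious maximum of $d^h$ at distance $O(h)$ from $x_i$, $\Inc_i^-(d^h)=\{2\}$, and \eqref{v_h1}--\eqref{v_h2} route the entire $O(1)$ transmitted mass into $e_2$, giving $v^{f,h}\equiv0$ on $e_1$ and no convergence. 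So sign stability cannot be deduced from convergence of $d^h$; when it holds (e.g. for edgewise constant $\eta$, as in the paper's Tests 1 and 3, where all discrete path costs are exact, or if the scheme's weights are symmetrized so that quadrature errors are $o(h)$), it must be proved by comparing the quadrature-error gap between competing discrete geodesics with the minimal step cost $\min_{j\in\Inc_i}\eta_j^{-1}(x_i)h_j$. This is the same point that the paper's one-line assertion that ``the singular set of $d^h$ converges to the singular set of $d$'' glosses over. Your closing remark about $f$ being merely $L^\infty$, so that the nodal values in \eqref{v_h1}--\eqref{v_h2} are not individually meaningful, is a correctly identified but secondary and repairable issue.
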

The proof of the above convergence result is quite standard and we leave it to the reader. Indeed, the convergence of $d^h$ toward $d$ can be easily proved by a combination of classical arguments in viscosity solution theory and the Comparison Principle in \cite{ls}. Once the uniform convergence of $d^h$ is obtained, the convergence of  $v^{f,h}$ toward $v^f$ easily follows by the comparison of the explicit formulas \eqref{v} and \eqref{v_h1}-\eqref{v_h2} observing that the singular set of $d^h$ converges to the singular set of $d$.
%%%%%%%%%%%%%%%%%%%%%%%%%%%%%%%%%%%%%%
%       Numerical Simulation         %
%%%%%%%%%%%%%%%%%%%%%%%%%%%%%%%%%%%%%%
\section{SPNET and numerical tests}\label{sec:test}
In this section we first briefly introduce  {\bf SPNET} ({\bf S}and {\bf P}iles on {\bf NET}works), an easy-to-use program written in C we developed for the numerical approximation of the sand pile problem. The interested reader can download the software at \texttt{\small  http://www.dmmm.uniroma1.it/$\sim$fabio.camilli/spnet.html}.  Next, we shall consider three numerical tests showing the features of the proposed method and providing empirical convergence analysis.

SPNET takes in input .net files, which are simple text files containing lists of vertices and edges, formatted as follows:\\
\texttt{
\#SPNET\\
\#v$_1$ x$_1$ y$_1$ type$_1$\\
...\\
\#v$_N$ x$_N$ y$_N$ type$_N$\\
\#e$_1$ start$_1$ end$_1$ n$_1$ f$_1$(t) eta$^{-1}_1$(t)\\
...\\
\#e$_M$ start$_M$ end$_M$ n$_M$ f$_M$(t) eta$^{-1}_M$(t)
}\\
where
\begin{itemize}
\item [(i)] \texttt{\#SPNET} is just a header to recognize .net files;
\item [(ii)]\texttt{\#v$_i$  x$_i$ y$_i$ type$_i$} defines a vertex with \texttt{type$_i$} equal to \texttt{b} (for boundary) or \texttt{t} (for transition) with coordinates \texttt{(x$_i$,y$_i$)};
\item [(iii)]\texttt{\#e$_j$ start$_j$ end$_j$ n$_j$ f$_j$(t) eta$^{-1}_j$(t)} defines an edge connecting the vertices with indices \texttt{start$_j$} and \texttt{end$_j$}. The edge is parametrized from the vertex \texttt{start$_j$} to the vertex \texttt{end$_j$} using \texttt{n$_j$} discretization nodes. Finally, \texttt{f$_j$(t)} and \texttt{eta$_j^{-1}$(t)} are respectively the sand source \eqref{source} and the inverse of the  spatial inhomogeneity \eqref{hyp_eta} on the current edge, defined as symbolic analytic functions of the parameter $t$ ranging in $[0,1]$.
\end{itemize}

The program checks for syntax errors in the input .net file, then quickly computes the solution pair $(d^h,v^{f,h})$ and plots the results in a \texttt{gnuplot} window (\texttt{\small http://www.gnuplot.info}). The current view can be saved into a .pdf or .svg file, and some information about the solution can be printed on the  screen.

Let us spend some words on the actual implementation. After allocating proper data structures for the network, the program first computes the approximation $d^h$ of the distance function $d$. This is readily done using  the scheme \eqref{HJscheme} in a fast marching fashion for all the edges,
suitably modified to handle the transition vertices. More precisely, one has to correctly  propagate information to all the incoming edges when a transition vertex is encountered (for further details we refer the reader to \cite{s}).  The second step consists in computing all relevant data derived from $d^h$, including its slopes, singular set and projections. Finally the edges are dynamically processed, starting from those containing points of the singular set, i.e. using the partition $(\cE_0,\cE_1 ,\dots)$ of $\cE$ (see Theorem \ref{uniquenessMK}). The approximation $v^{f,h}$ of the rolling component $v^f$ is then computed recursively from $\cE_0$ up to the boundary $\partial\cN$, using the values obtained in the previous iterations, according to formula \eqref{v_h1}-\eqref{v_h2}. In all the following examples the amount of rolling sand at the transition vertices  is assigned along the incoming edges by prescribing  the coefficients $C_{ij}$ equal to $1/N_i^-(d)$ (uniform repartition of the incoming sand), but this is not a restriction for the code that indeed can be used in the general setting.

{\bf Test 1. } We start considering a simple example of planar network composed of four vertices $\{x_i\}_{i=0,1,2,3}$, with $\mI_T=\{0\}$ and $\mI_B=\{1,2,3\}$, and three edges $\{e_j\}_{j=1,2,3}$ connecting each one $x_0$ to $x_j$, $j\ne0$, with parameter's sets $[0,\ell_j]=[0,1/2]$ for $j=1,2,$ and $[0,\ell_3]=[0,1]$, (see Figure \ref{network1} (a)). Moreover, we assume  $\eta\equiv 1$ on the whole network, so that the metric \eqref{dist} coincide with \eqref{metric}, and the sand source $f$ is given edgewise by (see Figure \ref{network1} (b))
 \[
f_j(t)=\left\{
         \begin{array}{ll}
          1-2t  & \hbox{if $j=1,2$,\quad $t\in [0,1/2]$,} \\
          1-t & \hbox{if $j=3$,\quad $t\in [0,1]$.}
         \end{array}
       \right.
\]

\begin{figure}[h!]
 \begin{center}
  \begin{tabular}{cc}
\includegraphics[width=.4\textwidth]{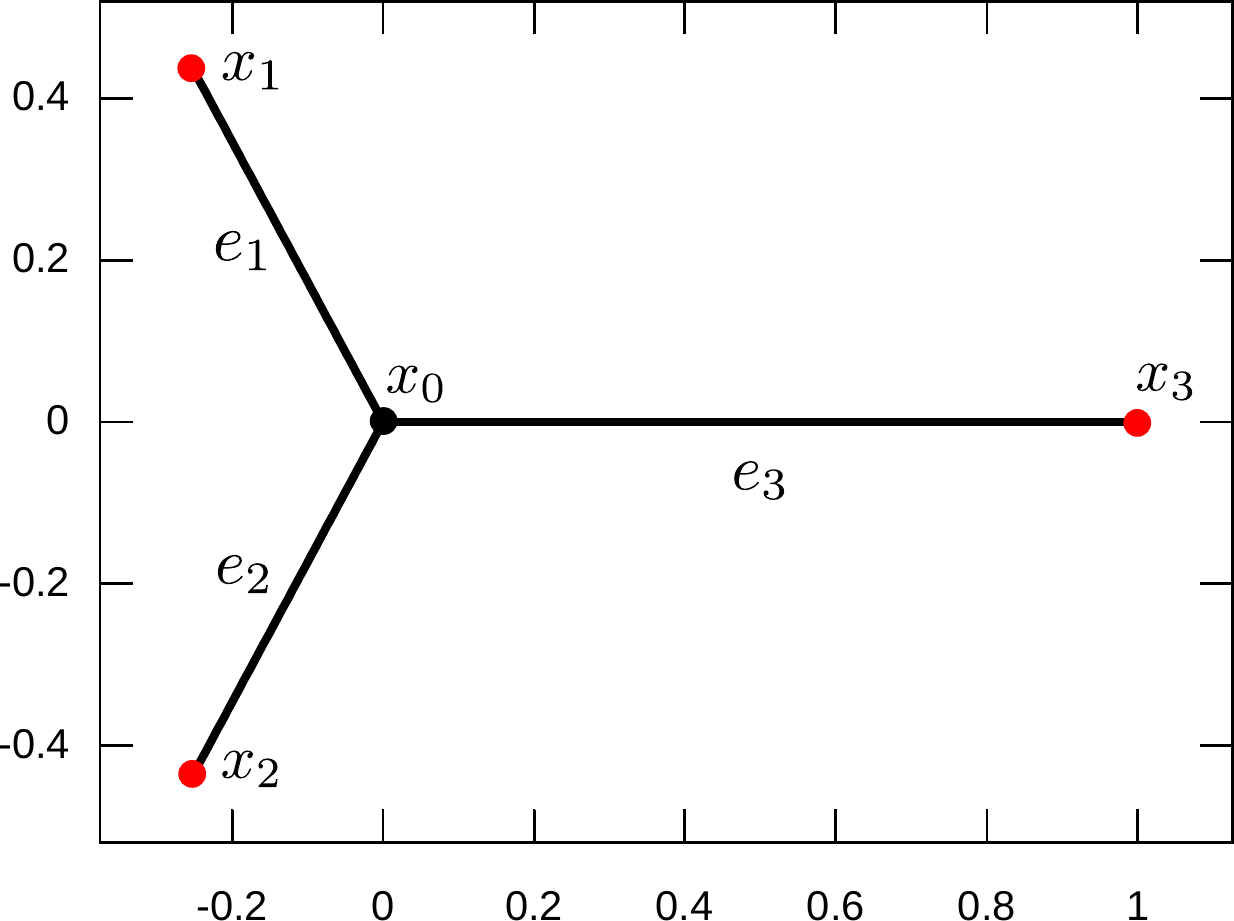}&
\includegraphics[width=.4\textwidth]{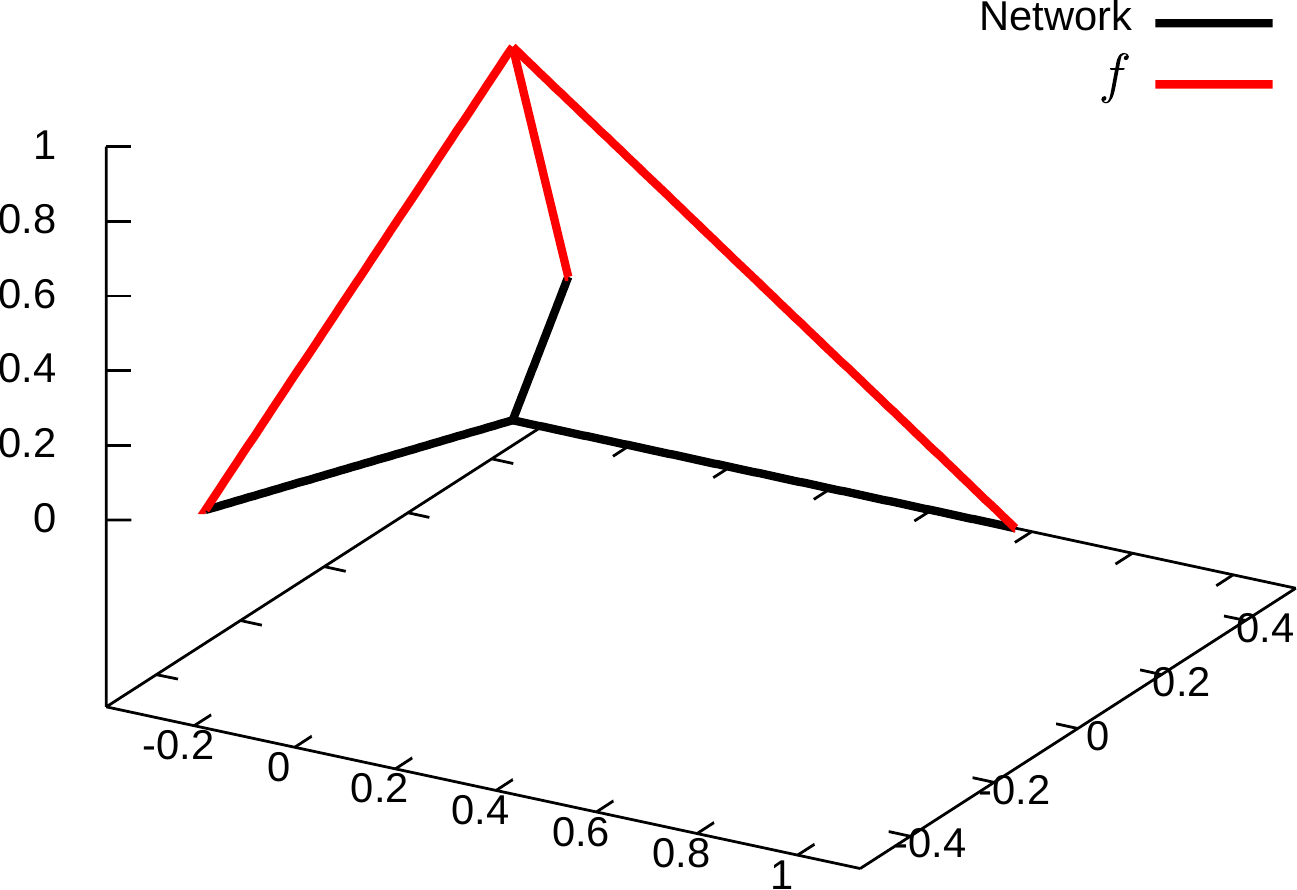}\\
  (a)&(b)
 \end{tabular}
\end{center}
\caption{Network (a) and sand source (b) for Test 1.}\label{network1}
\end{figure}

In this situation, the solution of the sandpile problem can be computed explicitly. In particular, the distance $d$ from the boundary is given by
\[
d_j(t)=\left\{
         \begin{array}{ll}
          \frac{1}{ 2}-t, & \hbox{if $j=1,2$, $t\in [0,1/2]$,} \\
          (\frac{1}{ 2}+t)\carat_{[0,\frac{1}{ 4})}(t)+ (1-t)\carat_{[\frac{1}{ 4},1]}(t) & \hbox{if $j=3$, $t\in [0,1]$,}
         \end{array}
       \right.
\]
where the value $t=\frac14\in[0,\ell_3]$ gives the unique singular point of the distance $d$ over the network. It follows that $\Inc_0^+=\{3\}$, $\Inc_0^-=\{1,2\}$, and we get $v^f$ by \eqref{v} as
\[
v_j^f(t)=\left\{
         \begin{array}{ll}
         t-t^2+\frac{7}{64} & \hbox{if $j=1,2$, $t\in [0,1/2]$,} \\
         \left(\frac12 t^2-t+\frac{7}{32}\right)\left(\carat_{[0,\frac{1}{ 4})}(t)-\carat_{[\frac{1}{4},1]}(t)\right), & \hbox{if $j=3$, $t\in [0,1]$,}
         \end{array}
       \right.
\]
where $\frac{7}{32}$ is the contribution of the rolling layer on $e_3$ to $x_0$, which is uniformly split in $e_1$, $e_2$. Moreover, since $f\neq 0$ all over the network, by Theorem~\ref{uniquenessMK} the couple $(d,v^f)$ is the unique solution of the sandpile problem.

In Figure \ref{test1} we show the numerical solution $(d^h,v^{f,h})$ computed by SPNET using a uniform discretization step $h=10^{-2}$ for all the edges.
\begin{figure}[h!]
 \begin{center}
 \begin{tabular}{cc}
\includegraphics[width=.4\textwidth]{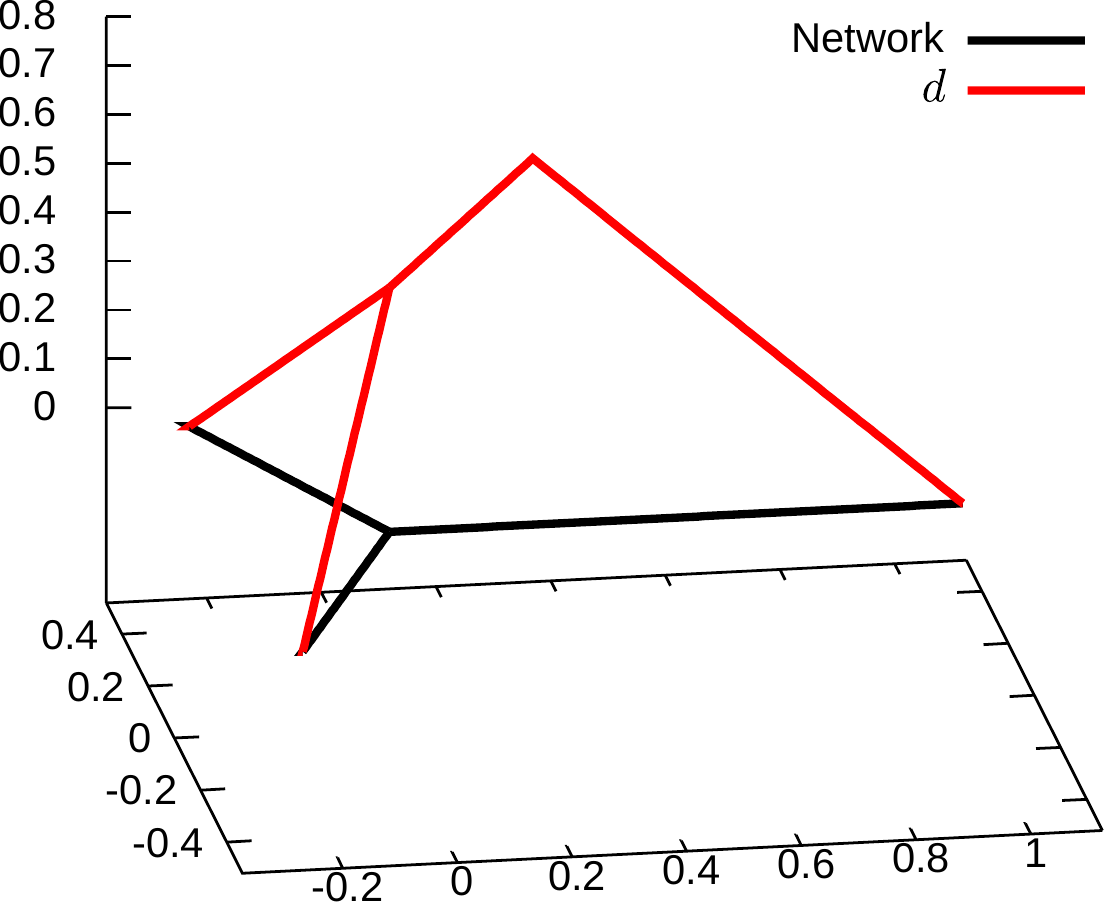}&
\includegraphics[width=.4\textwidth]{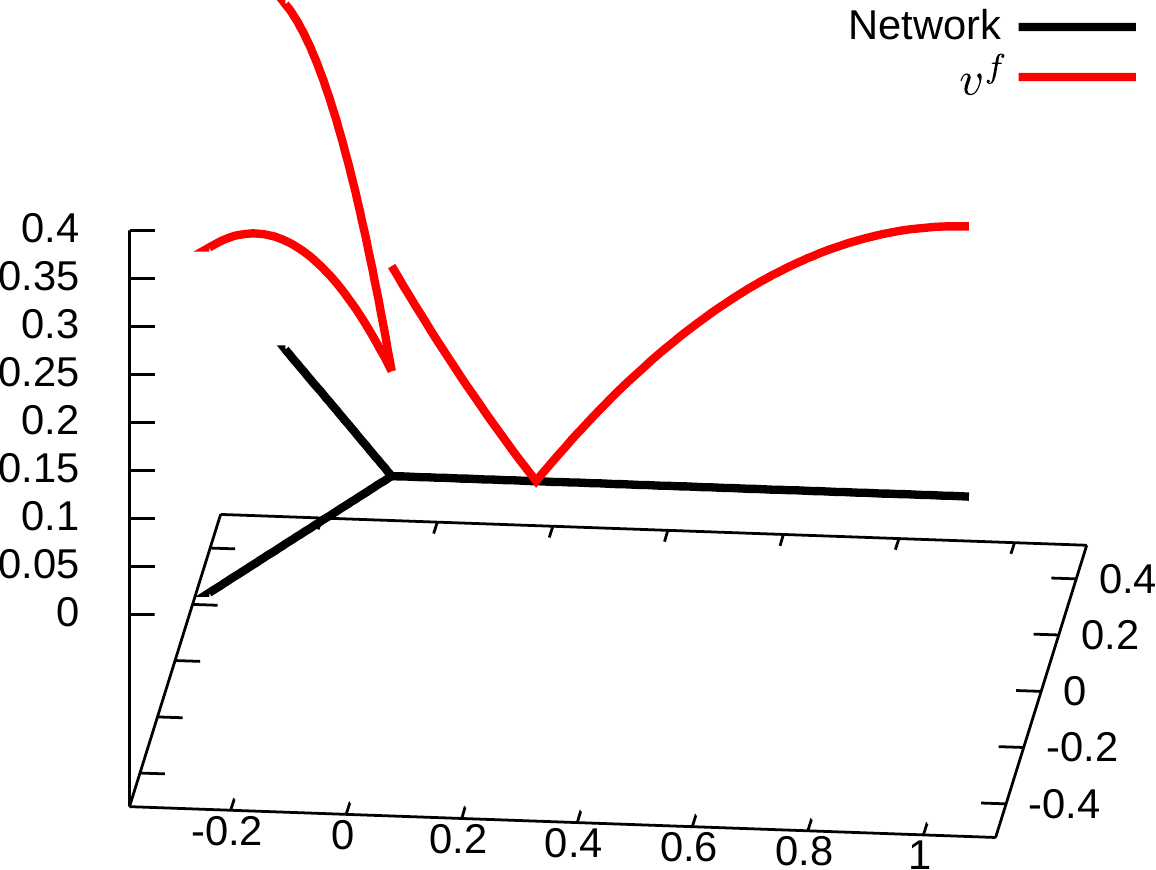}\\
  (a)&(b)
 \end{tabular}
\end{center}
\caption{Numerical solution of Test 1, distance $d^h$ (a) and rolling component $v^{f,h}$ (b).}\label{test1}
\end{figure}
Note that the component $v^{f,h}$ is multivalued at $x_0$ as~$v^f$.

We remark that  since in this test the sand source~$f$  is a linear function on each edge, the trapezoidal quadrature rule in \eqref{v_h1}-\eqref{v_h2} 
computes the exact values at the grids points. Since $d$ is also a piecewise linear function, the only source of error is given by the wrong localization of the singular point  and   we never see an error if we compare the exact and the approximate solution   only on the   grid points. Hence, for a more fair comparison,  we evaluate the error on the whole network introducing in this way an additional interpolation error for $v^{f,h}$. To sample both the $L^\infty$ and $L^1$ errors, we use    a very fine grid of $10^4$ nodes per edge.

In Figure \ref{errors1} we show the errors (in logarithmic scale) against a uniform discretization step $h$ ranging from $10^{-1}$ to $10^{-3}$.
\begin{figure}[h!]
 \begin{center}
 \begin{tabular}{cccc}
\includegraphics[width=.35\textwidth]{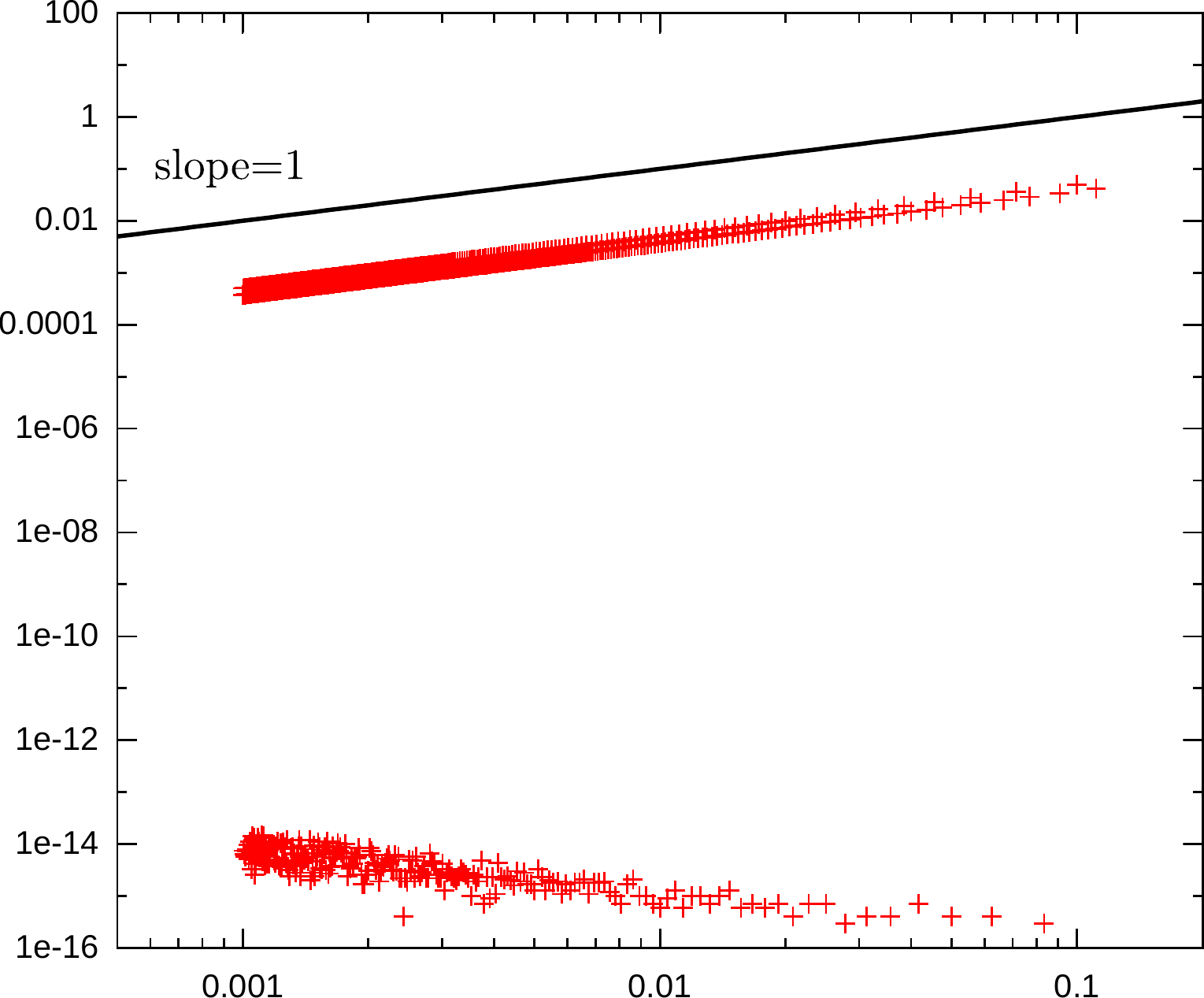} &
\includegraphics[width=.35\textwidth]{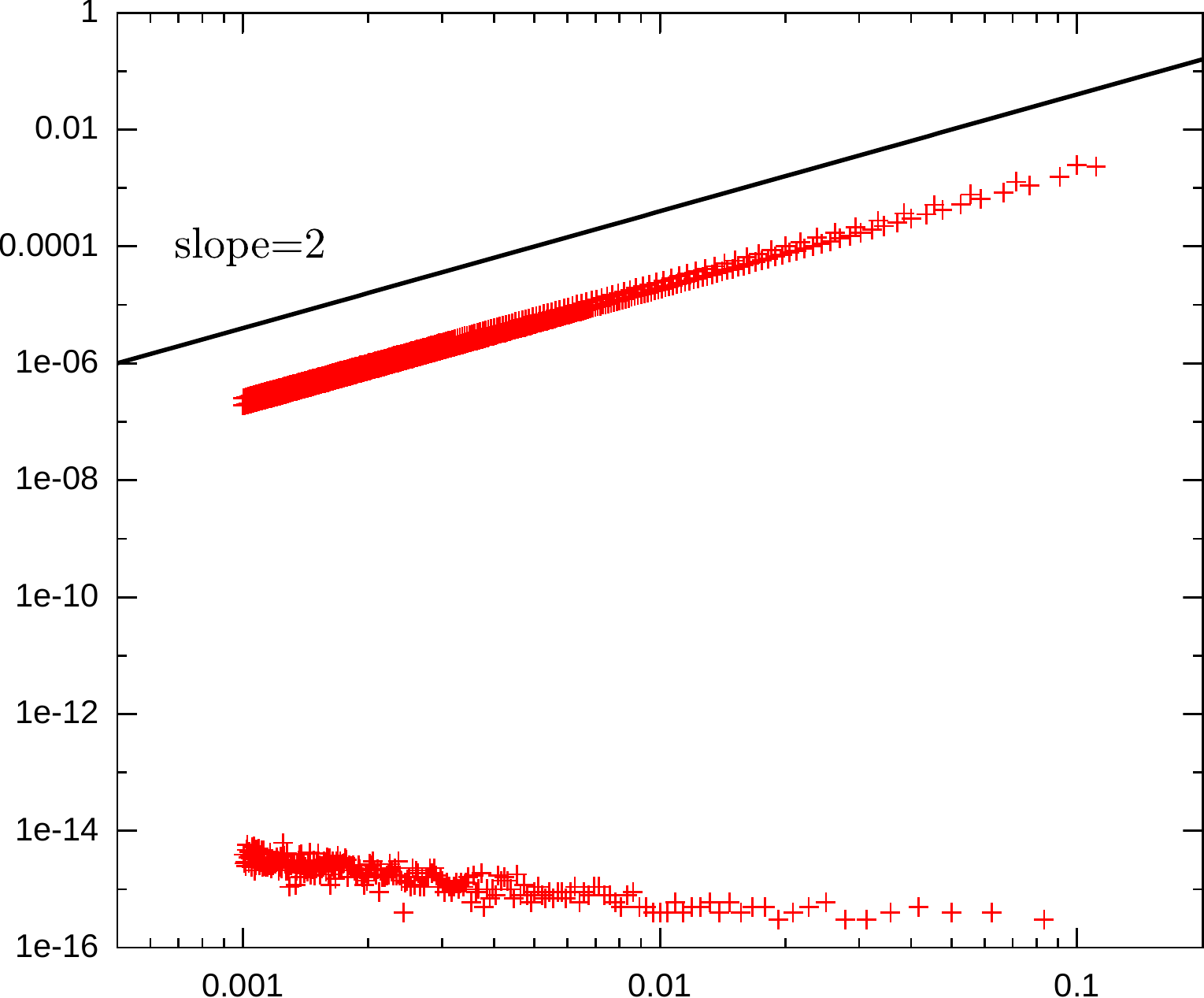} \\ (a) &(b)\\
\includegraphics[width=.35\textwidth]{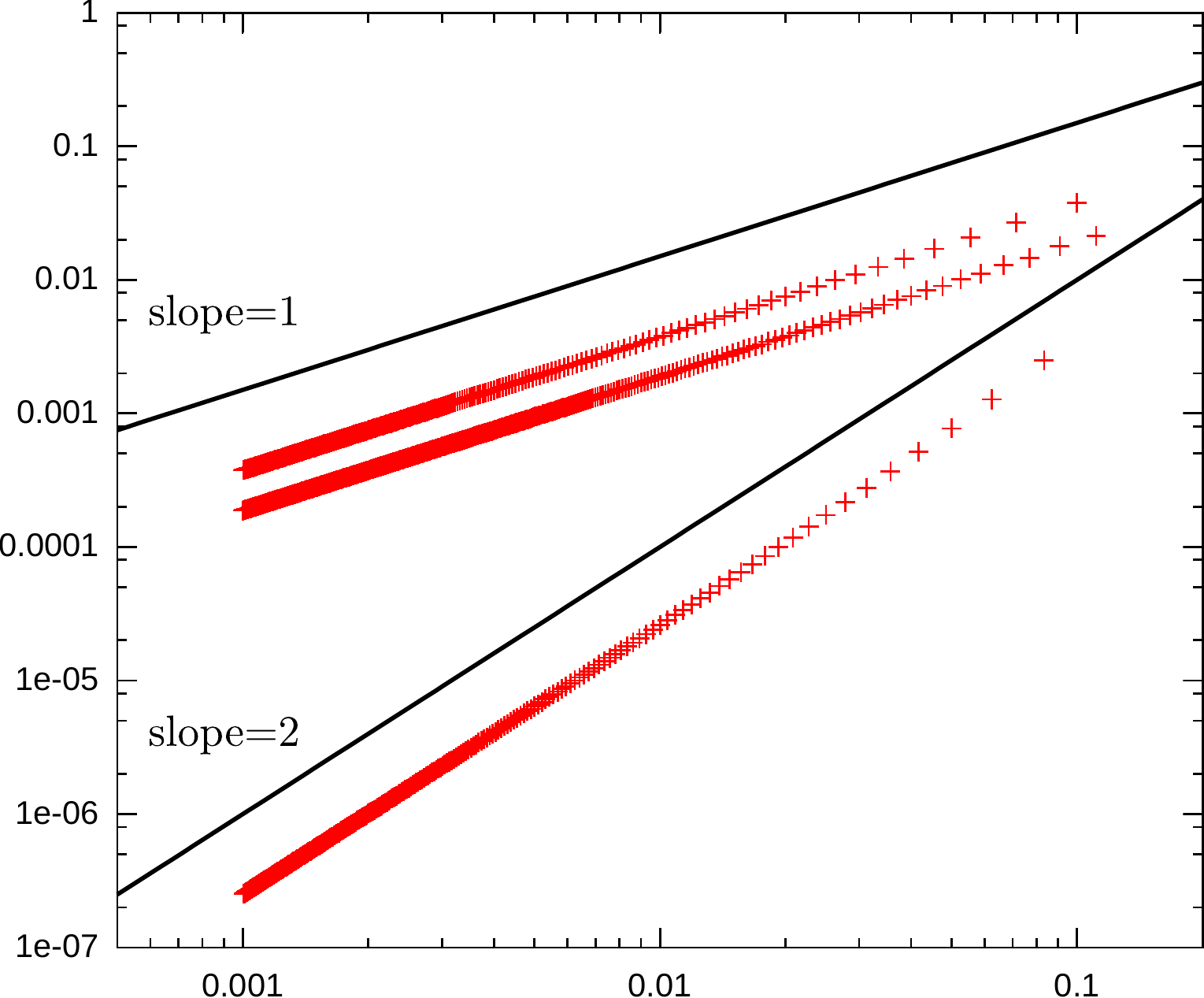} &
\includegraphics[width=.35\textwidth]{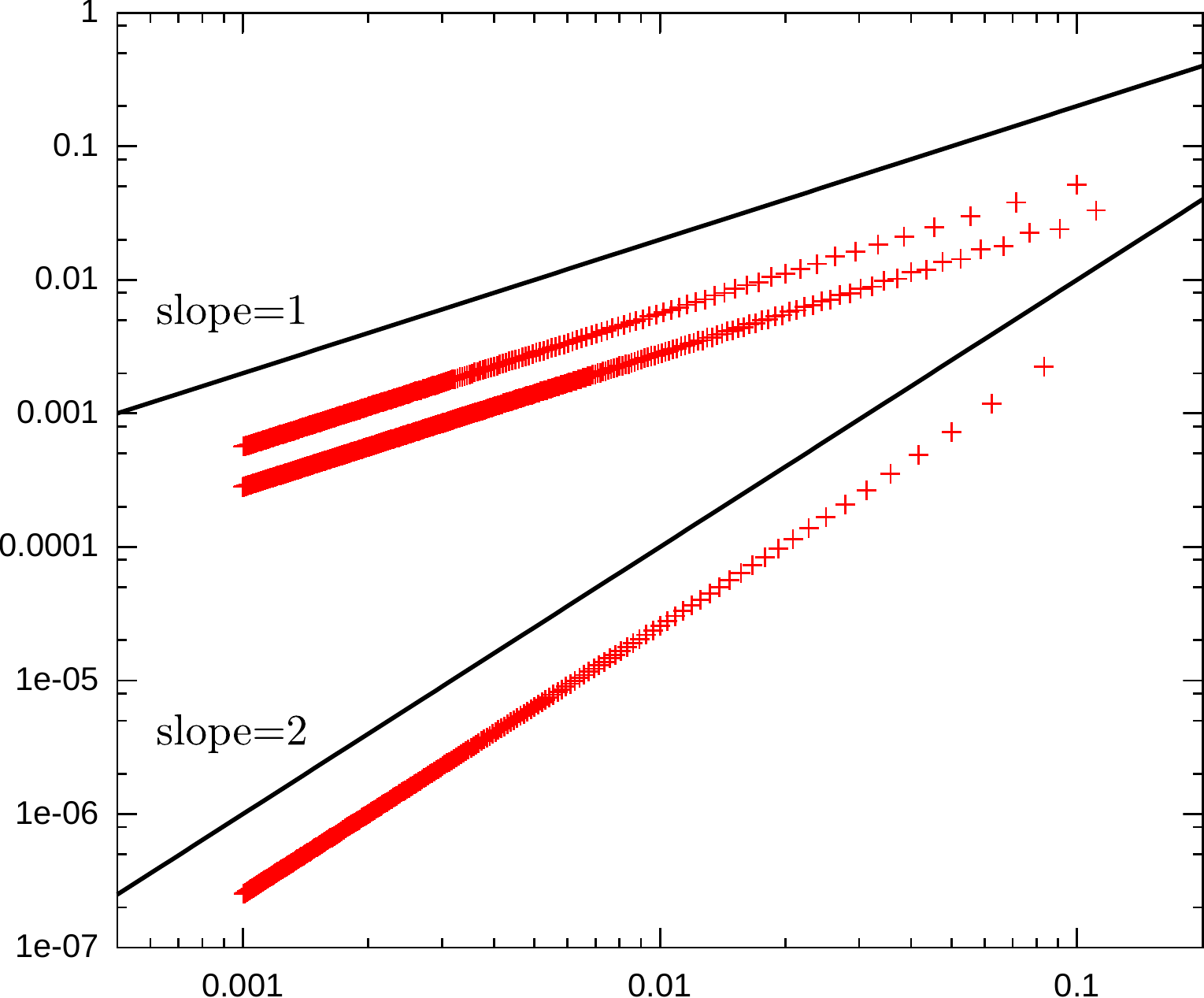} \\ (c)&(d)
 \end{tabular}
\end{center}
\caption{Errors $L^\infty$ and $L^1$ vs $h$ in Test 1, respectively for $d^h$ (a),(b) and $v^{f,h}$ (c),(d).}\label{errors1}
\end{figure}
We readily observe for the distance $d^h$ an $L^\infty$ error of order $1$ and an $L^1$ error of order $2$ (see respectively Figures \ref{errors1} (a) and \ref{errors1} (b)). Moreover, we confirm that both errors vanish (up to machine precision and round-off errors)
for all the grids containing the singular point.

The behavior of the errors for the rolling component $v^{f,h}$ is instead slightly different. Indeed, we see in Figures \ref{errors1} (c) and \ref{errors1} (d) that both are at worst of order $1$, but become of order $2$ (they are not almost zero as for $d$!) if the corresponding grids contain the singular point. As explained, in this situation the errors in $v^{f,h}$ are zero only on the network grid, elsewhere we pay for the interpolation.

{\bf Test 2. } We now extend the previous network adding two additional edges $e_4$ and $e_5$, connecting respectively $x_2$ to $x_1$ and $x_1$ to $x_3$. Moreover, we set $\mI_T=\{0,1,2\}$ and $\mI_B=\{3\}$ (see Figure \ref{network2}~(a)). We assume the sand source $f$  and the spatial dishomogeneity $1/\eta$ given edgewise in normalized coordinates $t\in[0,1]$ respectively by (see   Figure \ref{network2} (b))
 \[
f_j(t)=\left\{
         \begin{array}{ll}
          2\chi\{|t-\frac14|\le \frac18\} & \hbox{if $j=4$,\quad $t\in [0,1]$,} \\
             0  & \hbox{otherwise,} \\
         \end{array}
       \right.
\quad\text{and}\quad
1/\eta_j(t)=\left\{
         \begin{array}{ll}
          \frac15 & \hbox{if $j=3$,\quad $t\in [0,1]$,} \\
             1  & \hbox{otherwise.} \\
         \end{array}
       \right.
\]
\begin{figure}[h!]
 \begin{center}
 \begin{tabular}{cc}
\includegraphics[width=.4\textwidth]{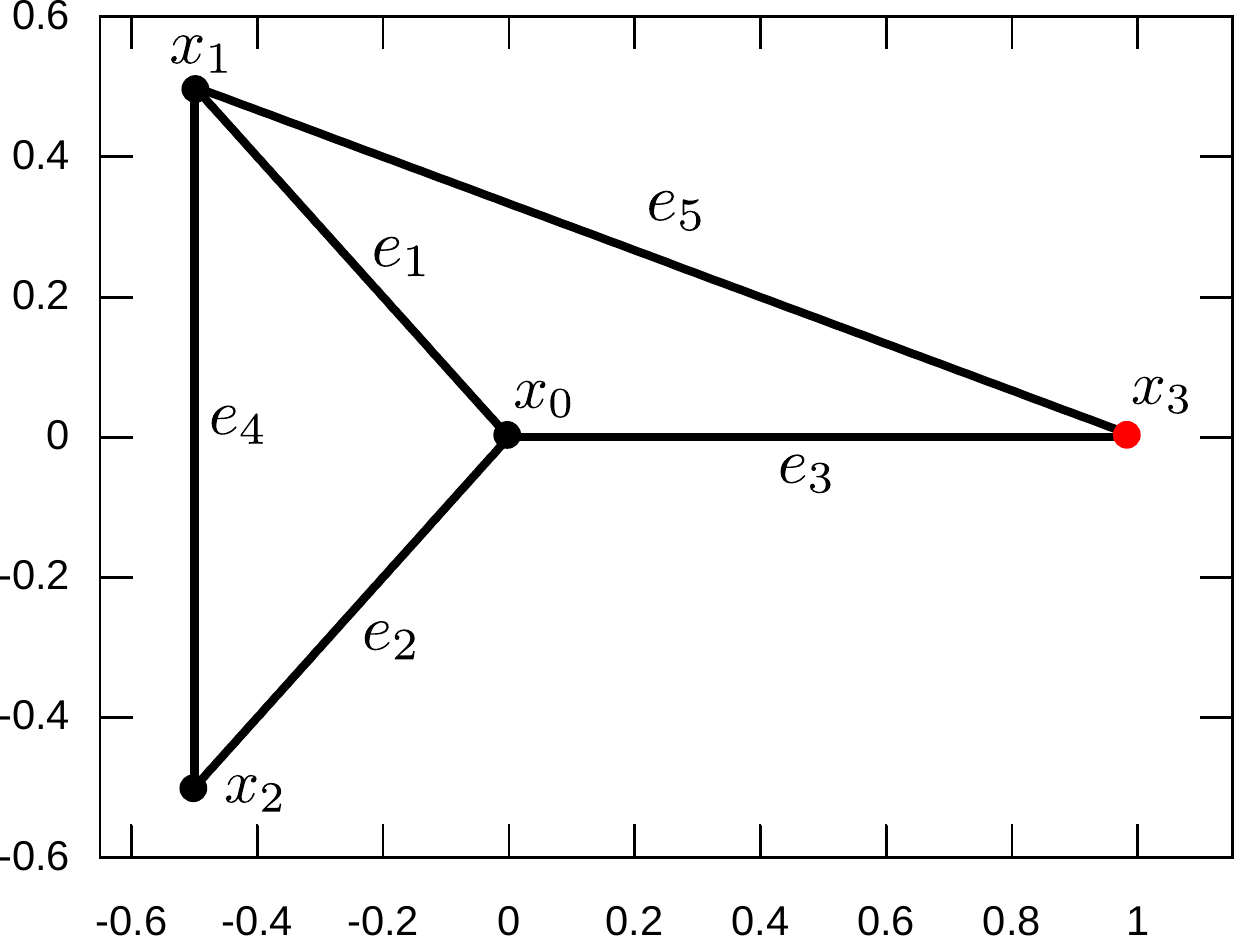}&
\includegraphics[width=.4\textwidth]{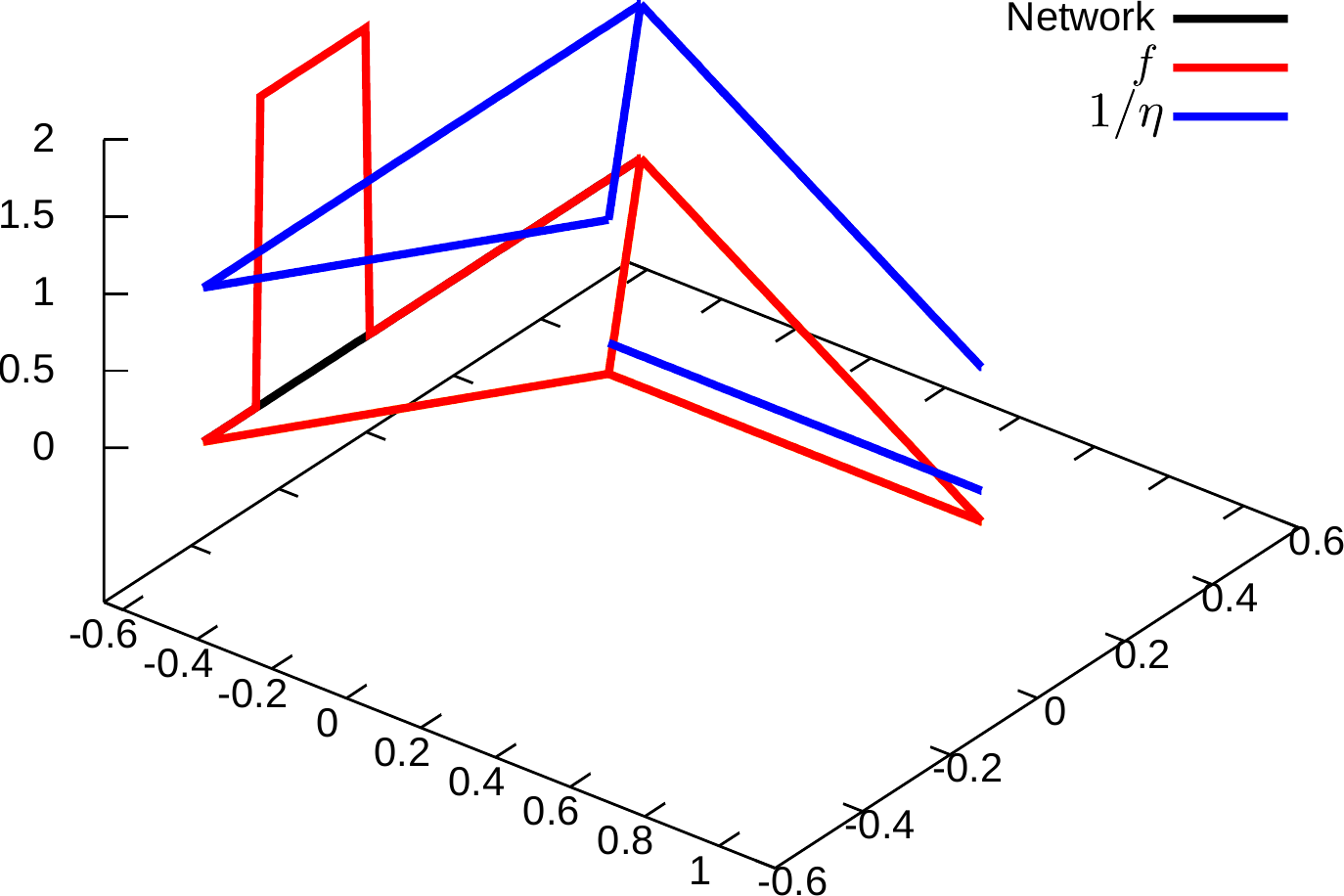}\\
  (a)&(b)
 \end{tabular}
\end{center}
\caption{Network (a) and sand source/dishomogeneity (b) for Test 2.}\label{network2}
\end{figure}

In Figure \ref{test2} we show the numerical solution $(d^h,v^{f,h})$ computed by SPNET using a uniform discretization step $h=10^{-2}$ for all the edges.
\begin{figure}[h!]
 \begin{center}
 \begin{tabular}{cc}
\includegraphics[width=.4\textwidth]{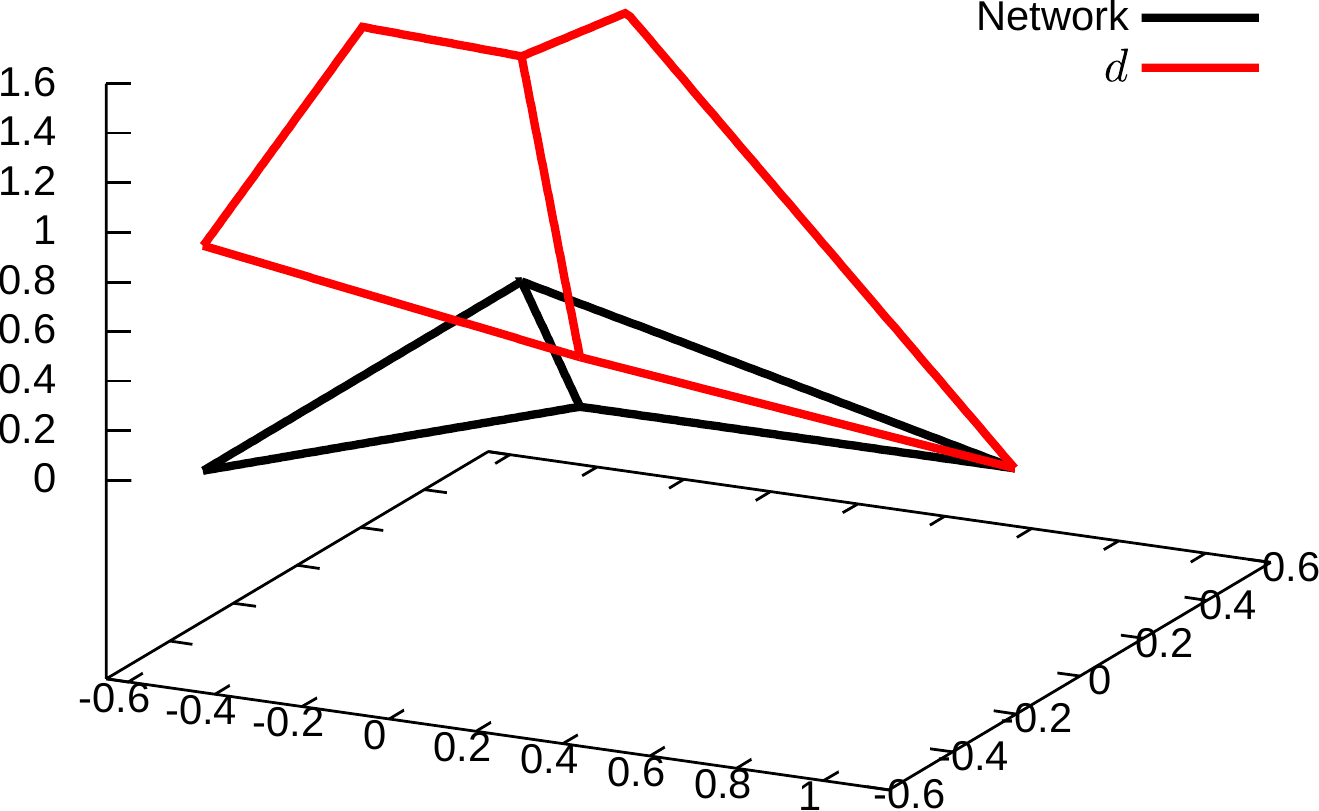}&
\includegraphics[width=.35\textwidth]{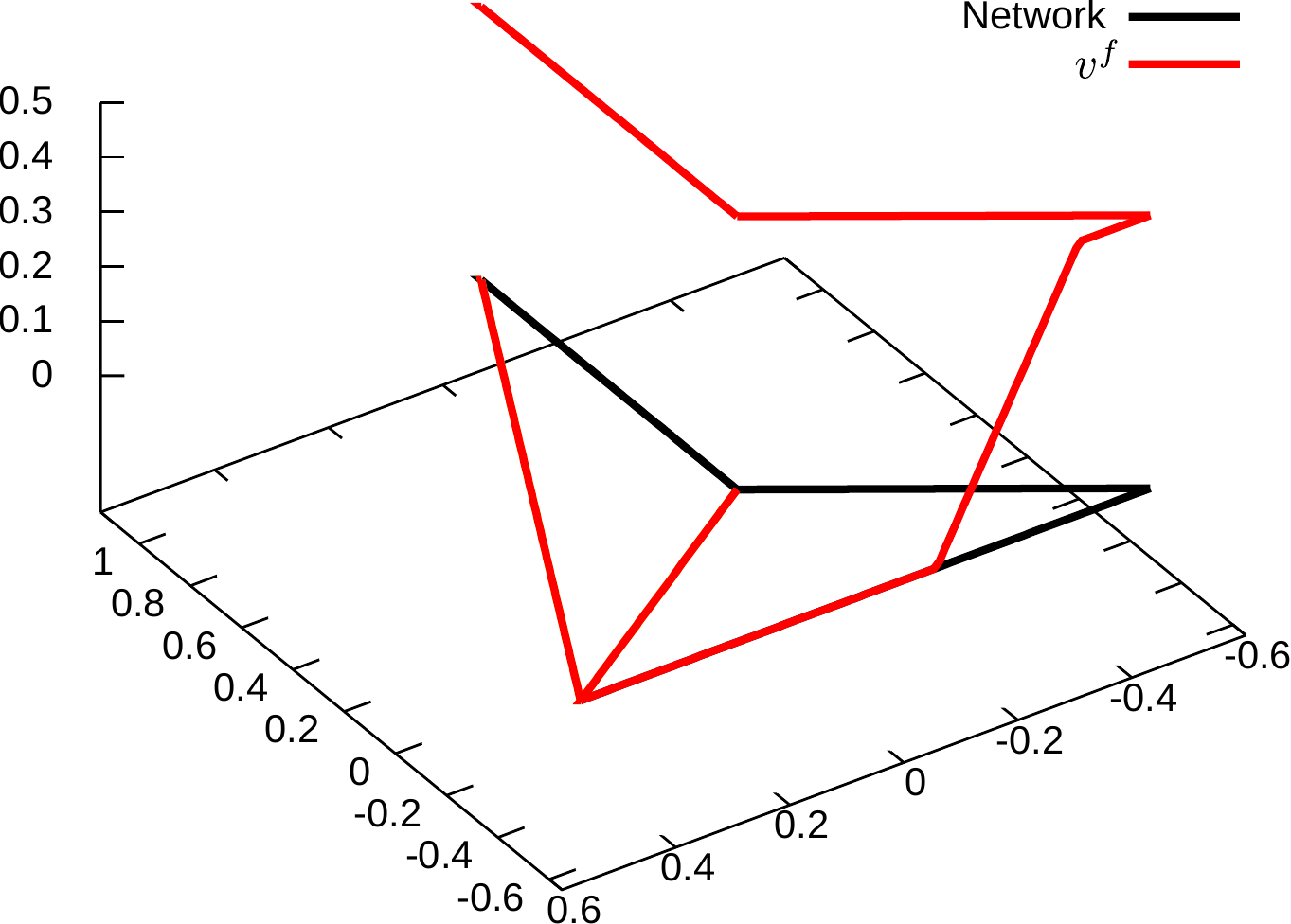}\\
  (a)&(b)
 \end{tabular}
\end{center}
\caption{Numerical solution of Test 2, distance $d^h$ (a) and rolling component $v^{f,h}$ (b).}\label{test2}
\end{figure}
Note that $v^{f,h}$ is multivalued at $x_0$ and $x_3$ and continuous at the other vertices. Moreover, $v^{f,h}$ is zero on $e_1$, $e_5$ and part of $e_4$, and we loose uniqueness of the solution pair $(d,v^f)$.

{\bf Test 3. } We finally consider a more complex network composed of 6 vertices and 9 edges, which is a two-level pre-fractal for the Sierpi\'nski triangle. The extremal vertices $x_0$, $x_1$ and $x_2$ are the boundary vertices, whereas the internal ones are the transition vertices (see Figure \ref{network3} (a)).
\begin{figure}[h!]
 \begin{center}
  \begin{tabular}{cc}
\includegraphics[width=.4\textwidth]{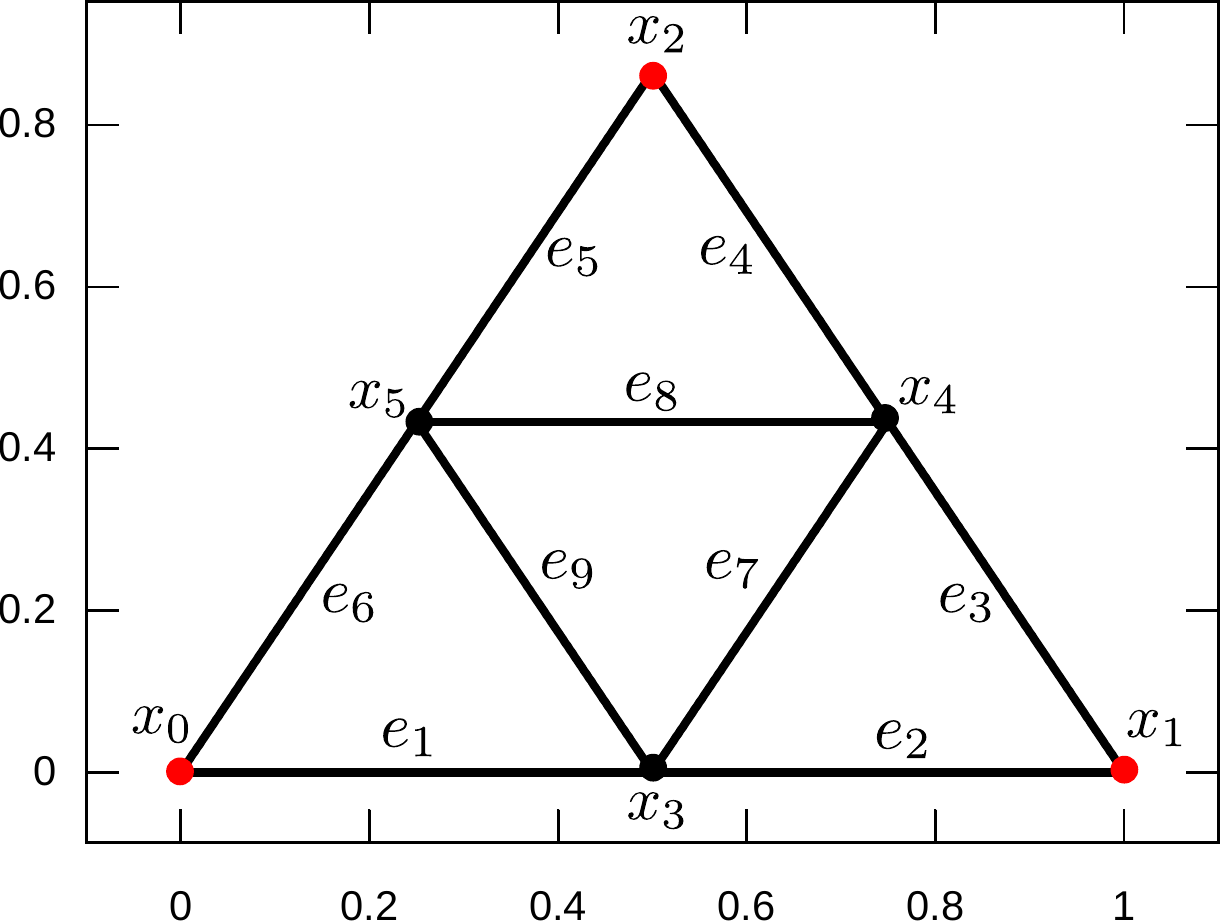}&
\includegraphics[width=.375\textwidth]{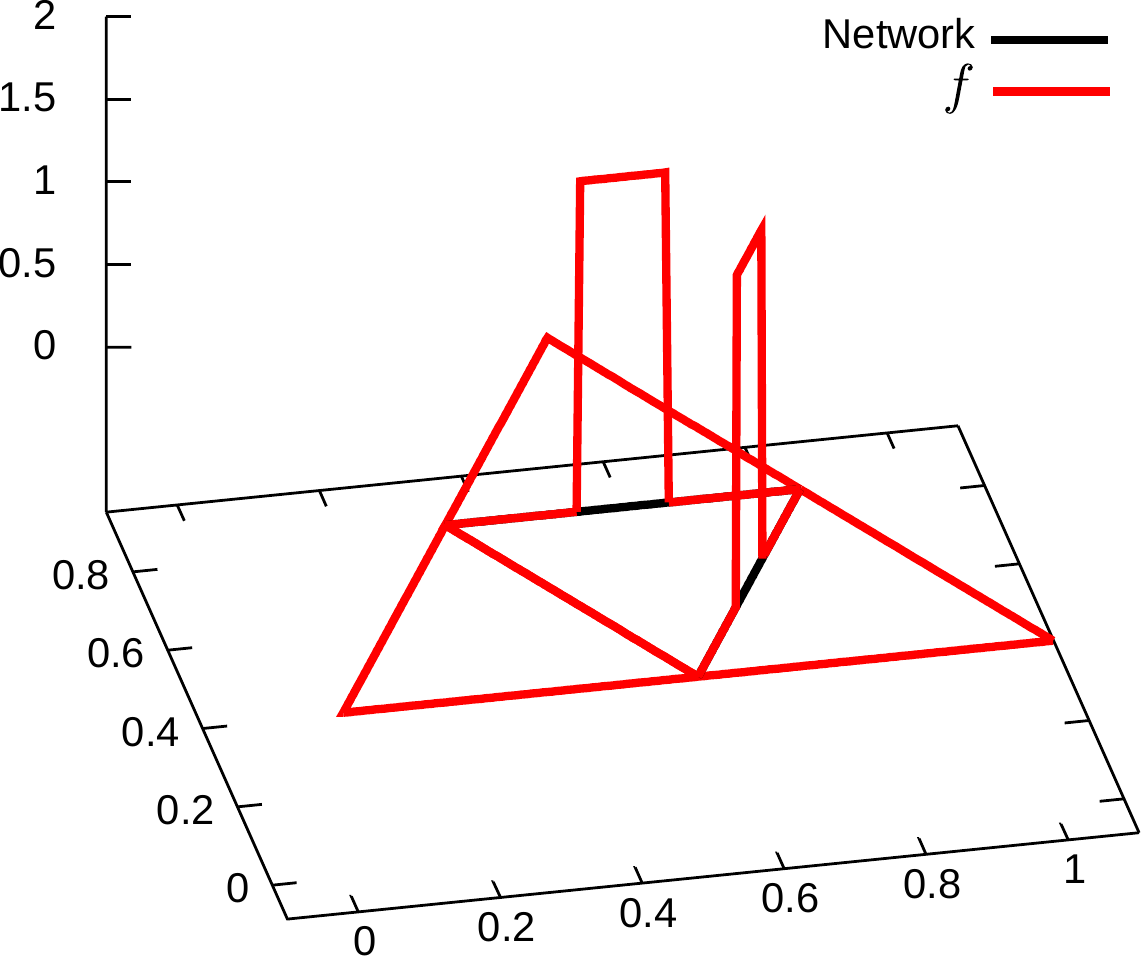}\\
  (a)&(b)
 \end{tabular}
\end{center}
\caption{Network (a) and sand source (b) for Test 3.}\label{network3}
\end{figure}\\
Again, we assume $\eta\equiv 1$ on the whole network and we take a sand source $f$ similar to the one in Test~2, supported in a sub-interval of the edges $e_7$ and $e_8$ (see also Figure \ref{network3} (b))
 \[
f_j(t)=\left\{
         \begin{array}{ll}
          2\chi\{|t-\frac12|\le \frac18\} & \hbox{if $j=7,8$,\quad $t\in [0,1]$,} \\
             0  & \hbox{otherwise.} \\
         \end{array}
       \right.
\]
In Figure \ref{test3} we show the numerical solution $(d^h,v^{f,h})$ computed by SPNET using a uniform discretization step $h=10^{-2}$ for all the edges. We observe a more rich structure. The distance $d^h$ has three singular points and $v^{f,h}$ is continuous at the vertices $x_0$ and $x_4$ and multivalued at the remaining vertices. Moreover, $v^{f,h}$ is zero on $e_9$ where uniqueness fails.
\begin{figure}[h!]
 \begin{center}
 \begin{tabular}{cc}
\includegraphics[width=.4\textwidth]{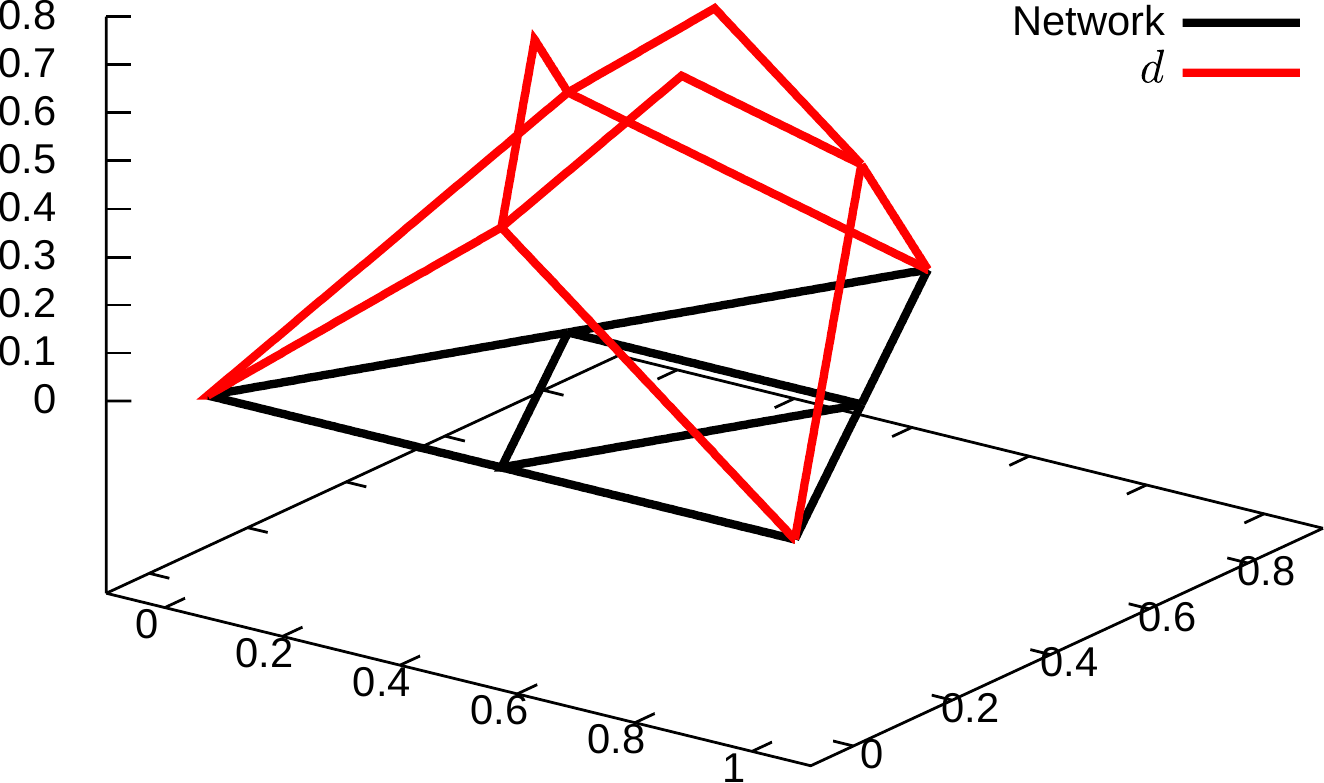}&
\includegraphics[width=.35\textwidth]{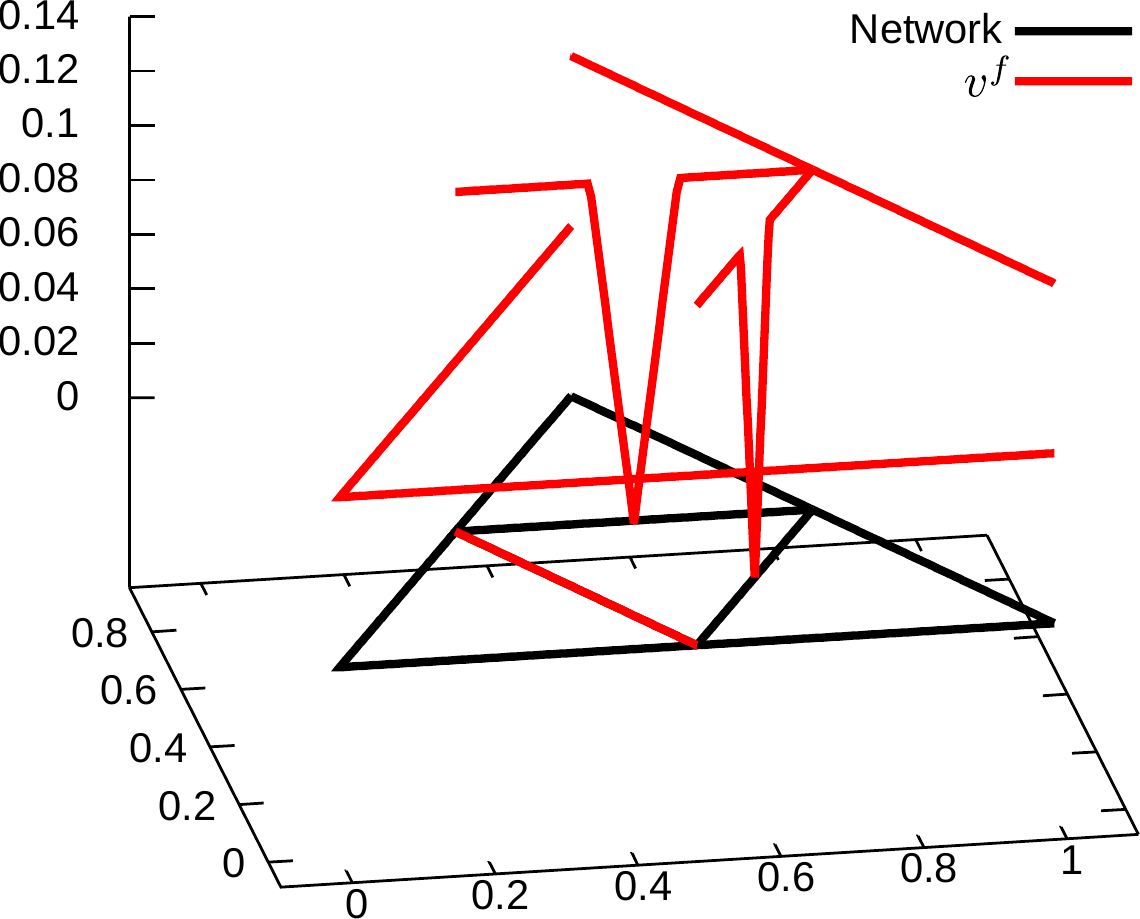}\\
  (a)&(b)
 \end{tabular}
\end{center}
\caption{Numerical solution of Test 3, distance $d^h$ (a) and rolling component $v^{f,h}$ (b).}\label{test3}
\end{figure}
%%%%%%%%%%%%%%%%%%%%%%%%%%%%%%%%%%%%%%
%       Appendix                     %
%%%%%%%%%%%%%%%%%%%%%%%%%%%%%%%%%%%%%%
\appendix
\section{Appendix A}\label{appendix}
\begin{Proofc}{Proof of Proposition \ref{3:p1}}
We shall prove only claims (i) and (ii) since the remaining (iii) and~(iv) are straightforward consequences.  The statements (i) and (ii) follow on $\cE$  observing that if $w$ is a viscosity solution of
\[
|w'(t)|-g(t)=0, \qquad t\in (a,b)\,,
\]
with $g\in C([a,b])$, $g>0$, then one of the following cases is true:
\begin{itemize}
\item [(i)] $w$ is a piecewise $C^1$ function over $[a,b]$ with exactly one singular point $\bar t\in (a,b)$, $w'>0$ on $[a,\bar t)$ and $w'<0$ on $(\bar t,b]$;
\item [(ii)] $w\in C^1([a,b])$ and either $w'>0$ or $w'<0$ on $[a,b]$.
\end{itemize}
In fact,  $w$ cannot have minimum point inside $(a,b)$. Otherwise, if $w$ has a local minimum at $t_0\in (a,b)$, the constant function $\phi(t)\equiv w(t_0)$ is a test function such that $(w-\phi)$ has a local minimum at $t_0$. Since $ \phi'(t_0)\equiv0$ and $g$ is positive, we get a contradiction with \eqref{supint}, the definition of supersolution at~$t_0$. On the other hand, if $w$ has two maximum points inside $(a,b)$, say $t_1$ and $t_2$, then, because of the previous property, $w$ is constant in the interval $(t_1,t_2)$ and we get again a contradiction by taking a test function constant over $(t_1,t_2)$.

Finally, the fact that $u$ does not atteint a local minimum in a transition vertex too follows applying similar arguments.
\end{Proofc}

\begin{Proofc}{Proof of Proposition \ref{prop_eik1}}
Let $x_0\in\cN$ be fixed and $u(\cdot):=\cD(x_0,\cdot)$. The continuity of $u$ is a straightforward consequence of the definition of $\cD$ itself. Next, we will show only that $u$ satisfies the supersolution condition \eqref{supvertex} at $x_i$, $i\in \cI_T$, since the proofs of \eqref{subint} and \eqref{supint} can be obtained with the same type of reasoning. Moreover, \eqref{subint} and \eqref{supint} follow with equality.

Let $\phi\in C^1(\cN)$ be a test function such that $(u-\phi)$ has a local minimum at the transition vertex~$x_i$. If in turn $x_0$ is a transition vertex, we assume that $x_i\ne x_0$. Let $\cP(x_0,x_i)=(\bar e_{j_1}\cap\bar e(x_0),\dots,\bar e_{j_{n+1}})$ be a geodesic path realizing $\cD(x_0,x_i)$. Then, $(u_{j_{n+1}}-\phi_{j_{n+1}})$ has a local minimum at $\pi_{j_{n+1}}^{-1}(x_i)$ and
\[
(u_{j_{n+1}}-\phi_{j_{n+1}})(x_i)\le (u_{j_{n+1}}-\phi_{j_{n+1}})(x)
\]
for all $x\in e_{j_{n+1}}$ in a neighborhood of $x_i$. Since for those $x$, $\cP(x_0,x)=(\bar e_{j_1}\cap\bar e(x_0),\dots,\bar e_{j_{n+1}}\cap\bar e(x))$ is a geodesic path realizing $\cD(x_0,x)$, it holds that $\cD(x_0,x_i)-\cD(x_0,x)=\cD(x,x_i)$ and
\[
\phi_{j_{n+1}}(x_i)-\phi_{j_{n+1}}(x)\ge u_{j_{n+1}}(x_i)-u_{j_{n+1}}(x)=\cD(x,x_i)=\left|\int_{\pi_{j_{n+1}}^{-1}(x_i)}^{\pi_{j_{n+1}}^{-1}(x)}\f1{\eta_{j_{n+1}}(s)}ds\right|\,.
\]
Recalling \eqref{eq:orientedderivative}, the latter gives us easily
\[
-D_{j_{n+1}}\phi(x_i)\ge\f1{\eta_{j_{n+1}}(x_i)}\,,
\]
and \eqref{supvertex} follows.

The proof that $d(x)=\min_{y\in\partial \cN}\cD(x,y)$ is a viscosity solution of \eqref{eik} is quite standard. Indeed, $d$ is a supersolution since it is the minimum of the finite number of supersolutions $\cD(\cdot, y)$, $y\in \partial \cN$.   Moreover, it is a subsolution since it is the maximum of a family of Lipschitz continuous subsolution and therefore we can apply the Perron method (see \cite[Thm. 2.12]{b}).  Finally, to prove that $d$ is a supersolution at the transition vertices $x_i$,  let $\phi$ be a test function such that $(d-\phi)$ has a local minimum at $x_i$ and $\overline y\in \partial \cN$ such that $d(x_i)=\cD(x_i,\overline y)$. Hence, for $x$ in a neighborhood of $x_i$ we have
\[
d(x_i)-\phi(x_i)=\cD(x_i,\overline y)-\phi(x_i)\le d(x)-\phi(x)\le \cD(x,\overline y)-\phi(x)
\]
and therefore $\cD(\cdot,\overline y)-\phi(\cdot)$ has a minimum at $x_i$. Following the arguments above, we get \eqref{supvertex}. Since the uniqueness of the solution of \eqref{eik} with homogeneous Dirichlet boundary condition is a consequence of the comparison theorem in \cite{ls}, the proof is complete.
\end{Proofc}
%%%%%%%%%%%%%%%%%%%%
\begin{acknowledgment}
 This research was initiated while the third author was visiting the   Department of Basic and Applied Sciences for Engineering at ``Sapienza" University of Rome. The third author acknowledges the support of the french ``ANR blanche'' project Kibord : ANR-13-BS01-0004  and of ``Progetto Gnampa 2015: Network e controllabilit\`a".
\end{acknowledgment}
%%%%%%%%%%%%%%%%%%%%%%%%%
%   References         %
%%%%%%%%%%%%%%%%%%%%%%%%

\end{document}